\newtheorem{theorem}{\bf Theorem}[section]
\newtheorem{proposition}[theorem]{\bf Proposition}
\newtheorem{corollary}[theorem]{\bf Corollary}
\newtheorem{lemma}[theorem]{\bf Lemma}
\newtheorem{notation}[theorem]{\bf Notation}
\newtheorem{question}[theorem]{\bf Question}
\newtheorem{conjecture}[theorem]{\bf Conjecture}
\newtheorem{remark}[theorem]{\bf Remark}
\newtheorem{definition}[theorem]{\bf Definition}
\newcommand{\overbar}[1]{\mkern 1.5mu\overline{\mkern-1.5mu#1\mkern-1.5mu}\mkern 1.5mu}
\newenvironment{psmallmatrix}
 {\left(\begin{smallmatrix}}
 {\end{smallmatrix}\right)}
\newcommand{\z}{\mathbb Z_+}
\newcommand{\hl }{\mathcal{H}}
\newcommand{{\ran}}{\mbox{\rm ran}~}
\newcommand{\hll}{{\mathcal H}^{(\lambda)}}
\newcommand{\hlm}{{\mathcal H}^{(\mu)}}
\newcommand{\jetK}{J_n(K_1,K_2)_{|\rm res\, \Delta}}
\newcommand{\mb}{\mbox{\rm M\"ob}}
\newcommand{\ft}{\varphi_{\theta, a}}
\newcommand{\hk}{(\mathcal H, K)}
\newcommand{\vphi}{\varphi}
\newcommand{\Cphn}{C_{\varphi^{-1}}}
\newcommand {\D} {\mathbb{D}}
\numberwithin{equation}{section}
\begin{document}



\title [Orbit of a bounded operator 
under the M\"{o}bius group modulo similarity ]{The orbit of a bounded operator 
under the M\"{o}bius group modulo similarity equivalence}
\author[S. Ghara]{Soumitra Ghara}
\address[S. Ghara]{Department of Mathematics, Indian Institute of Science,
Bangalore 560012, India} \email{ghara90@gmail.com}


\thanks{This work was supported by CSIR SPM Fellowship  (Ref. No. SPM-07/079(0242)/2016-EMR-I). The results in this paper are from the PhD thesis of the author submitted to Indian Institute of Science.}

\date{}
\begin{abstract}
Let M\"{o}b denote the group of biholomorphic automorphisms of the unit disc and  $(\mbox{M\"{o}b} \cdot T)$ be the orbit of a Hilbert space operator $T$ under the action of $\mb$.
If the quotient $(\mbox{M\"{o}b} \cdot T)/\sim$, where $\sim$ is the similarity between two operators is a singleton, then the operator $T$ is said to be weakly homogeneous. 
In this paper, we obtain a criterion to determine if the operator $M_z$ of multiplication by the coordinate function $z$  on a reproducing 
kernel Hilbert space  is weakly homogeneous. 
We use this to show that 
there exists a M\"{o}bius bounded weakly homogeneous 
operator which is not similar to 
any homogeneous operator, answering a question of Bagchi and Misra 
in the negative. Some necessary conditions for  the M\"obius boundedness of a weighted shift are also obtained. As a consequence, it is shown that the Dirichlet shift is not 
M\"{o}bius bounded. 
\end{abstract}

 \renewcommand{\thefootnote}{}
\footnote{2010 \emph{Mathematics Subject Classification}: 
47B32, 47B33, 47B37, 30J.}
%
%
%
\footnote{\emph{Key words and phrases}: Cowen-Douglas class, 
 Positive definite kernels, M\"obius bounded operators, homogeneous operators, weighted composition operators, jet construction.}

 \maketitle

\section{Introduction}
Let $\mathcal H$ be a complex separable Hilbert space, and let $B(\mathcal H)$ denote the space of bounded linear operators on $\mathcal H$. 
Let M\"ob denote the M\"obius group $ \{\varphi_{\theta,a}: \theta \in [0,2\pi),\, a \in \mathbb D \}$ of all biholomorphic automorphisms  of the 
unit disc $\mathbb D$, where  
$\varphi_{\theta, a}(z) = e^{i \theta} \tfrac{z-a}{1-\overbar{a}z},\;\;z\in \mathbb D.$ Clearly, it is a topological group with the topology induced by $\mathbb T\times \D$.

For an operator $T \in B(\mathcal H)$ with $\sigma(T)\subseteq \bar {\mathbb D}$, define the operator $\varphi(T)$, $\varphi \in \mb$, using  the functional calculus valid for  functions holomorphic in a neighbourhood of $\sigma(T)$. An operator $T$ with $\sigma(T)\subseteq \bar {\mathbb D}$ is said to be homogeneous  if $\varphi(T)$ is unitarily equivalent to $T$ for all $\varphi\in \mb$.  These operators have been studied extensively in the recent years (\cite{constantchar, onhomocontraction, homogeneoussurvey, homoshift}).
It follows from the spectral mapping theorem that the spectrum of a homogeneous operator is invariant under the action of M\"{o}b,
and therefore, it is either the unit circle $\mathbb T$ or the closed unit disc $\bar{\D}$. In this paper, we study a much larger class of operators, namely, the ones that are weakly homogeneous (cf. \cite{homogeneoussurvey}, \cite{onhomocontraction}). 
\begin{definition}
An operator $T \in B(\mathcal H)$ is said to be weakly 
homogeneous if $\sigma(T)\subseteq \bar {\mathbb D}$ and $\varphi(T)$ is similar to $T$ 
for all $\varphi$ in $\mbox{\rm {M\"ob}}$. 
\end{definition}


For two operators $T_1$ and $T_2$ in $B(\mathcal H)$, we write $T_1\sim T_2$ if there exists an invertible linear operator $L$ such that $L T_1 L^{-1} = T_2$. Clearly, $\sim$ defines an equivalence relation on $B(\mathcal H)$. Weakly homogeneous operators can also be defined in the following alternative way. An operator $T$ in $B(\mathcal H)$
with $\sigma(T)\subseteq \bar {\mathbb D}$ is weakly homogeneous if the quotient space $(\mb \cdot T)/\sim$ is a singleton, where $\mb \cdot T$ is the orbit of the operator $T$ under the natural action of the M\"{o}bius group. Replacing the invertible operator by a unitary we get a different equivalence relation which gives rise to homogeneous operators.

As in the case of a homogeneous operator, the spectrum of a weakly homogeneous operator is also either $\mathbb T$ or $\overbar{\D}$.  It is easy to see that an operator $T$ is weakly homogeneous if and only if  $T^*$ is weakly homogeneous.
Since two normal operators are similar if and only if they are unitarily equivalent, it follows that a normal operator $N$ is homogeneous if and only if it is weakly homogeneous.

It is not hard to verify that an operator $T$ which is similar to a homogeneous operator is weakly homogeneous. Indeed, if 
$T=XSX^{-1}$ for some homogeneous operator $S$ and an invertible  operator $X$, 
then we have
$\varphi(T)=X\varphi(S)X^{-1}$. Since $S$ is homogeneous, 
$\varphi(S)=U_\varphi S U_{\varphi}^*$ for some unitary operator $U_{\varphi}$. Hence 
\begin{equation}\label{eqnsimtohomo}
\varphi(T)=(XU_\varphi) S (XU_\varphi)^{-1}=(XU_\varphi X^{-1})T(XU_\varphi X^{-1})^{-1}.
\end{equation}
is weakly homogeneous. 
Hence every operator which is similar to a homogeneous operator is weakly homogeneous. The converse of this statement is not true, that is, a weakly homogeneous operator need not be similar to any homogeneous operator. To see this, it would be useful to recall the definition of a M\"obius bounded operator.

M\"obius bounded operators were introduced in \cite{Shields} by Shields. An operator $T$ on a Banach space $\mathcal B$ is said to be M\"obius bounded if $\sigma(T)\subseteq \overbar{\mathbb D}$ and $\sup_{\varphi\in \;\mbox{\rm M\"ob}}\|\varphi(T)\|$ is finite. We will only discuss M\"obius bounded operators on Hilbert spaces. By the von Neumann's inequality, every contraction on a Hilbert space is M\"obius bounded. Also, if $T$ is an operator which is similar to a  homogeneous operator, then by \eqref{eqnsimtohomo} it is easily verified that $T$ is M\"obius bounded. In \cite{homogeneoussurvey}, the existence of a weakly homogeneous operator which is not M\"obius bounded was given. Hence it cannot be similar to any homogeneous operator. In the same paper, the following question was raised.

\begin{question}[{ \cite[Question 10]{homogeneoussurvey}}]
\label{quesweakhomo}
Is it true that every M\"obius bounded weakly homogeneous operator is similar to a homogeneous operator?
\end{question}

One of the main results of this paper is Theorem \ref{thmexample} which gives a family of M\"obius bounded weakly homogeneous operators in $B_1(\D)$ which are not similar to any homogeneous operator.

Now we recall some basic properties of reproducing kernel Hilbert spaces.
Let $\Omega\subset \mathbb C$ be a bounded domain, and let 
$M_n(\mathbb{C})$, $n\geq 1$, denote the space of all $n\times n$ complex matrices. A function $ K:\Omega \times \Omega\to \mathcal M_n(\mathbb{C})$ 
is said to be a non-negative definite kernel  
if for any subset $\{w_1,\ldots,w_l\}$ of $\Omega$,  
the $l \times l$ block matrix $\big(K(w_i,w_j)\big)_{i,j=1}^{l}$ is non-negative definite, that is,
$\sum_{i,j=1}^l\left\langle K(w_i,w_j)\eta_j,\eta_i\right\rangle \geq 0$
for all $\eta_1,\ldots,\eta_l\in \mathbb C^n.$ 
If, in addition, $\big(K(w_i,w_j)\big)_{i,j=1}^{n}$
is also invertible, we say that $K$ is a positive definite kernel.
We always assume that  $K(z,w)$ is sesqui-analytic, that is, it is holomorphic in $z$ and anti-holomorphic in $w$. If $K:\Omega\times \Omega\to{\mathcal M}_n(\mathbb{C})$ is a non-negative definite kernel, then there exists a unique Hilbert space $\hl$ consisting of $\mathbb C^n$
valued holomorphic functions on $\Omega$ such that for all $w$  in $\Omega$ and $\eta$ in ${\mathbb C}^n$, the function $K(\cdot,w)\eta$ is in $\mathcal H $, where $K(\cdot,w)\eta z=K(z,w)\eta$, $z\in \Omega$, and 
for all $f$ in $\mathcal{H}$,
$\left \langle f, K(\cdot,w)\eta\right\rangle_\mathcal{H}=\left\langle f(w),\eta\right\rangle_{\mathbb C^n}$. 
We let $(\mathcal H, K)$ denote the unique reproducing kernel Hilbert space $\mathcal H$ determined by the non-negative definite kernel $K$. 
We refer to \cite{Aro}, \cite{pick-int} and \cite{PaulsenRaghupati} for the relationship between non-negative definite kernels and Hilbert spaces with the reproducing property as above.

We now discuss an important class of operators introduced by Cowen and Douglas in the very influential paper \cite{CD}. Let  $\Omega\subset \mathbb C$ be a bounded domain, and let $k$ be a positive integer. A bounded operator $T$ is said to be in the Cowen-Douglas class $B_k(\Omega)$ if $T$ satisfies the following requirements: 
\begin{itemize}
\item[\rm(i)] 
\rm dim $\ker (T-w) = k,~~w\in\Omega$
\item[\rm(ii)]
$\ran (T-w)$ is closed for all $w\in\Omega$
\item[\rm(iii)]
$\overbar \bigvee \big\{\ker (T-w): w\in \Omega \big\}=\mathcal H.$ 
\end{itemize} 

Every $\boldsymbol T\in B_k(\Omega)$ corresponds to a rank $k$
holomorphic hermitian vector bundle $E_{\boldsymbol T}$ defined by 
$$E_{\boldsymbol T}=\{(w,x)\in \Omega\times \mathcal H:x\in \ker (T-w)\}$$ 
and $\pi(w,x)=w$, $(w,x)\in E_{\boldsymbol T}$. 
It is known that if $T$ is an operator in $B_k(\Omega)$,
then $T$ is unitarily equivalent to the adjoint $M_z^*$ of the multiplication operator by the coordinate function $z$ on some 
reproducing kernel Hilbert space $(\mathcal H, K)$ consisting of $\mathbb C^k$ valued holomorphic functions on $\Omega^*$, 
where $\Omega^*=\{z:\bar{z}\in \Omega\}.$  

For $\lambda>0$, let $K^{(\lambda)}$ denote the positive definite kernel $(1-z\bar{w})^{-\lambda}$ on $\D\times \D$, and
whenever $K$ is equal to $K^{(\lambda)}$, we write $\hll$ instead of $(\hl, K^{(\lambda)})$.
It known that the adjoint $M_z^*$ of the multiplication operator by the coordinate function $z$ on $\hll$, $\lambda>0$, is homogeneous and  
 upto unitary equivalence, every homogeneous operator in $B_1(\mathbb D)$ is of this form,
 see \cite{curvandbackwardshift}.

Now we come back to the discussion on M\"{o}bius bounded operators. Recall that an operator $T$ on a Hilbert space $\hl$ is said to be power bounded if $\sup_{n\geq 0}\|T^n\| < \infty$. In \cite{Shields}, it was shown that every power bounded operator is M\"{o}bius bounded.  An example of an operator on a Banach space which is M\"{o}bius bounded but not power bounded was also given in that paper.  
The multiplication operator $M_z$ on the Hilbert space $(\hl , K^{(\lambda)}),$ $0<\lambda < 1,$ is homogeneous, therefore, M\"{o}bius bounded, however, it is not power bounded. This was noted in \cite{homogeneoussurvey}.  
Although a M\"{o}bius bounded operator need not be power bounded, Shields proved that if $T$ is a 
M\"{o}bius bounded operator on a Banach space, then there exists a $c>0$ such that  $\|T^n\|\leq c (n+1)$ for all $n\in \z$. But for operators on Hilbert spaces, he made the following conjecture.

\begin{conjecture}[Shields,\cite{Mobiusbounded}]
If $T$ is a M\"obius bounded operator on a Hilbert space, then there exists a constant $c>0$ such that $\|T^n\|\leq c(n+1)^{\frac{1}{2}}$ for all $n\in \z$.
\end{conjecture}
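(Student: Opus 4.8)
The plan is to recover the powers $T^n$ from the single one-parameter family $\varphi_{0,a}(z)=\tfrac{z-a}{1-\overbar a z}$, $a\in\D$, whose images $\|\varphi_{0,a}(T)\|$ are uniformly bounded, say by $M$, by M\"obius boundedness. Expanding the symbol gives the power series
\[
\varphi_{0,a}(z)=-a+(1-|a|^2)\sum_{k\geq 1}\overbar a^{\,k-1}z^k,
\]
so that, applying the functional calculus, $(1-|a|^2)\sum_{k\geq1}\overbar a^{\,k-1}T^k=\varphi_{0,a}(T)+aI$ has norm at most $M+1$. Hence
\[
\Big\|\sum_{k\geq1}\overbar a^{\,k-1}T^k\Big\|\leq\frac{M+1}{1-|a|^2},\qquad a\in\D .
\]
First I would isolate $T^n$ from this generating operator by a Cauchy/Fourier coefficient extraction: integrating over the circle $|a|=r$ against $e^{i(n-1)\arg a}$ picks out $r^{n-1}T^n$, whence $\|T^n\|\leq (M+1)\big/\big(r^{n-1}(1-r^2)\big)$. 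Optimizing over $r$ (the maximum of $r^{n-1}(1-r^2)$ occurs at $r^2=\tfrac{n-1}{n+1}$) gives only the linear bound $\|T^n\|\leq c(n+1)$, i.e.\ Shields' theorem.

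The main obstacle is the passage from this linear estimate to the conjectured $\sqrt{n}$ bound. The linear bound is in fact the best that the family $\{\varphi_{0,a}\}$ can yield on its own: the inequality above is saturated by coefficient data with $\|T^n\|\asymp n$, so any improvement must exploit additional structure. The natural candidate is an $L^2$ gain. Fixing unit vectors $x,y$ and applying Parseval upgrades the pointwise estimate to
\[
\sum_{k\geq1} r^{2(k-1)}\,|\langle T^k x,y\rangle|^2\leq\Big(\frac{M+1}{1-r^2}\Big)^2,\qquad 0<r<1 .
\]
This encodes genuine $\ell^2$ control of the matrix coefficients, but it cannot be localized to a single power: for a fixed rank-one functional all the mass may concentrate on $T^n$, and discarding the remaining nonnegative terms returns the linear bound. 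The same difficulty defeats a weighted Cauchy--Schwarz extraction over the disc: with a boundary weight $\rho(a)=(1-|a|^2)^{-\beta}$ one can control $\int_\D |a|^{2(n-1)}\rho(a)^{-1}\,dA(a)$, but the only available bound on $\int_\D |\langle\varphi_{0,a}(T)x,y\rangle|^2\rho(a)\,dA(a)$ is the lossy pointwise one $M^2\!\int_\D\rho\,dA$, and the resulting estimate degrades to $n^{(3-\beta)/2}\geq n$. Thus closing the gap appears to require an area-function (Littlewood--Paley-type) square-function bound for $a\mapsto\varphi_{0,a}(T)$ that does \emph{not} follow from M\"obius boundedness alone; this is where I expect the real difficulty to lie.

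Because the obstruction is precisely this gap, I would test the conjecture against the explicitly computable class of weighted shifts, where $\|S^n\|=\sup_k|w_k w_{k+1}\cdots w_{k+n-1}|$, using the necessary conditions for M\"obius boundedness of a shift developed in this paper. Either those conditions force $\sup_k|w_k\cdots w_{k+n-1}|\lesssim\sqrt n$ --- which, together with a reduction of the general case to shift-type data, would prove the conjecture --- or they admit a shift whose power norms grow faster than $\sqrt n$, refuting it. Since the same machinery already shows the Dirichlet shift is not M\"obius bounded, I would expect the weighted-shift analysis to be decisive, and I would not be surprised if it yielded a counterexample rather than a proof of the exponent $\tfrac12$.
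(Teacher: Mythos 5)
You should first be aware that this statement is not a theorem of the paper at all: it is Shields' conjecture, which the paper records as an open problem and verifies only in special cases, namely for multiplication operators $M_z$ on kernels $K(z,w)=\sum_n b_n(z\bar w)^n$ with $\{b_n\}$ decreasing (the theorem in Section 4) and, as a consequence, for quasi-homogeneous operators. So there is no proof of the general statement in the paper to compare against, and your proposal --- which you candidly do not bring to a conclusion --- cannot be faulted for failing to close a gap that remains open. What would be a genuine error is presenting the proposal as a proof; it is an obstruction analysis plus a program.

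That said, your computations are correct as far as they go, and your suggested way forward is essentially the route the paper takes for its partial result. The expansion of $\varphi_{0,a}$, the estimate $\bigl\|\sum_{k\ge 1}\bar a^{\,k-1}T^k\bigr\|\le (M+1)/(1-|a|^2)$, and the Cauchy-coefficient extraction giving the linear bound are all sound (this recovers Shields' theorem), and your diagnosis that the pointwise generating-function estimate alone cannot beat the linear bound is apt. The paper's Section 4 theorem carries out exactly your ``weighted shift test'': applying $\varphi_{\theta,a}(M_z)$ to the normalized monomials $z^j/\|z^j\|$ gives $\sum_{n} c_{n,j}x^n\le c(1-x)^{-2}$ with $c_{n,j}=\|z^{n+j+1}\|^2/\|z^j\|^2$, which is your estimate written in coordinates. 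The one idea you were missing is a monotonicity trick: when $b_n$ is decreasing, $c_{n,j}$ is increasing in $n$, so multiplying the inequality by $(1-x)$ produces the series $c_{0,j}+\sum_{n\ge 1}(c_{n,j}-c_{n-1,j})x^n$ with nonnegative coefficients dominated by $c(1-x)^{-1}$; the Tauberian lemma of the paper (partial sums are $O\bigl((n+1)^{\alpha}\bigr)$ when the generating function is $O\bigl((1-x)^{-\alpha}\bigr)$) then applies with exponent $1$ instead of $2$, the partial sums telescope to $c_{n,j}$, and one gets $c_{n,j}\le c'(n+1)$, i.e. $\|M_z^{n+1}\|^2\le c'(n+1)$ --- precisely the conjectured square-root bound for this class. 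In particular, among shifts with monotone weights your hoped-for counterexample cannot exist; whether the conjecture holds for general Hilbert space operators, or fails for some non-monotone shift, is left open by the paper.
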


This conjecture is verified for the class of quasi-homogeneous operators introduced recently in the paper \cite{quasihomogeneous}.

The paper is organized as follows. In section 2,  we obtain a necessary and sufficient condition to determine if the multiplication operator on a reproducing kernel Hilbert space 
$(\hl, K)$ is weakly homogeneous. We use this to construct new weakly homogeneous operators starting from a pair of weakly homogeneous operators. In section 3, we study weakly homogeneous operators in a the class $\mathcal FB_2(\D) \subseteq B_2(\D)$. M\"obius bounded operators are discussed in section 4. Finally, in section 5, we show that there exists a continuum of weakly homogeneous operators which are not similar to any homogeneous operator.
\section{Jet construction, weighted composition operators and weak homogeneity}\label{secjetweak}
Throughout this section, we assume that $\Omega\subset \mathbb C$ is a bounded domain. Let ${\rm Hol}(\Omega)$ denote the space of all scalar valued holomorphic functions on $\Omega$, and let ${\rm Hol}(\Omega, \tilde{\Omega})$ denote the space of all holomorphic functions on $\Omega$ taking values in  $\tilde{\Omega}$, where 
$\tilde{\Omega}\subset \mathbb C^r$, $r\geq 1$.


%
Let $\psi\in {\rm Hol}(\Omega, \mathcal M_n(\mathbb C))$. 
Let $M_{\psi}$ denote the linear map on ${\rm Hol}(\Omega, \mathbb C^n)$ defined by point-wise multiplication:
$$(M_{\psi}f)(\cdot)=\psi(\cdot)f(\cdot),f\in {\rm Hol}(\Omega, \mathbb C^n).$$
For a holomorphic self map $\varphi$ of $\Omega$, let 
$C_{\varphi}$ denote the linear map on ${\rm Hol}(\Omega, \mathbb C^n)$ defined by composition:
$$(C_\varphi f)(\cdot)=(f\circ\varphi)(\cdot), f\in {\rm Hol}(\Omega, \mathbb C^n).$$ If $K:\Omega\times\Omega\to\mathcal M_n(\mathbb C)$ is a non-negative definite kernel, then, in general, neither $M_{\psi}$  nor $C_\varphi$ maps $(\mathcal H, K)$ into  $(\mathcal H, K)$. However, by the Closed graph theorem, 
whenever either one (or both) of them maps $(\mathcal H, K)$ into  $(\mathcal H, K)$, then it is (they are)  bounded. Whenever the map $M_{\psi} C_{\varphi}$ is 
bounded on $(\mathcal H, K)$, it is called a weighted composition operator on 
$(\mathcal H, K)$.  For $w\in \Omega$, $\eta\in \mathbb C^n$ and $h\in (\hl, K)$, we see that 
\begin{align*}
\left\langle (M_{\psi} C_{\varphi})^*K(\cdot,w)\eta, h\right\rangle
&=\left\langle K(\cdot,w)\eta, \psi(\cdot)h(\varphi(\cdot))\right\rangle\\
&=\left\langle \eta, \psi(w)h(\varphi(w))\right\rangle\\
&=\left\langle \psi(w)^*\eta, h(\varphi(w))\right\rangle\\
&=\left\langle K\big(\cdot,\varphi(w)\big)\big(\psi(w)^*\eta\big), h\right\rangle.
\end{align*}
Therefore
\begin{equation}\label{eqnwtcomkernelfns}
(M_{\psi} C_{\varphi})^*K(\cdot,w)\eta=K(\cdot,\varphi(w))(\psi(w)^*\eta),\;\;w\in \Omega,\eta\in \mathbb C^n.
\end{equation}
 
We now  recall the jet construction. Suppose that $K_1, K_2:\Omega\times\Omega\to \mathbb C$ are two non-negative definite kernels. Then  $(\hl, K_1)\otimes(\hl, K_2)$ is a reproducing kernel Hilbert space with the reproducing kernel $K_1\otimes K_2$, where $K_1\otimes K_2:(\Omega\times \Omega)\times(\Omega\times \Omega)\to \mathbb C$ is given by 
$$(K_1\otimes K_2)(z,\zeta;w,\rho)=K_1(z,w)K_2(\zeta,\rho),\; \; z,\zeta,w,\rho\in \Omega.$$ 
We also make the standing assumption that the multiplication operator  
$M_z$  on $(\mathcal H, K_1)$ as well as on $(\mathcal H, K_2)$ is bounded. For $n\in \z$, let $\mathcal A_n$ be the subspace of $(\hl, K_1)\otimes (\hl, K_2)$ given by 
\begin{equation}\label{eqnwtcompadj}
\mathcal A_n:=\big \{f\in (\mathcal H, K_1)\otimes (\mathcal H, K_2): \big(\big(\tfrac{\partial}{\partial \zeta}\big)^i f(z,\zeta)\big)_{|\Delta}=0,\;0
\leq i\leq n\big\},
\end{equation}
where $\Delta$ is the diagonal set $\{(z,z):z\in \Omega\}$.
Let $J_n:(\hl, K_1)\otimes (\hl, K_2)\to {\rm Hol}(\Omega\times \Omega, \mathbb C^{n+1})$ be the map given by the following formula
$$(J_nf)(z,\zeta)=\sum_{i=0}^n\big(\tfrac{\partial}{\partial \zeta}\big)^{i} 
f(z,\zeta)\otimes e_{i},~f\in (\mathcal H ,K_1)\otimes (\mathcal H, K_2),$$
where $\big \{e_{i}\big \}_{i=0}^n$
is the standard orthonormal basis of $\mathbb C^{n+1}$. Also let $R:\ran J_n\to \rm Hol(\Omega, \mathbb{C}^{n+1})$ be the restriction map, that is, $R(\mathbf{h})=\mathbf{h}_{|\Delta},~\mathbf {h}\in \ran J_n$.

Clearly, $\ker R J_n=\mathcal A_n$. Hence the map $R J_n: \mathcal A_n^\perp \to \rm Hol(\Omega, \mathbb{C}^{n+1})$ is one to one. Therefore we can give a natural inner product on $ \ran {R J_n} $, namely, 
\begin{equation}\label{eqninnp}
\langle RJ_n(f),R J_n(g)\rangle=\langle P_{\mathcal A_n^\perp}f,P_{\mathcal A_n^\perp}g\rangle,~f,g\in (\hl,K_1)\otimes (\hl,K_2).
\end{equation}
In what follows, we think of $\ran {R J_n}$ as a Hilbert space equipped with this inner product. From \eqref{eqninnp}, it is clear that the map ${R J_n}_{|\mathcal A_n^\perp}:\mathcal A_n^\perp\to \ran {R J_n}$ is unitary. 
The theorem stated below is a straightforward generalization of one of the main results from \cite{D-M-V}. 
\begin{theorem}$(${\rm cf. {\cite [Proposition 2.3]{D-M-V}}}$)$\label{thmjetcons}
Let $K_1,K_2:\Omega\times\Omega\to \mathbb C$ be  two non-negative definite kernels. Then $\ran RJ_n$ is a reproducing kernel Hilbert space and its  reproducing kernel $J_n(K_1,K_2)_{|\rm res \, \Delta}$ is given by the formula 

$$J_n(K_1,K_2)_{|\rm res \, \Delta}(z,w):=\Big(K_1(z,w)\big(\tfrac{\partial}{\partial z}\big)^{i}\big(\tfrac{\partial}{\partial \bar{w}}\big)^{j} K_2(z,w)\Big)_{ i, j=0}^{n},\;\;z,w\in \Omega.$$
Moreover, the multiplication operator $M_z$ on $(\hl, J_n(K_1,K_2)_{|\rm res \, \Delta})$  is unitarily equivalent to the operator 
$P_{\mathcal A_n^\perp}(M_z\otimes I)_{|\mathcal A_n^\perp}$ via the unitary ${RJ_n}_{|\mathcal A_n^\perp}$. 
\end{theorem}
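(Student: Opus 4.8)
The plan is to establish two things: first, that $\ran RJ_n$ is a reproducing kernel Hilbert space with the claimed kernel, and second, that $M_z$ on this space is unitarily equivalent to the compression $P_{\mathcal A_n^\perp}(M_z \otimes I)_{|\mathcal A_n^\perp}$. Since the excerpt has already shown that $RJ_n{}_{|\mathcal A_n^\perp}$ is unitary from $\mathcal A_n^\perp$ onto $\ran RJ_n$ (equipped with the transported inner product from \eqref{eqninnp}), the second assertion reduces almost entirely to a bookkeeping verification that this unitary intertwines the two copies of $M_z$. The main analytic content is therefore the identification of the reproducing kernel.

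For the kernel, the natural strategy is to push the reproducing kernel $K_1\otimes K_2$ of the tensor product space through the map $RJ_n$ and compute. Concretely, I would fix $w \in \Omega$ and $\eta \in \mathbb C^{n+1}$ and look for the element $F_{w,\eta}\in \ran RJ_n$ that reproduces point evaluation, i.e. $\langle RJ_n(g), F_{w,\eta}\rangle = \langle (RJ_n g)(w), \eta\rangle_{\mathbb C^{n+1}}$ for all $g$. Using the unitarity in \eqref{eqninnp}, the left side equals $\langle P_{\mathcal A_n^\perp}g, P_{\mathcal A_n^\perp}h_{w,\eta}\rangle$ where $h_{w,\eta}\in \mathcal A_n^\perp$ is the preimage of $F_{w,\eta}$, while the right side, written out componentwise via the definition of $J_n$, is a sum over $i$ of terms $\langle (\tfrac{\partial}{\partial\zeta})^i g(z,\zeta)_{|\Delta}(w), \eta_i\rangle$. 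The point is to recognize that each such evaluation-of-a-derivative-on-the-diagonal is itself given by pairing $g$ against an explicit element of $(\hl,K_1)\otimes(\hl,K_2)$ built from $K_1(\cdot,w)$ and the derivatives $(\tfrac{\partial}{\partial\bar\rho})^i K_2(\cdot,\rho)_{|\rho=w}$. Assembling these and applying $RJ_n$ to the resulting kernel section should produce exactly the matrix $\big(K_1(z,w)(\tfrac{\partial}{\partial z})^i(\tfrac{\partial}{\partial\bar w})^j K_2(z,w)\big)_{i,j=0}^n$.

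The cleanest route, and the one I would actually follow, is to invoke the cited result \cite[Proposition 2.3]{D-M-V} as the scalar template and simply check that the argument is insensitive to replacing their specific kernels by arbitrary non-negative definite $K_1,K_2$; since this is stated as a ``straightforward generalization,'' the task is to verify that no special feature of the original kernels was used beyond sesqui-analyticity and the boundedness of $M_z$, which we have assumed. For the intertwining statement, I would note that $M_z \otimes I$ need not leave $\mathcal A_n^\perp$ invariant, so one works with the compression $P_{\mathcal A_n^\perp}(M_z\otimes I)_{|\mathcal A_n^\perp}$; the key identity to confirm is that $RJ_n$ commutes with multiplication by $z$ in the appropriate sense, namely $RJ_n\big((M_z\otimes I)f\big) = M_z\big(RJ_n f\big)$ for $f \in \mathcal A_n^\perp$, which follows because $z$ is constant along the restriction to $\Delta$ and differentiation in $\zeta$ commutes with multiplication by the first variable $z$. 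Combining this commutation with the already-established unitarity of $RJ_n{}_{|\mathcal A_n^\perp}$ yields the unitary equivalence.

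\emph{The main obstacle} I anticipate is the kernel computation's interaction with the projection $P_{\mathcal A_n^\perp}$: the inner product in \eqref{eqninnp} is defined through this projection, so when reproducing point evaluations one must be careful that the kernel sections $h_{w,\eta}$ genuinely lie in $\mathcal A_n^\perp$ (or, equivalently, track how $P_{\mathcal A_n^\perp}$ acts on the natural candidates built from derivatives of $K_2$). Handling the derivatives of the diagonal restriction correctly—ensuring that $(\tfrac{\partial}{\partial\zeta})^i$ applied before restriction matches $(\tfrac{\partial}{\partial\bar w})^j$ acting on $K_2(z,w)$ after the identification—is the delicate multi-index bookkeeping where sign and index conventions must be pinned down. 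Once that correspondence is fixed, the remainder is routine.
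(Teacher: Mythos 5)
Your proposal is correct and takes essentially the same route as the paper: the paper in fact offers no proof of this theorem, only the citation to \cite[Proposition 2.3]{D-M-V} with the remark that the generalization is straightforward, which is precisely the ``cleanest route'' you elect to follow. Your supplementary direct sketch is also sound --- the candidate reproducing elements $h_{w,\eta}=\sum_{j=0}^{n}\eta_j\,K_1(\cdot,w)\otimes\bar{\partial}^{j}K_2(\cdot,w)$ automatically lie in $\mathcal A_n^{\perp}$ (pairing them with $f\in\mathcal A_n$ produces exactly the $\zeta$-derivatives of $f$ on the diagonal, which vanish), applying $RJ_n$ to them yields the stated matrix kernel, and the identity $RJ_n\big((M_z\otimes I)f\big)=M_z\big(RJ_n f\big)$ together with $RJ_n P_{\mathcal A_n^{\perp}}=RJ_n$ gives the unitary equivalence.
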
 

For any $\psi\in \rm Hol(\Omega)$, let 
$\psi^{(i)}(z)$, $i\in \mathbb Z_+$, denote the $i$th derivative of $\psi$ at the point $z$, and let $(\mathcal J_n\psi)(z)$, $z\in \Omega$, denote the $(n+1)\times (n+1)$ lower triangular matrix given by 

\begin{equation*}
\begingroup
\renewcommand*{\arraystretch}{1.5}
(\mathcal J_n\psi)(z):=
\begin{pmatrix}
\psi(z)&0 & 0&\hdots&0\\
\tbinom{1}{0}\psi^{(1)}(z)&\psi(z)& 0&\hdots &0\\
\binom{2}{0}\psi^{(2)}(z)&\binom{2}{1}\psi^{(1)}(z)&\psi(z) & \hdots&0\\
\vdots&\vdots&\vdots &\vdots&\vdots\\
\binom{n}{0}\psi^{(n)}(z)&\hdots&\hdots&\tbinom{n}{n-1}\psi^{(1)}(z)&\psi(z)
\end{pmatrix}.
\endgroup
\end{equation*}

Recall that for  $f\in \mbox{\rm {Hol}}(\Omega)$ and $\varphi\in \mbox{\rm Hol}(\Omega, \Omega)$, the Fa\`a di Bruno's formula (cf. \cite[page 139]{DiBruno}) for the $i$th derivative of the composition function $f \circ \varphi$ is the following:
\begin{equation}\label{dibruno}
(f \circ \varphi)^{(i)}(z)=
\sum_{j=1}^i f^{(j)}\big(\varphi(z)\big)B_{i,j}
\big(\varphi^{(1)}(z),...,\varphi^{(i-j+1)}(z)\big), \;\;z\in \Omega,
\end{equation}
where $B_{i,j}(z_1,\ldots,z_{i-j+1})$, $i\geq j\geq 1$, are the Bell's polynomials. 
Furthermore, let $(\mathcal B_n\varphi)(z)$ denote the $(n+1)\times (n+1)$  lower triangular matrix of the form 
\renewcommand*{\arraystretch}{1.5}
\[
(\mathcal B_n\varphi)(z):=\left(
\begin{array}{c c}
1 & \boldsymbol 0 \\ 
\boldsymbol{0} & \big (\!\! \big ( B_{i,j}\big(\varphi^{(1)}(z),...,\varphi^{(i-j+1)}(z)\big) \big )\!\!\big)_{i,j=1}^n 
\end{array}
\right), z\in \Omega,
\]
where $B_{i,j}$, $1 \leq i < j \leq n$, is set to be $0$.  
\renewcommand*{\arraystretch}{1}
%
%
%

One of the main results of this subsection is the theorem below identifying the compression of the tensor product of two weighted composition operators 
with another weighted composition operator. 

\begin{theorem}\label{wtcompthm1}
Let $\psi_1,\psi_2\in \mbox{\rm Hol}(\Omega)$ and let  $\varphi \in \mbox{\rm Hol}(\Omega,\Omega)$. Suppose that the weighted  composition operators $ M_{\psi_1} C_{\varphi}$ on $ (\mathcal H, K_1) $ 
and $ M_{\psi_2} C_{\varphi} $ on $ (\mathcal H, K_2) $ are bounded.
Then  $P_{\mathcal A_n^\perp}
(M_{\psi_1} C_{\varphi}\otimes M_{\psi_2} C_{\varphi})
_{|\mathcal A_n^\perp}$ is unitarily equivalent to the operator 
$ M_{\psi_1 (\mathcal J_n\psi_2) (\mathcal B_n\varphi )}C_{\varphi} $ on $\big(\mathcal H , \jetK \big).$

In particular,
the operator $ M_{\psi_1(\mathcal J_n\psi_2)(\mathcal B_n\varphi )}C_{\varphi} $ is bounded on $\big(\;\mathcal H,J_n(K_1,K_2)_{|\rm res\, \Delta}\;\big)$ and 
$$
\| M_{\psi_1 (\mathcal J_n\psi_2) (\mathcal B_n\varphi )}C_{\varphi} \|
\leq \|M_{\psi_1} C_{\varphi}\|\|M_{\psi_2} C_{\varphi}\|.
$$
\end{theorem}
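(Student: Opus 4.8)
The plan is to transport the compression $T:=P_{\mathcal A_n^\perp}(M_{\psi_1}C_\varphi\otimes M_{\psi_2}C_\varphi)_{|\mathcal A_n^\perp}$ to the jet space via the unitary $U:={RJ_n}_{|\mathcal A_n^\perp}$ furnished by Theorem \ref{thmjetcons}, and to verify the asserted identity by testing the adjoints on the reproducing kernels of $\big(\mathcal H,\jetK\big)$. Writing $\mathbf K:=\jetK$ and $A:=M_{\psi_1}C_\varphi\otimes M_{\psi_2}C_\varphi$, I would first record the action of $U^*$ on a kernel function. Using the reproducing property of $(\mathcal H,K_1)\otimes(\mathcal H,K_2)$ together with the definitions of $J_n$ and $R$, for $f\in\mathcal A_n^\perp$ one has $\langle RJ_nf,\mathbf K(\cdot,w)\eta\rangle=\langle(RJ_nf)(w),\eta\rangle=\sum_{j=0}^n\overline{\eta_j}\,(\partial_\zeta^jf)(w,w)$, and since $(\partial_\zeta^jf)(w,w)=\big\langle f,\,K_1(\cdot,w)\otimes\partial_{\bar\rho}^jK_2(\cdot,\rho)_{|\rho=w}\big\rangle$, this identifies
\[ U^*\big(\mathbf K(\cdot,w)\eta\big)=g_{w,\eta}:=\sum_{j=0}^n\eta_j\,K_1(\cdot,w)\otimes\partial_{\bar\rho}^jK_2(\cdot,\rho)_{|\rho=w}. \]
A short computation with the reproducing property shows that each summand, and hence $g_{w,\eta}$ itself, already lies in $\mathcal A_n^\perp$, so no projection is needed here.

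Next I would prove the structural fact that $\mathcal A_n$ is invariant under $A$. Since $(Af)(z,\zeta)=\psi_1(z)\psi_2(\zeta)f(\varphi(z),\varphi(\zeta))$, differentiating in $\zeta$ by the Leibniz rule and by Fa\`a di Bruno's formula \eqref{dibruno} shows that every term of $\big(\partial_\zeta^i(Af)\big)_{|\Delta}$ carries a factor $(\partial_\zeta^jf)_{|\Delta}$ with $0\le j\le i\le n$, which vanishes for $f\in\mathcal A_n$. Hence $A\mathcal A_n\subseteq\mathcal A_n$, so $A^*$ leaves $\mathcal A_n^\perp$ invariant and the compression satisfies $T^*=A^*_{|\mathcal A_n^\perp}$.

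The heart of the matter is the computation of $A^*g_{w,\eta}$. Applying \eqref{eqnwtcomkernelfns} to each tensor factor gives $(M_{\psi_1}C_\varphi)^*K_1(\cdot,w)=\overline{\psi_1(w)}\,K_1(\cdot,\varphi(w))$ and $(M_{\psi_2}C_\varphi)^*\partial_{\bar\rho}^jK_2(\cdot,\rho)_{|\rho=w}=\partial_{\bar\rho}^j\big(\overline{\psi_2(\rho)}\,K_2(\cdot,\varphi(\rho))\big)_{|\rho=w}$; expanding the latter by the Leibniz rule in $\overline{\psi_2}$ and by \eqref{dibruno} applied to $\rho\mapsto K_2(\cdot,\varphi(\rho))$ shows that $A^*g_{w,\eta}$ is again of the form $g_{\varphi(w),\tilde\eta}$ for a transformed vector $\tilde\eta$. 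Collecting the coefficient of $K_1(\cdot,\varphi(w))\otimes\partial_{\bar\sigma}^lK_2(\cdot,\sigma)_{|\sigma=\varphi(w)}$, the binomial factors produced by the Leibniz rule assemble into the lower triangular matrix $(\mathcal J_n\psi_2)(w)$, the Bell polynomials produced by \eqref{dibruno} assemble into $(\mathcal B_n\varphi)(w)$, and together with the scalar $\psi_1(w)$ this yields $\tilde\eta=\big(\psi_1(\mathcal J_n\psi_2)(\mathcal B_n\varphi)\big)(w)^*\eta$. I expect this combinatorial matching --- checking that the two successive expansions factor precisely as the matrix product $\psi_1(\mathcal J_n\psi_2)(\mathcal B_n\varphi)$, with all conjugations correctly placed --- to be the main obstacle.

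Finally, put $\Psi:=\psi_1(\mathcal J_n\psi_2)(\mathcal B_n\varphi)$. Because $g_{\varphi(w),\Psi(w)^*\eta}\in\mathcal A_n^\perp$ and $U$ is the unitary of Theorem \ref{thmjetcons}, the preceding steps give
\[ UT^*U^*\big(\mathbf K(\cdot,w)\eta\big)=U\big(A^*g_{w,\eta}\big)=U\big(g_{\varphi(w),\Psi(w)^*\eta}\big)=\mathbf K\big(\cdot,\varphi(w)\big)\big(\Psi(w)^*\eta\big), \]
which by \eqref{eqnwtcomkernelfns} is exactly $(M_\Psi C_\varphi)^*\big(\mathbf K(\cdot,w)\eta\big)$. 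As finite linear combinations of the kernel functions are dense, this proves $UT^*U^*=(M_\Psi C_\varphi)^*$, i.e. $T$ is unitarily equivalent to $M_\Psi C_\varphi$ on $\big(\mathcal H,\jetK\big)$; in particular the latter is bounded. The norm estimate is then immediate, since a compression is norm non-increasing and the operator norm is multiplicative on tensor products: $\|M_\Psi C_\varphi\|=\|T\|\le\|A\|=\|M_{\psi_1}C_\varphi\|\,\|M_{\psi_2}C_\varphi\|$.
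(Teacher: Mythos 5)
Your proof is correct, but it runs on the opposite side of the duality from the paper's argument. The paper conjugates the compression by the unitary $U={RJ_n}_{|\mathcal A_n^\perp}$ and then verifies the identity \emph{on functions}: for an arbitrary $f\in\mathcal A_n^\perp$ it computes $RJ_n\big((M_{\psi_1}C_\varphi\otimes M_{\psi_2}C_\varphi)f\big)$ directly via Leibniz and Fa\`a di Bruno and matches it, entry by entry, with $M_{\psi_1(\mathcal J_n\psi_2)(\mathcal B_n\varphi)}C_\varphi$ applied to $RJ_nf$; since $\ker RJ_n=\mathcal A_n$, the projection $P_{\mathcal A_n^\perp}$ disappears for free and no density or adjoint computation is needed (the invariance of $\mathcal A_n$ then falls out as a byproduct --- it is Remark 2.3 in the paper, not an input). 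You instead verify the identity \emph{on adjoints and kernel functions}: you identify $U^*\mathbf K(\cdot,w)\eta$ with the vectors $g_{w,\eta}$, prove invariance of $\mathcal A_n$ first so that the compressed adjoint is genuinely the restriction $A^*_{|\mathcal A_n^\perp}$, and then push $g_{w,\eta}$ through $A^*$ using \eqref{eqnwtcomkernelfns} on each tensor factor. The combinatorial core --- Leibniz and Fa\`a di Bruno assembling into the matrix product $(\mathcal J_n\psi_2)(\mathcal B_n\varphi)$ --- is identical in both arguments; what your route buys is that it stays entirely within the kernel-function calculus natural to weighted composition operators, at the cost of extra foundational steps (the identification of $U^*$ on kernels, the interchange of the bounded adjoint with $\partial_{\bar\rho}^{\,j}$ applied to $\rho\mapsto K_2(\cdot,\rho)$, and a density argument at the end). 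Two small points of hygiene: your final display writes $(M_\Psi C_\varphi)^*$ before the boundedness of $M_\Psi C_\varphi$ is known, which is circular as stated; the clean formulation is to take adjoints and use the reproducing property to read off $\big\langle (UTU^*h)(w),\eta\big\rangle=\big\langle h,\mathbf K(\cdot,\varphi(w))\Psi(w)^*\eta\big\rangle=\big\langle \Psi(w)h(\varphi(w)),\eta\big\rangle$, which establishes simultaneously that $M_\Psi C_\varphi$ is everywhere defined, bounded, and equal to $UTU^*$. Likewise the interchange of $A^*$ with anti-holomorphic differentiation of the kernel deserves one sentence of justification (anti-holomorphicity of $\rho\mapsto K_2(\cdot,\rho)$ as a Hilbert-space-valued map, so bounded operators commute with its derivatives). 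Neither point is a gap.
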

Before, we give the proof of Theorem \ref{wtcompthm1}, we state a second theorem refining some of the statements in it.  
Let $\rm{Aut}(\Omega)$ denote the group of biholomorphic automorphisms of the domain $\Omega$. 

\begin{theorem}\label{wtcompthm2}
Let  $\psi_1,\psi_2\in \mbox{\rm Hol}(\Omega)$ and $\varphi \in \mbox{\rm Aut}(\Omega)$. 
\begin{itemize}
\item[\rm (i)]If the operators 
$M_{\psi_1}C_{\varphi}$ on $(\hl, K_1)$ and $M_{\psi_2}C_{\varphi}$ on $(\hl, K_2)$ are bounded and invertible, then so is the operator $ M_{\psi_1(\mathcal J_n\psi_2)(\mathcal B_n\varphi )}C_{\varphi} $ on $\big(\mathcal H,J_n(K_1,K_2)_{|\rm res\, \Delta}\big)$.
\item[\rm (ii)] 
If the operators $M_{\psi_1}C_{\varphi}$ on $(\hl, K_1)$ and $M_{\psi_2}C_{\varphi}$ on $(\hl, K_2)$ are unitary, then so is the operator $ M_{\psi_1(\mathcal J_n\psi_2)(\mathcal B_n\varphi )}C_{\varphi} $ on $\big(\mathcal H, J_n(K_1,K_2)_{|\rm res\, \Delta}\big)$.
\end{itemize}
\end{theorem}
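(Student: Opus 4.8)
The plan is to exploit the block-triangular structure that the automorphism hypothesis forces on the operator $M_{\psi_1} C_{\varphi}\otimes M_{\psi_2} C_{\varphi}$ relative to the splitting $(\hl,K_1)\otimes(\hl,K_2)=\mathcal A_n\oplus\mathcal A_n^\perp$, and then to transport invertibility (resp. unitarity) of the ambient operator to its compression via Theorem \ref{wtcompthm1}. Boundedness of the jet operator is already supplied by Theorem \ref{wtcompthm1}, so the content to be established is the invertibility in (i) and the unitarity in (ii).

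First I would prove the key invariance lemma: for $\varphi\in\mbox{\rm Aut}(\Omega)$ the subspace $\mathcal A_n$ of \eqref{eqnwtcompadj} is invariant under $M_{\psi_1} C_{\varphi}\otimes M_{\psi_2} C_{\varphi}$. The characterization of $\mathcal A_n$ is that $f$ vanishes to order $n+1$ along $\Delta$ in the $\zeta$-variable, i.e. $f(z,\zeta)=(\zeta-z)^{n+1}g(z,\zeta)$ for a holomorphic $g$, obtained from the Taylor expansion of $f$ in $\zeta$ about $\zeta=z$. Since $\varphi$ is biholomorphic, $\varphi(\zeta)-\varphi(z)=(\zeta-z)\,q(z,\zeta)$ with $q$ holomorphic and $q(z,z)=\varphi'(z)$ nowhere zero; hence $(M_{\psi_1} C_{\varphi}\otimes M_{\psi_2} C_{\varphi})f(z,\zeta)=\psi_1(z)\psi_2(\zeta)\big(\varphi(\zeta)-\varphi(z)\big)^{n+1}g\big(\varphi(z),\varphi(\zeta)\big)=(\zeta-z)^{n+1}\times(\text{holomorphic})$, which again lies in $\mathcal A_n$. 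The very same computation applies verbatim to $\varphi^{-1}\in\mbox{\rm Aut}(\Omega)$.

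Next I would record the inverse formula. A direct computation on holomorphic functions shows that for $\varphi\in\mbox{\rm Aut}(\Omega)$ the two-sided inverse of a bounded invertible $M_{\psi}C_{\varphi}$ is $M_{1/(\psi\circ\varphi^{-1})}C_{\varphi^{-1}}$, again a weighted composition operator, now with symbol $\varphi^{-1}$. Consequently $S:=M_{\psi_1} C_{\varphi}\otimes M_{\psi_2} C_{\varphi}$ has inverse $S^{-1}=M_{\tilde\psi_1}\Cphn\otimes M_{\tilde\psi_2}\Cphn$, and in the unitary case $S^{*}=S^{-1}$ is of the same form. By the invariance lemma applied to both $\varphi$ and $\varphi^{-1}$, both $S$ and $S^{-1}$ leave $\mathcal A_n$ invariant, so relative to $\mathcal A_n\oplus\mathcal A_n^\perp$ each is block upper-triangular. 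Matching the $(2,2)$ blocks in $SS^{-1}=S^{-1}S=I$ shows that the compression $P_{\mathcal A_n^\perp}S_{|\mathcal A_n^\perp}$ is invertible, which by Theorem \ref{wtcompthm1} proves (i). For (ii), unitarity of the two factors makes $S$ unitary with $S^{*}=S^{-1}$ of the above form, so $\mathcal A_n$ is now invariant under both $S$ and $S^{*}$, i.e. $\mathcal A_n$ reduces $S$; the compression then equals the genuine restriction $S_{|\mathcal A_n^\perp}$, which is unitary, and Theorem \ref{wtcompthm1} delivers (ii).

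The main obstacle is the invariance lemma, specifically the fact that an automorphism preserves the exact order of vanishing along the diagonal, which rests on $\varphi'\neq 0$; this is precisely where the hypothesis $\varphi\in\mbox{\rm Aut}(\Omega)$ (rather than a general self-map) is essential, and it is what upgrades the generic compression of Theorem \ref{wtcompthm1} into an honest block-triangular (resp. block-diagonal) decomposition. Once invariance is in hand, the passage from $S$ to its $(2,2)$ block is routine block-operator bookkeeping.
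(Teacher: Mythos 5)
Your proof is correct and is essentially the paper's own argument: the paper likewise uses the inverse formula $(M_{\psi_i}C_{\varphi})^{-1}=M_{1/(\psi_i\circ\varphi^{-1})}C_{\varphi^{-1}}$, the invariance of $\mathcal A_n$ under the tensor-product operator and under its inverse (Remark \ref{reminvarintsub}, which you reprove directly via the factorization $f(z,\zeta)=(\zeta-z)^{n+1}g(z,\zeta)$), a block-triangularity lemma (Lemma \ref{lemwtcomp}, which you reprove by matching the $(2,2)$ blocks of $SS^{-1}=S^{-1}S=I$), and finally Theorem \ref{wtcompthm1} to transfer invertibility, resp.\ unitarity, to the jet operator. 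One minor correction to your closing remark: the invariance of $\mathcal A_n$ under $M_{\psi_1}C_{\varphi}\otimes M_{\psi_2}C_{\varphi}$ requires no nonvanishing of $\varphi'$ and holds for an arbitrary holomorphic self-map (this is exactly Remark \ref{reminvarintsub}); the hypothesis $\varphi\in \mbox{\rm Aut}(\Omega)$ is needed only to ensure that $S^{-1}$ is again a weighted composition operator with holomorphic composition symbol $\varphi^{-1}$, so that $\mathcal A_n$ is invariant under $S^{-1}$ as well.
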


The following lemma is an essential tool in  the proof of Theorem \ref{wtcompthm2}. However, the straightforward proof is omitted.
\begin{lemma}\label{lemwtcomp}
Let $H$ be a Hilbert space, and let $X:H\to H$
be a bounded, invertible operator. Suppose that $H_0$ is a closed subspace of 
$ H$ which is invariant under both $X$ and $X^{-1}.$ Then the operators $X_{|H_0}$ and $P_{H_0^\perp}X_{|H_0^\perp}$ are invertible. Moreover, if $X$ is unitary, then
$H_0^\perp$ is also invariant under $X$, and 
the operators $X_{|H_0}$ and $X_{|H_0^\perp}$
are unitary.
\end{lemma}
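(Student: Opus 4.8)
The plan is to pass to the $2\times 2$ block-operator representation of $X$ induced by the orthogonal decomposition $H = H_0 \oplus H_0^\perp$, and to read off all four assertions from the triangular structure forced by the invariance hypotheses. Writing $A = X_{|H_0}$, $D = P_{H_0^\perp} X_{|H_0^\perp}$, and $B = P_{H_0} X_{|H_0^\perp}$, the invariance of $H_0$ under $X$ makes the $(2,1)$ block vanish, so that
$$X = \begin{pmatrix} A & B \\ 0 & D \end{pmatrix}.$$
Since $H_0$ is also invariant under $X^{-1}$, the same reasoning gives an upper-triangular form $X^{-1} = \begin{pmatrix} A' & B' \\ 0 & D' \end{pmatrix}$ with $A' = X^{-1}_{|H_0}$ and $D' = P_{H_0^\perp} X^{-1}_{|H_0^\perp}$.

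For the invertibility of $X_{|H_0} = A$, I would argue directly: both $X$ and $X^{-1}$ carry $H_0$ into itself, and for $h \in H_0$ one has $X X^{-1} h = X^{-1} X h = h$ with every intermediate vector staying in $H_0$; hence $X^{-1}_{|H_0}$ is a two-sided inverse of $A$ on $H_0$. The invertibility of the compression $D$ is the only point that is not completely immediate, and this is where the triangular structure does the work: multiplying out $X X^{-1} = I$ and $X^{-1} X = I$ in block form and comparing the $(2,2)$ entries yields $D D' = I_{H_0^\perp}$ and $D' D = I_{H_0^\perp}$, so that $D = P_{H_0^\perp} X_{|H_0^\perp}$ is invertible with inverse $P_{H_0^\perp} X^{-1}_{|H_0^\perp}$. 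I regard this step as the crux: it is precisely here that one must use the invariance of $H_0$ under $X$ \emph{and} under $X^{-1}$ simultaneously, since invariance under $X$ alone would leave $B$ coupling the two blocks and would not force the $(2,2)$ entries of the products to reduce to $DD'$ and $D'D$.

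For the unitary case I would first upgrade invariance to reduction. Since $X$ is unitary, $X^{-1} = X^*$, and the standard fact that a subspace is $T$-invariant exactly when its orthogonal complement is $T^*$-invariant shows that the $X^{-1}$-invariance of $H_0$ is equivalent to the $X$-invariance of $H_0^\perp$. Hence $H_0$ reduces $X$, the off-diagonal block $B$ vanishes, and $X = \begin{pmatrix} A & 0 \\ 0 & D \end{pmatrix}$ with $D = X_{|H_0^\perp}$ now a genuine restriction. Finally $X^* X = X X^* = I$ translates block-diagonally into $A^* A = A A^* = I_{H_0}$ and $D^* D = D D^* = I_{H_0^\perp}$, which is exactly the assertion that $X_{|H_0}$ and $X_{|H_0^\perp}$ are unitary. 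No serious obstacle arises anywhere; the whole lemma reduces to bookkeeping with the block form, the only mildly subtle point being the joint use of the two invariance hypotheses to invert the compression.
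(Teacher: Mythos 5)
Your proof is correct; the paper actually omits the proof of this lemma as ``straightforward,'' and your block-operator argument --- upper-triangular forms for $X$ and $X^{-1}$ forced by the two invariance hypotheses, comparison of the $(2,2)$ entries of $XX^{-1}=X^{-1}X=I$, and the passage to a block-diagonal (reducing) decomposition in the unitary case --- is exactly the standard argument the authors had in mind. Your remark that invariance under $X^{-1}$ is genuinely needed to invert the compression $P_{H_0^\perp}X_{|H_0^\perp}$ is well taken: invariance under $X$ alone yields only $DD'=I$ and not $D'D=I$ (the bilateral shift with $H_0$ the forward-invariant half shows the compression can then fail to be injective).
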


\textbf{Proof of the Theorem} $\boldsymbol {\ref{wtcompthm1}}$.
First, set 
$$(\psi_1\otimes\psi_2)(z,\zeta):=\psi_1(z)\psi_2(\zeta) \;\;\;\mbox{and} 
\;\;\;\boldsymbol{\varphi}(z,\zeta):=(\varphi(z),\varphi(\zeta)),\;\;\; z,\zeta\in \Omega.$$ 
Clearly, $\psi_1\otimes\psi_2\in \mbox{\rm Hol}(\Omega\times\Omega)$ and $\boldsymbol{\varphi}\in \mbox{\rm Hol}(\Omega\times\Omega, \Omega\times\Omega)$. 
Using \eqref{eqnwtcomkernelfns}, it is easily verified that 
\begin{equation}\label{eqnwtcomtensor}
M_{\psi_1}C_{\varphi}\otimes M_{\psi_2} C_{\varphi}=M_{\psi_1\otimes\psi_2}C_{\boldsymbol{\varphi}}.
\end{equation}
By Theorem \ref{thmjetcons}, 
we will be done if we can show that
\begin{equation}\label{eqnwtcompthm1}
\big((RJ_n)_{|\mathcal A_n^\perp}\big) P_{\mathcal A_n^\perp}(M_{\psi_1} 
C_{\varphi}\otimes M_{\psi_2} C_{\varphi})
_{|\mathcal A_n^\perp}\big((RJ_n)_{|\mathcal A_n^\perp}\big)^* = M_{\psi_1 (\mathcal J_n\psi_2) (\mathcal B_n\varphi )}C_{\varphi}.
\end{equation}
To verify this, let $(RJ_n)(f)$ be an arbitrary vector in $\big(\hl, \jetK\big)$ where $f\in \mathcal A_n^\perp$. 
Since $\ker RJ_n=\mathcal A_n$, it follows that
\begin{equation}\label{eqn_weakhom}
\big((RJ_n)_{|\mathcal A_n^\perp}\big) P_{\mathcal A_n^\perp}(M_{\psi_1} 
C_{\varphi}\otimes M_{\psi_2} C_{\varphi})
_{|\mathcal A_n^\perp}\big((RJ_n)_{|\mathcal A_n^\perp}\big)^*(RJ_n f)=
(RJ_n)(M_{\psi_1} C_{\varphi}\otimes M_{\psi_2} C_{\varphi}\big)(f).
\end{equation}
Using \eqref{eqnwtcomtensor}, we see that
\begin{align}\label{egnwtcom1}
\begin{split}
(RJ_n)(M_{\psi_1} C_{\varphi}\otimes M_{\psi_2} C_{\varphi}\big)(f)&=(RJ_n)\big(\psi_1(z)\psi_2(\zeta) f(\varphi(z),\varphi(\zeta)\big)\\
&=\sum_{i=0}^n \Big (\big(\tfrac{\partial}{\partial \zeta}\big)^i\psi_1(z)\psi_2(\zeta)f(\varphi(z),\varphi(\zeta))\Big)_{|_{\Delta}}\otimes e_i.
\end{split}
\end{align}

Also a straightforward computation, noting that $\mathcal J_n \psi_2$ and $\mathcal B_n\varphi$ are lower triangular, shows that
\begin{align}\label{egnwtcom2}
\begin{split}
&\big(M_{\psi_1 (\mathcal J_n\psi_2) (\mathcal B_n\varphi )}C_{\varphi}\big)\big((RJ_n)f\big)(z)\\
&\quad\quad\quad=\big(M_{\psi_1 (\mathcal J_n\psi_2) (\mathcal B_n\varphi )}C_{\varphi}\big)\big(\textstyle  
\sum_{i=0}^n\big(\big(\tfrac{\partial}{\partial \zeta}\big)^{i} 
f(z,\zeta)\big)_{|\Delta}\otimes e_{i}
\big)\\
&\quad\quad\quad=\psi_1(z)\textstyle\sum_{i=0}^n \Big(\textstyle\sum_{j=0}^i\big((\mathcal J_n\psi_2) 
(\mathcal B_n\varphi)\big)_{i,j}(z)\big(\big(\tfrac{\partial}{\partial \zeta}\big)^{j} 
f(z,\zeta)\big)_{|\Delta}(\varphi(z),\varphi(z))\Big)
\otimes e_{i}.
\end{split}
\end{align}
Hence, in view of \eqref{eqn_weakhom}, \eqref{egnwtcom1} and \eqref{egnwtcom2}, to verify \eqref{eqnwtcompthm1}, it suffices to show that
\begin{align}\label{eqnwtcom3}
\begin{split}
&\Big (\big(\tfrac{\partial}{\partial \zeta}\big)^{i}\psi_2(\zeta)f(\varphi(z),\varphi(\zeta))\Big)(z,z)\\
&\quad\quad\quad\quad\quad=\textstyle\sum_{j=0}^i\big((\mathcal J_n\psi_2) 
(\mathcal B_n\varphi)\big)_{i,j}(z)\big(\big(\tfrac{\partial}{\partial \zeta}\big)^{j} 
f(z,\zeta)\big)(\varphi(z),\varphi(z)),\;
\;0\leq i\leq n.
\end{split}
\end{align}
Since $\big((\mathcal J_n\psi_2)(\mathcal B_n\varphi)\big)_{0,0}(z)=\psi_2(z)$, equality in both sides of \eqref{eqnwtcom3} is easily verified for the case $i=0$.
For $ 1\leq i\leq n$, we see that 
\begin{align}\label{eqnremark}
\begin{split}
&\Big(\big(\tfrac{\partial}{\partial \zeta}\big)^{i}  \big(\psi_2(\zeta)
f(\varphi(z),\varphi(\zeta))\big)\Big)(z,z)\\
& \quad\quad=\Big(\psi_2^{(i)}(\zeta) f(\varphi(z),\varphi(\zeta))+\textstyle\sum_{p=1}^{i}\binom{i}{p}\psi_2^{(i-p)}
(\zeta)\big(\tfrac{\partial}{\partial \zeta}\big)^p\big(f(\varphi(z),\varphi(\zeta)\big)\Big)(z,z)\\
&\quad\quad= \psi_2^{(i)}(z) f(\varphi(z),\varphi(z))\\
& \quad\quad\quad\quad\quad\quad\quad\quad+
\textstyle \sum_{p=1}^i \textstyle\sum_{j=1}^{p}\binom{i}{p}\psi_2^{(i-p)}(z)
 (\mathcal B_n\varphi)_{p,j}(z)\big(\big(\tfrac{\partial}{\partial \zeta}\big)^j f(z,\zeta)\big)
(\varphi(z),\varphi(z)).
\end{split}
\end{align}
Here the first equality follows from the Leibniz rule for derivative of product
while the second one follows from \eqref{dibruno}.
Finally, we compute
\begin{align*}
\begin{split}
&\textstyle\sum_{j=0}^i\big((\mathcal J_n\psi_2) 
(\mathcal B_n\varphi)\big)_{i,j}(z)\big(\big(\tfrac{\partial}{\partial \zeta}\big)^{j} 
f(z,\zeta)\big)(\varphi(z),\varphi(z))\\
&\quad\quad =\textstyle\sum_{j=0}^i \textstyle \sum_{p=j}^i \binom{i}{p}\psi_2^{(i-p)}(z)(\mathcal B_n\varphi)_{p,j}(z)
 \big(\big(\tfrac{\partial}{\partial \zeta}\big)^{j} 
f(z,\zeta)\big)(\varphi(z),\varphi(z))\\
&\quad\quad = \psi_2^{(i)}(z)f(\varphi(z),\varphi(z))+\textstyle\sum_{j=1}^i 
\textstyle \sum_{p=j}^i \binom{i}{p}\psi_2^{(i-p)}(z)(\mathcal B_n\varphi)_{p,j}(z)
\big(\big(\tfrac{\partial}{\partial \zeta}\big)^{j} 
f(z,\zeta)\big)(\varphi(z),\varphi(z))
\\
&\quad\quad=\psi_2^{(i)}(z)f(\varphi(z),\varphi(z))+\textstyle\sum_{p=1}^i 
\textstyle \sum_{j=1}^p \binom{i}{p}\psi_2^{(i-p)}(z)(\mathcal B_n\varphi)_{p,j}(z)
\big(\big(\tfrac{\partial}{\partial \zeta}\big)^{j} 
f(z,\zeta)\big)(\varphi(z),\varphi(z)).
\end{split}
\end{align*}
Here the second equality follows since $(\mathcal B_n\varphi)_{q,0}=\delta_{q0}$,
$0\leq q\leq n$.  

The equality in \eqref{eqnwtcom3} is therefore verified, completing the proof of the theorem.  \hfill $\Box$

\begin{remark}\label{reminvarintsub}
From \eqref{eqnremark}, we see that 
if the hypothesis of Theorem 
\ref{wtcompthm1} is in force, then the subspace $\mathcal A_n$ is invariant under the operator $M_{\psi_1}C_{\varphi}\otimes M_{\psi_2}C_{\varphi}$.
\end{remark}

\textbf{Proof of the Theorem} $\boldsymbol{\ref{wtcompthm2}\; \rm(i)}$.
By hypothesis the operators $M_{\psi_1} C_{\varphi}$ on $(\mathcal H, K_1)$ and $M_{\psi_2}C_{\varphi}$ 
on $(\mathcal  H, K_2)$ are bounded and invertible. It is easy to see that 
$(M_{\psi_i}C_{\varphi})^{-1}= M_{\chi_i} C_{\varphi^{-1}},$ where 
$\chi_i = \frac {1}{\psi_i\circ {\varphi^{-1}}}$, $i=1,2.$ Consequently, $(M_{\psi_1}C_{\varphi} \otimes M_{\psi_2}C_{\varphi})^{-1}=
M_{\chi_1}C_{\varphi^{-1}} \otimes M_{\chi_2}C_{\varphi^{-1}}.$ Therefore, by Remark \ref{reminvarintsub}, $\mathcal A_n$ is invariant under both $M_{\psi_1}C_{\varphi} \otimes M_{\psi_2}C_{\varphi}$ and $(M_{\psi_1}C_{\varphi} \otimes M_{\psi_2}C_{\varphi})^{-1}$.
Hence, by Lemma $\ref{lemwtcomp}$, the operator 
$P_{\mathcal A_n^\perp}(M_{\psi_1} C_{\varphi}\otimes M_{\psi_2} C_{\varphi})
_{|\mathcal A_n^\perp} $ is invertible.
An application of  Theorem $\ref{wtcompthm1}$ now completes the proof.
 \hfill$\Box$

\textbf{Proof of the Theorem} $\boldsymbol{\ref{wtcompthm2}\; \rm(ii)}$.
If $M_{\psi_1}C_{\varphi}$ on $(\hl, K_1)$ and $M_{\psi_2}C_{\varphi}$ on $(\hl,K_2)$  are unitary, then 
so is the operator $M_{\psi_1}C_{\varphi} \otimes M_{\psi_2}C_{\varphi}.$
Hence, by the argument used in part $\rm (i)$ of this theorem together with Lemma $\ref{lemwtcomp}$, we see that $\mathcal A_n$ is reducing under $M_{\psi_1}C_{\varphi} \otimes M_{\psi_2}C_{\varphi}$, and therefore 
$(M_{\psi_1}C_{\varphi} \otimes M_{\psi_2}C_{\varphi})_{|\mathcal A_n^\perp}$
is unitary. Hence, by Theorem $\ref{wtcompthm1}$, we conclude that  $ M_{\psi_1 (\mathcal J_n\psi_2) (\mathcal B_n\varphi )}C_{\varphi} $ on 
$\big(\mathcal H, J_n(K_1,K_2)_{|\rm res\, \Delta}\big)$ is unitary.\hfill $\Box$

Recall that the compression of the operators $M_{z}\otimes I$ and $I \otimes M_z$ acting on $(\mathcal H,K_1)\otimes (\mathcal H,K_2)$ to the subspace $\mathcal A_0^\perp$ are unitarily equivalent to the operator $M_z$ on the Hilbert space $(\mathcal H, K_1 K_2)$.
The following corollary isolates the case of $\mathcal A_0$ from Theorem \ref{wtcompthm1} and Theorem  \ref{wtcompthm2} providing a model for the compression of the tensor product of the weighted composition operators on $(\mathcal H, K_1)$ and $(\mathcal H,K_2)$ to $\mathcal A_0^\perp$ in this particular case.  
\begin{corollary}
Let  $\psi_1,\psi_2\in \mbox{\rm Hol}(\Omega)$ and $\varphi \in \mbox{\rm Hol}(\Omega,\Omega)$. Let $K_1$ and $K_2$ be two scalar valued non-negative definite kernels on $\Omega \times \Omega$. 
Suppose that the weighted composition operators $M_{\psi_1} C_{\varphi}$ on $(\mathcal H, K_1)$
and $ M_{\psi_2} C_{\varphi}$ on $(\mathcal H, K_2)$ are 
bounded. 
Then the operator $ M_{\psi_1 \psi_2}C_{\varphi}$ on $(\mathcal{H},K_1K_2 )$ is 
bounded  with 
$$\| M_{\psi_1 \psi_2 }C_{\varphi} \|\leq \|M_{\psi_1} 
C_{\varphi}\|\|M_{\psi_2} C_{\varphi}\|.$$ 
Moreover, if the operators  $M_{\psi_1}C_{\varphi}$ on $(\hl, K_1)$ and $M_{\psi_2}C_{\varphi}$ on $(\hl, K_2)$ are invertible (resp. unitary), and $\varphi\in {\rm Aut}(\Omega)$,  then the operator $M_{\psi_1\psi_2}C_{\varphi}$ is also invertible (resp. unitary) on $(\hl, K_1K_2)$.
\end{corollary}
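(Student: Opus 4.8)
The plan is to recognize this corollary as precisely the $n=0$ instance of Theorem \ref{wtcompthm1} and Theorem \ref{wtcompthm2}, so that essentially nothing new needs to be proved beyond evaluating the relevant objects in degree zero. First I would specialize the jet kernel of Theorem \ref{thmjetcons}: taking $i=j=0$, the only derivatives appearing are zeroth order, so the kernel is the single scalar entry
$$J_0(K_1,K_2)_{|\rm res\, \Delta}(z,w)=K_1(z,w)K_2(z,w),\;\;z,w\in\Omega.$$
Hence the Hilbert space $\big(\mathcal H, J_0(K_1,K_2)_{|\rm res\, \Delta}\big)$ is exactly $(\mathcal H,K_1K_2)$, which matches the target space in the statement.

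Next I would evaluate the two symbol matrices at $n=0$. Both $(\mathcal J_0\psi_2)(z)$ and $(\mathcal B_0\varphi)(z)$ are $1\times 1$ matrices: the diagonal entry of $\mathcal J_n\psi_2$ is $\psi_2(z)$, so $(\mathcal J_0\psi_2)(z)=(\psi_2(z))$, while the leading entry of $\mathcal B_n\varphi$ is $1$ and the Bell-polynomial block $\big(B_{i,j}\big)_{i,j=1}^0$ is empty, so $(\mathcal B_0\varphi)(z)=(1)$. Consequently the composite symbol collapses to $\psi_1(\mathcal J_0\psi_2)(\mathcal B_0\varphi)=\psi_1\psi_2$, and the weighted composition operator $M_{\psi_1(\mathcal J_0\psi_2)(\mathcal B_0\varphi)}C_{\varphi}$ is simply $M_{\psi_1\psi_2}C_{\varphi}$ acting on $(\mathcal H,K_1K_2)$.

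With these two identifications in hand, the boundedness assertion together with the norm bound $\|M_{\psi_1\psi_2}C_{\varphi}\|\leq\|M_{\psi_1}C_{\varphi}\|\,\|M_{\psi_2}C_{\varphi}\|$ follows at once from the concluding inequality of Theorem \ref{wtcompthm1}. For the ``moreover'' part I would invoke Theorem \ref{wtcompthm2} under the additional hypothesis $\varphi\in\mathrm{Aut}(\Omega)$: part (i) yields invertibility of $M_{\psi_1\psi_2}C_{\varphi}$ when the two factors are invertible, and part (ii) yields unitarity when they are unitary. I anticipate no genuine obstacle here, since the entire content is a careful specialization; the only point demanding attention is the correct reading of the indexing conventions, so that the $1\times1$ matrices $\mathcal J_0\psi_2$ and $\mathcal B_0\varphi$ are correctly identified with the scalars $\psi_2$ and $1$ respectively, thereby confirming that the general symbol formula degenerates to the plain product $\psi_1\psi_2$.
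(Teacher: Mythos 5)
Your proposal is correct and is exactly the paper's intended argument: the paper states this corollary as the $n=0$ specialization of Theorem \ref{wtcompthm1} and Theorem \ref{wtcompthm2}, with $J_0(K_1,K_2)_{|\rm res\,\Delta}=K_1K_2$ and the symbol $\psi_1(\mathcal J_0\psi_2)(\mathcal B_0\varphi)$ collapsing to $\psi_1\psi_2$, just as you describe. Nothing further is needed.
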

\subsection{Weighted composition operators and weakly homogeneous operators}
In this subsection, we find a criteria to determine if the multiplication operator $M_z$ on a reproducing kernel Hilbert space is weakly homogeneous.
We begin with a preparatory lemma. First, recall that a positive definite kernel $K:\D\times \D\to \mathcal M_n(\mathbb C)$ is said to be sharp (see \cite{Curtosalinas, Sal}) if the multiplication operator  $M_z$ is bounded on $(\hl, K)$ and 
$\ker (M_z^*-\bar{w})=\bigvee \{K(\cdot,w)\eta:\eta\in \mathbb C^n\}$ for all $w\in \D$. 

The statement and the proof in the forward direction of the following lemma is closely related to  \cite[Proposition 2.1]{MisraKoranyihomog}.  
A version of this lemma, without involving the composition by $\varphi$, is  in  \cite{Curtosalinas}.  
\begin{lemma}\label{weakhomlem}
Let $K(z,w): \mathbb D \times \mathbb D \to \mathcal M_n(\mathbb C)$ be a positive definite kernel. Suppose that the multiplication operator $M_z$ on $(\hl, K)$ is bounded. Let $\varphi$ be a fixed but arbitrary function in $\mb$ which is analytic in a neighbourhood of $\;\sigma(M_z)$. If $X$ is a bounded invertible operator on $(\hl, K)$ of the from $M_{g_{\varphi}}C_{\varphi^{-1}}$, where $g_{\varphi}\in \mbox{\rm Hol}(\D, GL_n(\mathbb C))$, then $X$ intertwines $M_z$ and $\varphi(M_z)$,
that is, $M_zX=X\varphi(M_z)$. Moreover, if $K$ is sharp, then the converse of the above statement is also true, that is, if 
 $X$ is a bounded invertible operator on $(\hl, K)$  intertwining $M_z$ and $\varphi(M_z)$, then $X$ is of the form $M_{g_{\varphi}}C_{\varphi^{-1}}$ for some $g_{\varphi}\in \mbox{\rm Hol}(\D, GL_n(\mathbb C))$. 
\end{lemma}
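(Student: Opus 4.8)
The plan is to treat the two implications separately: the forward direction is a direct computation needing no sharpness, while the converse is where sharpness does the real work through an analysis of the eigenvectors of the adjoint operators. Before anything else I would record the functional-calculus identity $\varphi(M_z) = M_\varphi$, valid because $p(M_z) = M_p$ for every polynomial $p$ and $\varphi$ is analytic near $\sigma(M_z)$; this identity also certifies that $M_\varphi$ is a bounded multiplier. For the forward direction, writing $X = M_{g_\varphi} C_{\varphi^{-1}}$ so that $(Xf)(z) = g_\varphi(z)\, f(\varphi^{-1}(z))$, the required intertwining $M_z X = X\varphi(M_z) = X M_\varphi$ reduces to the pointwise identity $z\, g_\varphi(z)\, f(\varphi^{-1}(z)) = g_\varphi(z)\, \varphi(\varphi^{-1}(z))\, f(\varphi^{-1}(z))$, which is immediate since $\varphi(\varphi^{-1}(z)) = z$ and the scalar $z$ commutes with the matrix $g_\varphi(z)$.

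For the converse I would start by taking adjoints in $M_z X = X M_\varphi$ to obtain $X^* M_z^* = M_\varphi^* X^*$, and then identify the eigenspaces of $M_\varphi^*$. Recalling from \eqref{eqnwtcomkernelfns} that $M_z^* K(\cdot,w)\eta = \overline{w}\,K(\cdot,w)\eta$ and $M_\varphi^* K(\cdot,w)\eta = \overline{\varphi(w)}\,K(\cdot,w)\eta$, the crucial geometric input is that $\ker(M_\varphi^* - \overline{w})$ is exactly the kernel-function subspace sitting at $\varphi^{-1}(w)$. To establish this I would use that $\varphi \in \mb$ to factor $\varphi(z) - w = (z - \varphi^{-1}(w))\, h_w(z)$ with $h_w$ holomorphic and zero-free on $\mathbb{D}$, so that $M_\varphi - w = (M_z - \varphi^{-1}(w))\, M_{h_w}$ with $M_{h_w}$ bounded and invertible. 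Passing to adjoints and cancelling the invertible factor $M_{h_w}^*$ yields $\ker(M_\varphi^* - \overline{w}) = \ker(M_z^* - \overline{\varphi^{-1}(w)})$, which by sharpness equals $\{K(\cdot,\varphi^{-1}(w))\eta : \eta \in \mathbb{C}^n\}$; positive definiteness makes $\eta \mapsto K(\cdot,\varphi^{-1}(w))\eta$ injective, so this $n$-dimensional space is closed and the span is attained honestly.

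With the eigenspace identified, the relation $M_\varphi^*(X^* K(\cdot,w)\eta) = \overline{w}\,(X^* K(\cdot,w)\eta)$ shows $X^*$ sends each $K(\cdot,w)\eta$ into $\ker(M_\varphi^* - \overline{w})$, so there is a matrix $F(w) \in \mathcal{M}_n(\mathbb{C})$ with $X^* K(\cdot,w)\eta = K(\cdot,\varphi^{-1}(w))\, F(w)\eta$; injectivity of $X^*$ forces $F(w)$ to be invertible. Pairing against an arbitrary $f \in \mathcal{H}$ and using the reproducing property twice gives $(Xf)(w) = F(w)^* f(\varphi^{-1}(w))$, that is, $X = M_{g_\varphi} C_{\varphi^{-1}}$ with $g_\varphi = F^*$. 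Invertibility of $g_\varphi$ is already in hand; for holomorphy I would argue locally, testing against the columns $f_j = K(\cdot,w_0)e_j$, for which $[\,(Xf_1)(w)\mid \cdots \mid (Xf_n)(w)\,] = g_\varphi(w)\, K(\varphi^{-1}(w),w_0)$, whence $g_\varphi(w) = [\,(Xf_1)(w)\mid\cdots\mid(Xf_n)(w)\,]\, K(\varphi^{-1}(w),w_0)^{-1}$ is holomorphic in a neighbourhood of $\varphi(w_0)$, and letting $w_0$ range over $\mathbb{D}$ covers the whole disc.

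The step I expect to be the main obstacle is the identification $\ker(M_\varphi^* - \overline{w}) = \{K(\cdot,\varphi^{-1}(w))\eta\}$: this is exactly where sharpness is indispensable, and one must be careful to capture the \emph{entire} eigenspace rather than a mere inclusion, which is what the zero-free factorization of $\varphi(z) - w$ together with the invertibility of $M_{h_w}$ provides. The holomorphy of $g_\varphi = F^*$ is a secondary technical point, resolved by the local kernel-function computation above.
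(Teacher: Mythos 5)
Your proof is correct and follows essentially the same route as the paper's: take adjoints, note that $X^*$ maps each $K(\cdot,w)\eta$ into $\ker\big(\varphi(M_z)^*-\overline{w}\big)$, identify this eigenspace with $\ker\big(M_z^*-\overline{\varphi^{-1}(w)}\big)$, invoke sharpness to produce the invertible matrix $F(w)$, recover the weighted composition form from the reproducing property, and establish holomorphy of $g_\varphi$ by a local matrix-inversion argument. The only substantive difference is that you actually prove the eigenspace identity --- via the zero-free factorization $\varphi(z)-w=(z-\varphi^{-1}(w))h_w(z)$ and the boundedness and invertibility of $M_{h_w}$ (which uses that the pole $1/\bar{a}$ of $\varphi$ lies off $\sigma(M_z)$) --- whereas the paper dismisses this step as ``easy to see,'' so your write-up fills in a detail rather than taking a different path.
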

\begin{proof}
If $X$ is a bounded invertible operator  of the form $M_{g_{\varphi}}C_{\varphi^{-1}}$, then an easy computation shows that $X\varphi(M_z)=
M_zX.$

Conversely, assume that $X$ is a bounded invertible operator on $(\hl, K)$ such that $M_z X=X\varphi(M_z)$. Taking adjoint and acting on the vector $K(\cdot,w)\eta$, $w\in \mathbb D, \eta\in \mathbb C^n$, we obtain
\begin{equation}
{\varphi(M_z)}^*X^*K(\cdot,w)\eta=X^*M_z^*K(\cdot,w)\eta=\overbar{w}X^*K(\cdot,w)\eta.
\end{equation}
Thus $X^*K(.,w)\eta\in \ker \big({\varphi(M_z)}^*-\overbar{w}\big)$. It is easy to see that  $\ker \big({\varphi(M_z)}^*-\overbar{w}\big)=
\ker \big(M_z^*-\overbar{\varphi^{-1}(w)}\big)$.
Since $K$ is sharp and the vector $X^*K(\cdot,w)\eta\in \ker \big(M_z^*-\overbar{\varphi^{-1}(w)}\big)$, it follows  that there exists a unique vector $h_{\varphi}(w)\eta\in\mathbb C^n$ such that 
\begin{equation}\label{eqnwkhomlemma1}
X^*K(\cdot,w)\eta=K(\cdot,\varphi^{-1}(w))h_{\varphi}(w)\eta.
\end{equation}
The invertibility  of the matrix  $K(\varphi^{-1}(w),\varphi^{-1}(w))$ ensures the  uniqueness of the vector $h_{\varphi}(w)\eta$.  
It is easily verified that for each $w\in \mathbb D$, the map $\eta\mapsto h_{\varphi}(w)\eta$ defines a linear map on $\mathbb C^n$. Since $X$ is invertible, it follows from \eqref{eqnwkhomlemma1} that $h_{\varphi}(w)$ is invertible. Now for any $w\in \mathbb D$, $\eta\in \mathbb C^n$ and $f\in (\hl, K)$, we see that
\begin{align*}
\left\langle (Xf)(w),\eta \right\rangle  
&=\left\langle Xf, K(\cdot,w)\eta \right\rangle \\
&= \left\langle f, X^* K(\cdot,w)\eta \right\rangle \\
& = \left\langle f, K(\cdot, \varphi^{-1}(w)) h_{\varphi}(w)\eta\right\rangle \\
&=\left\langle (f\circ \varphi^{-1})(w), h_{\varphi}(w) \eta \right\rangle \\
& =\left \langle {h_{\varphi}(w)}^*(f\circ \varphi^{-1})(w),\eta\right \rangle.
\end{align*}

Hence $X=M_{g_{\varphi}}C_{\varphi^{-1}}$,
where $g_{\varphi}(w)={h_{\varphi}(w)}^*$, $w\in \mathbb D$. Since we have already shown that $g_{\varphi}(w)$, $w\in \mathbb D$, is invertible, to complete the proof, we only need to show that the map $w\mapsto g_{\varphi}(w)$ is holomorphic. 

Let $w_0$ be a fixed but arbitrary point in $\D$. Since $K(\varphi^{-1}(w_0),\varphi^{-1}(w_0))$
is invertible, there exists a neighbourhood 
$\Omega_0$ of $w_0$ such that $K(\varphi^{-1}(w_0),\varphi^{-1}(w))$ is invertible for all $w$ in $\Omega_0$. From \eqref{eqnwkhomlemma1}, we have
$$
(X^*K(\cdot,w)\eta)\varphi^{-1}(w_0)=K(\varphi^{-1}(w_0),\varphi^{-1}(w))h_{\varphi}(w)\eta,\;w\in \Omega_0.
$$ 
Therefore
$$h_{\varphi}(w)\eta = K(\varphi^{-1}(w_0),\varphi^{-1}(w))^{-1}(X^*K(\cdot,w)\eta)\varphi^{-1}(w_0),\;w\in \Omega_0.$$
Since the right hand side of the above equality is anti-holomorphic on $\Omega_0$, it follows that the function $h_{\varphi}(w)$ is anti-holomorphic on $\Omega_0$, and therefore $g_{\varphi}$ is holomorphic on $\Omega_0$. Since $w_0$ is arbitrary, we conclude that $g_{\varphi}$ is  holomorphic on $\Omega$. This completes the proof.
\end{proof}

 
\begin{proposition}\label{propweakhomo}
Let $K(z,w):\mathbb D \times \mathbb D \to \mathcal M_n(\mathbb C)$ be a positive definite kernel. Suppose that the multiplication operator $M_z$ on $(\hl, K)$ is bounded. If for each $\varphi\in \mb$, there exists a function $g_{\varphi}\in  {\rm Hol}(\mathbb D, GL_n(\mathbb C))$ such that the operator $M_{g_{\varphi}}C_{\varphi^{-1}}$ on $(\hl, K)$ is bounded and invertible, then $M_z$ on $(\hl, K)$ is weakly homogeneous. Moreover, if $K$ is sharp
and the operator $M_z$ on $(\hl, K)$ is weakly homogeneous, 
then for each $\varphi$ in $\mb$, there exists $g_{\varphi}\in  \mbox{\rm Hol}(\mathbb D, GL_n(\mathbb C))$ such that the weighted composition operator $M_{g_{\varphi}}C_{\varphi^{-1}}$ on $(\hl, K)$ is bounded and invertible .
\end{proposition}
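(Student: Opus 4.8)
The plan is to treat the two implications separately, using Lemma \ref{weakhomlem} as the engine in each; the one genuinely delicate point is verifying the spectral requirement $\sigma(M_z)\subseteq\overbar{\D}$ in the forward implication, which in the converse comes for free from the definition of weak homogeneity. For the forward direction I would first record the elementary identity, valid for any bounded invertible $X_\varphi=M_{g_\varphi}C_{\varphi^{-1}}$ and requiring no functional calculus,
\[
M_z\,\big(M_{g_\varphi}C_{\varphi^{-1}}\big)=\big(M_{g_\varphi}C_{\varphi^{-1}}\big)\,M_\varphi ,
\]
where $M_\varphi$ denotes multiplication by the function $\varphi$; indeed both sides send $f$ to $w\mapsto w\,g_\varphi(w)f(\varphi^{-1}(w))$. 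Hence $M_\varphi=X_\varphi^{-1}M_zX_\varphi$ is bounded and similar to $M_z$, so $\sigma(M_\varphi)=\sigma(M_z)$. In particular, taking $\varphi=\varphi_{0,c}$ shows that multiplication by $\varphi_{0,c}$ is bounded on $(\hl,K)$ for every $c\in\D$.

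Next I would establish $\sigma(M_z)\subseteq\overbar{\D}$. The key observation is that any rational function of degree at most one whose pole (if any) lies off $\overbar{\D}$ can be written as $A\,\varphi_{0,c}+B$ or as $Az+B$, for a suitable $c\in\D$ and constants $A,B$, and is therefore a bounded multiplier of $(\hl,K)$ by the previous paragraph (together with boundedness of $M_z$). Now fix $\lambda_0$ with $|\lambda_0|>1$ and any $\varphi=\varphi_{0,a}\in\mb$. A direct computation shows that $(\varphi-\lambda_0)^{-1}$ is such a degree-$\le 1$ rational function, its pole being $\varphi^{-1}(\lambda_0)$; since $\varphi^{-1}\in\mb$ preserves the exterior $\{|z|>1\}$ we get $|\varphi^{-1}(\lambda_0)|>1$, so the pole is off $\overbar{\D}$. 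Thus $M_{(\varphi-\lambda_0)^{-1}}$ is bounded and inverts $M_\varphi-\lambda_0$, whence $M_\varphi-\lambda_0$, and therefore $M_z-\lambda_0=X_\varphi(M_\varphi-\lambda_0)X_\varphi^{-1}$, is invertible; hence $\lambda_0\notin\sigma(M_z)$, giving $\sigma(M_z)\subseteq\overbar{\D}$. With the spectral condition in hand, each $\varphi\in\mb$ (pole at $1/\overbar{a}$, of modulus $>1$) is analytic near $\sigma(M_z)$, so $\varphi(M_z)$ is defined and, by the Cauchy integral together with $(\zeta-M_z)^{-1}=M_{(\zeta-z)^{-1}}$, coincides with $M_\varphi$ — which is exactly the content of the forward direction of Lemma \ref{weakhomlem}. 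Therefore $\varphi(M_z)=M_\varphi=X_\varphi^{-1}M_zX_\varphi\sim M_z$ for all $\varphi\in\mb$, so $M_z$ is weakly homogeneous.

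The converse is a direct application of the other half of Lemma \ref{weakhomlem}. Assuming $K$ sharp and $M_z$ weakly homogeneous, the definition supplies $\sigma(M_z)\subseteq\overbar{\D}$ (so each $\varphi\in\mb$ is analytic near $\sigma(M_z)$ and $\varphi(M_z)$ is defined) and, for each $\varphi$, a bounded invertible $L$ with $L\,\varphi(M_z)\,L^{-1}=M_z$, that is $M_zL=L\,\varphi(M_z)$. Thus $L$ is a bounded invertible operator intertwining $M_z$ and $\varphi(M_z)$, and the sharpness hypothesis lets me invoke the converse part of Lemma \ref{weakhomlem} to conclude $L=M_{g_\varphi}C_{\varphi^{-1}}$ for some $g_\varphi\in\mbox{\rm Hol}(\D,GL_n(\mathbb C))$, which is precisely the desired bounded invertible weighted composition operator.

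I expect the main obstacle to be the spectral step $\sigma(M_z)\subseteq\overbar{\D}$ in the forward implication: it must be extracted from the mere existence of the invertible operators $M_{g_\varphi}C_{\varphi^{-1}}$, and the example $(\hl,(1-z\overbar{w}/C)^{-1})$ with $C>1$ (whose $M_z$ has spectral radius $\sqrt{C}>1$) shows that without this hypothesis $\sigma(M_z)$ can genuinely exceed $\overbar{\D}$. The M\"obius-function multiplier argument above is exactly what converts the boundedness and invertibility of the operators $M_{g_\varphi}C_{\varphi^{-1}}$ into the needed control of the spectrum; everything else is a quick invocation of Lemma \ref{weakhomlem}.
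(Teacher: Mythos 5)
Your proof is correct, but the forward direction follows a genuinely different route from the paper's. The paper does not attempt to control $\sigma(M_z)$ directly: it applies Lemma \ref{weakhomlem} only for $\varphi$ in a neighbourhood $U$ of the identity of $\mb$ (where $\varphi(M_z)$ is automatically well defined, the pole of $\varphi$ being far from the compact set $\sigma(M_z)$), obtaining $M_z\sim\varphi(M_z)$ for $\varphi\in U$, and then quotes a straightforward generalization of \cite[Lemma 2.2]{constantchar} to pass from this local similarity to weak homogeneity; that cited lemma is what delivers both the spectral inclusion $\sigma(M_z)\subseteq\overbar{\D}$ and the similarity for all of $\mb$. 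You instead extract $\sigma(M_z)\subseteq\overbar{\D}$ directly from the hypothesis: the purely algebraic identity $M_zX_\varphi=X_\varphi M_\varphi$ shows each $M_{\varphi_{0,c}}$ is bounded, hence every rational function of degree at most one with pole outside $\overbar{\D}$ is a bounded multiplier, and such multipliers invert $M_z-\lambda_0$ for $|\lambda_0|>1$; once the spectral inclusion is known, Lemma \ref{weakhomlem} applies to every $\varphi\in\mb$ simultaneously. What your approach buys is self-containedness (no local-to-global group argument, no external citation); what the paper's buys is brevity, at the price of leaning on the Bagchi--Misra lemma. The converse halves of the two proofs are identical, both being immediate from the second part of Lemma \ref{weakhomlem}. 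One point worth making explicit in your write-up: passing from the formal identity $M_zX_\varphi=X_\varphi M_\varphi$ on ${\rm Hol}(\mathbb D,\mathbb C^n)$ to the operator identity $M_\varphi=X_\varphi^{-1}M_zX_\varphi$ on $(\hl,K)$ (in particular, the fact that $\varphi f\in(\hl,K)$ whenever $f\in(\hl,K)$) uses that the map $f\mapsto g_\varphi\,(f\circ\varphi^{-1})$ is injective on all holomorphic functions, which holds because $g_\varphi$ takes values in $GL_n(\mathbb C)$ and $\varphi^{-1}$ is an automorphism; this is immediate, but it is the step that legitimizes calling $X_\varphi^{-1}M_zX_\varphi$ a multiplication operator.
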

\begin{proof}
%


Let $U$ be a neighbourhood of the identity in $\mbox{\rm M\"ob}$ such that $\varphi(M_z)$ is well-defined for all $\varphi\in U$. By hypothesis, there exists $g_{\varphi}\in 
\mbox{\rm Hol}(\mathbb D, GL_n(\mathbb C))$ such that the operator $M_{g_{\varphi}}C_{\varphi^{-1}}$ on $(\hl, K)$ is bounded and 
invertible. Then by Lemma \ref{weakhomlem}, it follows that the operator $M_z$ satisfies $M_zX=\varphi(M_z)X$, $\varphi\in U$, where $X=M_{g_{\varphi}}C_{\varphi^{-1}}$ on $(\hl, K)$. Hence $M_z$ is similar to $\varphi(M_z)$ for all $\varphi\in U$.
Now a straightforward generalization of \cite[Lemma 2.2]{constantchar} completes the proof in the forward direction. The proof for the second part follows directly from Lemma
\ref{weakhomlem}.
\end{proof}

The theorem appearing below shows that the weak homogeneity of the multiplication operator is preserved under the jet construction.
\begin{theorem}\label{thmweakhom}
Suppose that $K_1$ and $K_2$ are two scalar valued sharp positive definite kernels on $ \mathbb D\times \mathbb D$. 
If the multiplication operators $M_z$ on $(\mathcal H, K_1)$ and $(\hl, K_2)$ are weakly homogeneous, then $M_z$ on   
$\big(\mathcal H,J_n(K_1,K_2)_{|\rm res\, \Delta}\big)$ is also weakly homogeneous.
\end{theorem}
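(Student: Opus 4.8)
The plan is to run both directions of Proposition \ref{propweakhomo} as the outer scaffold, using Theorem \ref{wtcompthm2}\,(i) as the bridge that carries invertible intertwiners from the two factors up to the jet space. Fix an arbitrary $\varphi\in\mb$. Because $K_1$ is sharp and $M_z$ on $(\hl,K_1)$ is weakly homogeneous, the converse half of Proposition \ref{propweakhomo} supplies a nowhere-vanishing scalar function $g_1\in\mbox{\rm Hol}(\D,GL_1(\mathbb C))$ for which $M_{g_1}C_{\vphn}$ is bounded and invertible on $(\hl,K_1)$; applying the same half to $(\hl,K_2)$ yields $g_2\in\mbox{\rm Hol}(\D,GL_1(\mathbb C))$ with $M_{g_2}C_{\vphn}$ bounded and invertible on $(\hl,K_2)$.

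Next I would invoke Theorem \ref{wtcompthm2}\,(i). Since $\vphn\in\mb=\mbox{\rm Aut}(\D)$, the hypotheses are met with $\psi_1=g_1$, $\psi_2=g_2$ and automorphism $\vphn$, so the theorem produces a bounded invertible weighted composition operator $M_{G_\varphi}C_{\vphn}$ on $\big(\hl,\jetK\big)$, where $G_\varphi:=g_1\,(\mathcal J_n g_2)\,(\mathcal B_n\vphn)$. To close the loop through the forward half of Proposition \ref{propweakhomo} (now with matrix size $n+1$ in place of $n$, and note that this half requires no sharpness of the jet kernel), it remains only to check that $G_\varphi\in\mbox{\rm Hol}(\D,GL_{n+1}(\mathbb C))$.

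This last verification is the one genuinely non-bookkeeping step. Holomorphy of $G_\varphi$ is clear, so the issue is pointwise invertibility. Now $G_\varphi(z)$ is a nonzero scalar $g_1(z)$ times the product of two lower triangular matrices, whence it suffices that each has nowhere-vanishing diagonal. The matrix $\mathcal J_n g_2$ has constant diagonal $g_2$, which never vanishes; and the diagonal entries of $\mathcal B_n\vphn$ are $1$ together with the Bell values $B_{i,i}=\big((\vphn)'\big)^{i}$, $1\le i\le n$, which never vanish precisely because an automorphism of $\D$ has nowhere-vanishing derivative. Hence $G_\varphi(z)$ is invertible for every $z\in\D$. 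As $\varphi\in\mb$ was arbitrary, we have produced, for each $\varphi$, a $G_\varphi\in\mbox{\rm Hol}(\D,GL_{n+1}(\mathbb C))$ with $M_{G_\varphi}C_{\vphn}$ bounded and invertible on $\big(\hl,\jetK\big)$, and the forward direction of Proposition \ref{propweakhomo} then gives that $M_z$ on $\big(\hl,\jetK\big)$ is weakly homogeneous. I expect the only delicate point to be the identification of the diagonal of $\mathcal B_n\vphn$ and the observation that nonvanishing of the automorphism's derivative is exactly what forces $G_\varphi$ to take values in $GL_{n+1}(\mathbb C)$; everything else is an assembly of the cited results.
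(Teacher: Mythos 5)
Your proof follows essentially the same route as the paper's: the converse half of Proposition \ref{propweakhomo} applied to the two factors (using sharpness of $K_1$ and $K_2$), Theorem \ref{wtcompthm2}\,(i) to lift the pair of bounded invertible weighted composition operators to the jet space, and the forward half of Proposition \ref{propweakhomo} to conclude. Your explicit verification that $G_\varphi = g_1(\mathcal J_n g_2)(\mathcal B_n\varphi^{-1})$ takes values in $GL_{n+1}(\mathbb C)$ --- via the lower triangular structure, the constant diagonal $g_2$ of $\mathcal J_n g_2$, and the diagonal entries $\big((\varphi^{-1})'\big)^i$ of $\mathcal B_n\varphi^{-1}$, nonvanishing since $\varphi^{-1}$ is an automorphism --- is correct and supplies a detail the paper leaves implicit when it re-invokes Proposition \ref{propweakhomo}.
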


\begin{proof}
Since the operators $M_z$ on $(\mathcal H, K_1)$ and $(\hl, K_2)$ are weakly homogeneous, by Proposition 
\ref{propweakhomo}, for each $\varphi\in \mbox{\rm M\"ob}$, there exist $g_{\varphi}, h_{\varphi}\in \mbox{\rm Hol}(\mathbb D,\mathbb C\setminus \{0\})$ such that the weighted composition operators $M_{g_{\varphi}}C_{\varphi^{-1}}$ on $(\mathcal H, K_1)$ and 
$ M_{h_{\varphi}}C_{\varphi^{-1}} $ on $(\mathcal H, K_2)$ are bounded and  invertible. Then, by the first part of Theorem  \ref{wtcompthm2}, it follows that the operator $ M_{g_{\varphi}(\mathcal J_n h_{\varphi}) (\mathcal B_n\varphi^{-1} )}C_{\varphi^{-1}}$ on $\big(\mathcal H,J_n(K_1,K_2)_{|\rm res\, \Delta}\big)$ is bounded and invertible. An application of Proposition \ref{propweakhomo}, once again,  completes the proof.
\end{proof}
\section{Weakly homogeneous operators in the class 
$\mathcal FB_2(\D)$}\label{secfb_2}
Although there are examples (see \cite[Theorem $(1.1)^\prime$]{wtcomplarge}, \cite[Theorem 3.3]{wtcompsmall}) of scalar valued sharp kernels $K$ such that the composition operators $C_\varphi$, $\varphi \in \mb$, are not bounded on $(\mathcal H, K)$, it does not necessarily follow that the multiplication operator $M_z$ on $(\mathcal H,K)$ fails to be weakly homogeneous. In many other examples excluding the ones in \cite{wtcomplarge} and  \cite{wtcompsmall}, the operator $C_\varphi$ is bounded for all $\varphi$ in $\mb$ showing that the corresponding multiplication operator $M_z$ is weakly homogeneous. 
While the question of the existence of an  operator $M_z$ which is not weakly homogeneous on a Hilbert space $(\mathcal H, K)$, where $K$ is a scalar valued sharp kernel, remains unanswered, in this section, we find such examples where the kernel $K$ takes values in $\mathcal M_2(\mathbb C)$.  

Given a bounded domain $\Omega\subseteq \mathbb C$, a smaller class  $\mathcal FB_n(\Omega) \subseteq B_n(\Omega)$, $n\geq 2$, of operators was  introduced in \cite{Flagstructure}.  A complete set of tractable  unitary invariants and concrete  models were given for operators in this class. For our purposes, it is enough to restrict attention to the case of $\Omega=\mathbb D$ and $n=2$.

\begin{definition}
An operator $T$ on $H_0\bigoplus H_1$ is said to be in $\mathcal FB_2(\D)$ 
if it is of the form 
$ \begin{bmatrix}
T_0 & S\\
0 &  T_1   
\end{bmatrix}$, 
where $T_0,T_1\in B_1(\D)$ and $S$ is a non-zero operator satisfying
$T_0S=ST_1.$ 
\end{definition}

Since $\mathcal FB_2(\D)\subseteq B_2(\D)$, every operator $T$
in $\mathcal FB_2(\D)$ is unitarily equivalent to the adjoint $M^*_z$ of the multiplication operator by the coordinate function $z$ on some reproducing kernel Hilbert space $(\hl, K)$, where $K$ takes values in $\mathcal M_2(\mathbb C)$.
It is known that $\mathcal FB_2(\D)$  contains all homogeneous operators in $B_2(\D)$. In this section, we study weakly homogeneous operators in  $\mathcal FB_2(\D)$. The following proposition is an essential tool in this study.


\begin{proposition}\label{propsimFl}{\rm(\cite[Proposition 3.3]{Flagstructure})}
Let $T$ and $\tilde T$ be any two operators in $\mathcal FB_2(\D).$
If $L$ is a bounded invertible operator which intertwines $T$ and $\tilde T,$ then $L$ and $L^{-1}$ are upper triangular. 
\end{proposition}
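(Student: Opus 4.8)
The plan is to exploit the specific block–triangular structure of operators in $\mathcal FB_2(\D)$ together with the fact that $T_0$ and $T_1$ sit in $B_1(\D)$, and to read off the form of an intertwining operator by testing it against the holomorphic Hermitian vector bundle data. Write $T=\begin{bmatrix}T_0 & S\\ 0 & T_1\end{bmatrix}$ and $\tilde T=\begin{bmatrix}\tilde T_0 & \tilde S\\ 0 & \tilde T_1\end{bmatrix}$ on $H_0\oplus H_1$, and write the bounded invertible $L$ intertwining them, $LT=\tilde TL$, in block form $L=\begin{bmatrix}L_{00} & L_{01}\\ L_{10} & L_{11}\end{bmatrix}$. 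The goal is to show $L_{10}=0$; once $L$ is upper triangular and invertible, its inverse is automatically upper triangular, so that disposes of $L^{-1}$ for free.

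First I would pass to adjoints so as to work with the Cowen–Douglas eigenvector picture, where the computations are cleanest. Since each $T_i,\tilde T_i\in B_1(\D)$, for every $w\in\D$ the spaces $\ker(T_i^*-\bar w)$ and $\ker(\tilde T_i^*-\bar w)$ are one–dimensional, spanned by holomorphic (in $w$) nonvanishing sections $\gamma_i(w)$, $\tilde\gamma_i(w)$ of the respective bundles. From $LT=\tilde TL$ one gets $T^*L^*=L^*\tilde T^*$, so $L^*$ maps $\ker(\tilde T^*-\bar w)$ into $\ker(T^*-\bar w)$. The key structural input is that, because $T$ is block upper triangular, the bundle $E_{\tilde T}$ restricted to the eigenvectors coming from the $T_1$/$\tilde T_1$ corner interacts with the $T_0$ corner only through $S$; concretely I would compute $(T^*-\bar w)$ applied to a test vector of the form $(x,y)$ and use $T_0S=ST_1$ to produce explicit eigenvectors of $T^*$ and $\tilde T^*$ at $\bar w$. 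Comparing the bottom components via the intertwining relation should force $L_{10}^*$ to annihilate all the eigenvectors $\tilde\gamma_1(w)$ spanning $\ker(\tilde T_1^*-\bar w)$.

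The decisive step is then to invoke condition (iii) of the Cowen–Douglas definition: $\overline{\bigvee}\{\ker(\tilde T_1^*-\bar w):w\in\D\}=H_1$. Since $L_{10}^*$ (equivalently the relevant block of $L^*$) kills a spanning family of eigenvectors and is bounded, it must vanish on all of $H_1$, which gives $L_{10}=0$. In carrying this out I expect the main obstacle to be disentangling the off–diagonal contribution of $S$: the eigenvectors of $T^*$ at $\bar w$ are not simply the direct sum of eigenvectors of $T_0^*$ and $T_1^*$, but are twisted by $S$ through the relation $T_0S=ST_1$, so one has to track how this twist propagates through $LT=\tilde TL$ and verify that it does not contaminate the bottom–left block. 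Once the eigenvector bookkeeping is set up correctly, the vanishing of $L_{10}$ and hence the upper–triangularity of both $L$ and $L^{-1}$ follow, completing the proof.
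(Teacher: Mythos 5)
The paper offers no proof of Proposition \ref{propsimFl} at all; it is quoted verbatim from \cite[Proposition 3.3]{Flagstructure}, so your outline has to be judged against what a correct proof of this rigidity statement requires, and there it has a genuine gap at the decisive step. Write $L=\begin{psmallmatrix} L_{00}&L_{01}\\ L_{10}&L_{11}\end{psmallmatrix}$ and equate the $(2,1)$ and $(1,1)$ entries of $LT=\tilde TL$:
\begin{equation*}
L_{10}T_0=\tilde T_1L_{10},\qquad\quad L_{00}T_0-\tilde T_0L_{00}=\tilde SL_{10}.
\end{equation*}
The first identity is all that your eigenvector bookkeeping can detect: $L$ maps $\ker(T-w)$ into $\ker(\tilde T-w)$, and applying $L$ to the eigenvector $(\gamma_0(w),0)$ shows only that $L_{10}\gamma_0(w)$ lies in $\ker(\tilde T_1-w)$, i.e. $L_{10}\gamma_0(w)=b(w)\tilde\gamma_1(w)$ for some holomorphic scalar function $b$. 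The density condition (iii) then gives nothing: it would yield $L_{10}=0$ only if you already knew $b\equiv0$, and nothing in the bundle picture rules out a nonzero $b$ --- nonzero bounded intertwiners between two operators in $B_1(\mathbb D)$ exist in abundance (any nonzero multiplier produces one), and such an $L_{10}$ is consistent with every kernel constraint you list. Preservation of the flag is the \emph{content} of the theorem, not a formal consequence of intertwining. What actually forces $L_{10}=0$ is the second, coupled equation: one must prove that the Rosenblum-type equation $L_{00}T_0-\tilde T_0L_{00}=\tilde SL_{10}$ has no \emph{bounded} solution $L_{00}$ unless $L_{10}=0$. That is an analytic fact, not bookkeeping. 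In the simplest instance $T=\tilde T$ with $T_0=T_1=M_z^*$ on the Hardy space and $S=I$, it is exactly the assertion that no bounded operator has commutator with $M_z^*$ equal to a nonzero co-analytic Toeplitz operator, i.e. the unboundedness of differentiation; this is precisely the mechanism the present paper develops in Lemmas \ref{lemrosenbl} and \ref{lemstrongir} and uses for the strong irreducibility proposition at the end of Section \ref{secfb_2}. Your outline contains no step playing this role, and without one the proof cannot close.

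Two further errors. First, passing to adjoints is backwards: the Cowen--Douglas eigenvector picture lives on $T$, not on $T^*$. For $T_i\in B_1(\mathbb D)$ it is $\ker(T_i-w)$ that is spanned by a holomorphic section; under the convention of \cite{CD} and \cite{Flagstructure}, $T_i-w$ is surjective, so $\ker(T_i^*-\bar w)=\ran(T_i-w)^\perp=\{0\}$, and the ``holomorphic nonvanishing sections'' $\tilde\gamma_i(w)$ of $\ker(\tilde T_i^*-\bar w)$ that you propose to test $L^*$ against need not exist (at best they would be anti-holomorphic). The computation must be run on $\ker(T-w)$ directly, as above. Second, the shortcut ``$L$ upper triangular and invertible, hence $L^{-1}$ upper triangular'' is false for Hilbert space operators: with $U$ the unilateral shift and $E_0=I-UU^*$ the rank-one projection onto the constants, $L=\begin{psmallmatrix} U&E_0\\ 0&U^*\end{psmallmatrix}$ is invertible and $L^{-1}=\begin{psmallmatrix} U^*&0\\ E_0&U\end{psmallmatrix}$ is not upper triangular. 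The statement for $L^{-1}$ is not free; it holds because $L^{-1}$ is itself an invertible intertwiner in the reverse direction ($TL^{-1}=L^{-1}\tilde T$), so the argument proving triangularity of $L$ applies to it with the roles of $T$ and $\tilde T$ exchanged --- which is also how the paper uses the proposition in the proof of Corollary \ref{corFB_2weak}.
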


\begin{corollary}\label{corFB_2weak}
Let $T=\begin{pmatrix}
T_0&S\\
0&T_1
\end{pmatrix}$ on $\mathcal H_0\oplus \mathcal H_1$
and $\tilde T=\begin{pmatrix}
\tilde T_0&\tilde S\\
0&\tilde T_1
\end{pmatrix}$ on $\tilde{\mathcal H}_0\oplus \tilde{\mathcal H}_1$
be two operators in $\mathcal FB_2(\D).$ Then $T$ is similar to
$\tilde T$ if and only if there exist bounded invertible operators $X:\mathcal H_0\to\tilde {\mathcal H}_0$, $Y:\mathcal H_1\to\tilde {\mathcal H}_1$ and a bounded operator 
$Z:\mathcal H_1\to\tilde {\mathcal H}_0$ such that 
\begin{itemize}
\item[\rm (i)]$X T_0=\tilde T_0 X,\;\;Y T_1=\tilde{T}_1 Y$,
\item[\rm (ii)]$X S+Z T_1=\tilde{T}_0 Z+\tilde S Y.$
\end{itemize}
\end{corollary}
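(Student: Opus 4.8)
The plan is to obtain both implications from a single block-matrix computation, with the structural input supplied entirely by Proposition \ref{propsimFl}.

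For the forward implication, suppose $T$ is similar to $\tilde T$, so that there is a bounded invertible operator $L\colon \mathcal H_0\oplus\mathcal H_1\to\tilde{\mathcal H}_0\oplus\tilde{\mathcal H}_1$ with $LT=\tilde T L$. Since both $T$ and $\tilde T$ lie in $\mathcal FB_2(\D)$, Proposition \ref{propsimFl} applies and tells us that \emph{both} $L$ and $L^{-1}$ are upper triangular with respect to the decompositions $\mathcal H_0\oplus\mathcal H_1$ and $\tilde{\mathcal H}_0\oplus\tilde{\mathcal H}_1$. Thus I may write $L=\left(\begin{smallmatrix} X & Z \\ 0 & Y \end{smallmatrix}\right)$ with $X\colon\mathcal H_0\to\tilde{\mathcal H}_0$, $Z\colon\mathcal H_1\to\tilde{\mathcal H}_0$, $Y\colon\mathcal H_1\to\tilde{\mathcal H}_1$, and likewise $L^{-1}=\left(\begin{smallmatrix} A & B \\ 0 & C \end{smallmatrix}\right)$. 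Expanding $LL^{-1}=I$ and $L^{-1}L=I$ in block form and comparing the diagonal entries yields $XA=AX=I$ and $YC=CY=I$, so $X$ and $Y$ are invertible. This is exactly the step where the upper-triangularity of $L^{-1}$, and not merely of $L$, is needed: in infinite dimensions the invertibility of $L$ alone does not force the corner blocks to be invertible, so Proposition \ref{propsimFl} is doing the real work here.

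Having fixed the block form, I would simply expand the identity $LT=\tilde T L$. The $(1,1)$ and $(2,2)$ entries give $X T_0=\tilde T_0 X$ and $Y T_1=\tilde T_1 Y$, which is condition (i), while the $(1,2)$ entry gives $X S+Z T_1=\tilde T_0 Z+\tilde S Y$, which is condition (ii). For the converse, given invertible $X,Y$ and a bounded $Z$ satisfying (i) and (ii), I would set $L=\left(\begin{smallmatrix} X & Z \\ 0 & Y \end{smallmatrix}\right)$; because $X$ and $Y$ are invertible, $L$ is bounded and invertible with inverse $\left(\begin{smallmatrix} X^{-1} & -X^{-1}ZY^{-1} \\ 0 & Y^{-1} \end{smallmatrix}\right)$. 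Re-running the same block multiplication, conditions (i) and (ii) are precisely the entrywise identities that assemble into $LT=\tilde T L$, so $L$ implements a similarity between $T$ and $\tilde T$.

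The computation is routine once Proposition \ref{propsimFl} is in hand, and I do not anticipate a genuine obstacle. The only point demanding care is the deduction that the diagonal blocks $X$ and $Y$ are invertible, which rests essentially on $L^{-1}$ being upper triangular as well; everything else is a direct matching of the $2\times 2$ block entries of $LT$ and $\tilde T L$.
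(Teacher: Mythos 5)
Your proof is correct and follows essentially the same route as the paper: Proposition \ref{propsimFl} forces the intertwiner and its inverse to be upper triangular, and the rest is block-matrix algebra matching the entries of $LT$ and $\tilde T L$, with the converse handled by the explicit triangular inverse. The only cosmetic difference is that where the paper invokes Lemma \ref{lemwtcomp} to conclude that the diagonal blocks $X$ and $Y$ are invertible, you obtain this directly by comparing the diagonal entries of $LL^{-1}=I$ and $L^{-1}L=I$, which amounts to re-proving that lemma in this special case.
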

\begin{proof}
Suppose that $T$ is similar to $\tilde T$. Let $A=\left(\begin{smallmatrix}
X&Z\\
W&Y\\
\end{smallmatrix}\right): 
{\mathcal H}_0\oplus {\mathcal H}_1\to
\tilde{\mathcal H}_0\oplus \tilde{\mathcal H}_1$ be an invertible operator such that $AT=\tilde T A.$ By Proposition $\ref{propsimFl}$,  $W=0.$ Also, the intertwining relation is equivalent to  
$$
\begin{pmatrix}
XT_0&XS+ZT_1\\
0&YT_1
\end{pmatrix}=
\begin{pmatrix}
\tilde{T}_0X&\tilde{T}_0 Z+\tilde S Y\\
0&\tilde{T}_1 Y
\end{pmatrix}.
$$
Applying Proposition $\ref{propsimFl}$ once again, we see that $A^{-1}$ is also upper triangular. Hence using Lemma $\ref{lemwtcomp}$, we conclude that $X$ and $Y$ are invertible.

Conversely, assume that there exist bounded 
invertible operators $X:\mathcal H_0\to\tilde {\mathcal H}_0$, $Y:\mathcal H_1\to\tilde {\mathcal H}_1$ and a linear operator 
$Z:\mathcal H_1\to\tilde {\mathcal H}_0$ satisfying $\rm (i)$ and $\rm (ii)$ of this Corollary. Let $A$ be the operator $\left(\begin{smallmatrix}
X&Z\\
0&Y
\end{smallmatrix}\right).$ 
It is easily verified that $A$ is invertible
and the intertwining relation $AT=\tilde T A$ holds.
\end{proof}
The following lemma appeared in the PhD thesis of Dayal Dash Purohit \cite{DDPthesis}, see also \cite{misra-reza}.
\begin{lemma}\label{lemautB_1D}
Let $T\in B(\mathcal H)$ be an operator in $B_1(\mathbb D)$ with $\sigma(T)
=\bar{\mathbb D}.$ Then the operator $\varphi(T)$ belongs to $B_1(\mathbb D)$ for all $\varphi$ in \rm M\"ob.
\end{lemma}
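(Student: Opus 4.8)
The plan is to reduce everything to a single operator factorization. First I would observe that $\varphi(T)$ is well defined and bounded: writing $\varphi(z)=e^{i\theta}\tfrac{z-a}{1-\bar a z}$, the only pole $z=1/\bar a$ satisfies $|1/\bar a|>1$, so $\varphi$ is holomorphic on a neighbourhood of $\overbar{\mathbb D}\supseteq \sigma(T)$ and the holomorphic functional calculus applies.

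The key step is the following factorization. Fix $w\in\mathbb D$ and set $v=\varphi^{-1}(w)$, which again lies in $\mathbb D$ since $\varphi\in\mb$ is an automorphism of the disc. A direct computation gives
$$\varphi(z)-w=\varphi(z)-\varphi(v)=(z-v)\,g(z),\qquad g(z)=\frac{e^{i\theta}(1-|a|^2)}{(1-\bar a z)(1-\bar a v)}.$$
The function $g$ is holomorphic on a neighbourhood of $\overbar{\mathbb D}$ and has no zeros there. Applying the multiplicativity of the functional calculus yields $\varphi(T)-w=(T-v)\,g(T)$, and since $g$ does not vanish on $\sigma(T)\subseteq\overbar{\mathbb D}$, the spectral mapping theorem gives $0\notin\sigma(g(T))$, so $g(T)$ is invertible. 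Note also that $g(T)$ commutes with $T-v$, both being functions of $T$.

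With the factorization in hand, the three defining properties of $B_1(\mathbb D)$ for $\varphi(T)$ follow formally from those of $T$. Since $g(T)$ is invertible and commutes with $T-v$, one has $\ker(\varphi(T)-w)=\ker(T-v)$ and $\ran(\varphi(T)-w)=\ran(T-v)$; hence $\dim\ker(\varphi(T)-w)=1$ and $\ran(\varphi(T)-w)$ is closed for every $w\in\mathbb D$. For the spanning condition, as $w$ runs over $\mathbb D$ the point $v=\varphi^{-1}(w)$ also runs over all of $\mathbb D$, so
$$\overbar{\bigvee}\{\ker(\varphi(T)-w):w\in\mathbb D\}=\overbar{\bigvee}\{\ker(T-v):v\in\mathbb D\}=\mathcal H.$$
This verifies that $\varphi(T)\in B_1(\mathbb D)$.

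The computation producing $g$ and the check that it is zero-free on $\overbar{\mathbb D}$ is the only point requiring any care; everything else is a consequence of the invertibility of $g(T)$. I would remark that the hypothesis $\sigma(T)=\overbar{\mathbb D}$ is in fact automatic, since each $w\in\mathbb D$ already lies in $\sigma(T)$ (as $T-w$ is not injective), forcing $\overbar{\mathbb D}\subseteq\sigma(T)$; what is genuinely used is only $\sigma(T)\subseteq\overbar{\mathbb D}$, which simultaneously guarantees that $\varphi(T)$ is defined and that $g(T)$ is invertible.
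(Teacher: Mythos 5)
Your proof is correct. Note, however, that the paper itself offers no proof of this lemma at all: it is quoted from Purohit's thesis \cite{DDPthesis} (see also \cite{misra-reza}), so there is no in-paper argument to compare against. Your factorization $\varphi(z)-\varphi(v)=(z-v)g(z)$ with $g(z)=e^{i\theta}(1-|a|^2)/\big((1-\bar a z)(1-\bar a v)\big)$ zero-free and holomorphic on a neighbourhood of $\overbar{\mathbb D}$ is the standard route to this fact, and every step checks out: multiplicativity of the functional calculus gives $\varphi(T)-w=g(T)(T-v)=(T-v)g(T)$, invertibility of $g(T)$ follows from the spectral mapping theorem, and then $\ker(\varphi(T)-w)=\ker(T-v)$ (injectivity of $g(T)$), $\ran(\varphi(T)-w)=\ran(T-v)$ (surjectivity of $g(T)$), and the spanning condition transfers because $\varphi^{-1}$ is a bijection of $\mathbb D$. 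Your closing remark is also accurate and worth keeping: for $T\in B_1(\mathbb D)$ every $w\in\mathbb D$ is an eigenvalue, so $\overbar{\mathbb D}\subseteq\sigma(T)$ automatically, and the substantive hypothesis is only the inclusion $\sigma(T)\subseteq\overbar{\mathbb D}$ needed for the functional calculus (and for $g(T)$ to be invertible).
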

\begin{lemma}\label{lemautFB_2D}
Let $T=\begin{pmatrix}
T_0&S\\
0&T_1
\end{pmatrix}$ be an operator in $\mathcal FB_2(\mathbb D)$
with $\sigma(T)=\sigma(T_0)=\sigma(T_1)=\bar{\mathbb D}$. Then the operator $\varphi(T)$ belongs to
$\mathcal FB_2(\mathbb D)$ for all $\varphi$ in $\mb$.
\end{lemma}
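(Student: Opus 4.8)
The plan is to write $\varphi(T)$ explicitly in block upper-triangular form via the holomorphic functional calculus and then check the three defining requirements of $\mathcal FB_2(\D)$ one at a time. Since $\varphi\in\mb$ has its only pole at $1/\bar a$, which lies outside $\bar{\D}=\sigma(T)$, the map $\varphi$ is holomorphic in a neighbourhood of $\sigma(T)$, so $\varphi(T)$ is well defined. Choosing a contour $\Gamma$ surrounding $\sigma(T)$ inside this neighbourhood and using the block form of the resolvent
\[
(\lambda-T)^{-1}=\begin{pmatrix}(\lambda-T_0)^{-1}&(\lambda-T_0)^{-1}S(\lambda-T_1)^{-1}\\0&(\lambda-T_1)^{-1}\end{pmatrix},
\]
I would read off that $\varphi(T)=\begin{pmatrix}\varphi(T_0)&\tilde S\\0&\varphi(T_1)\end{pmatrix}$, where $\tilde S=\tfrac{1}{2\pi i}\oint_\Gamma\varphi(\lambda)(\lambda-T_0)^{-1}S(\lambda-T_1)^{-1}\,d\lambda$. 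In particular $\varphi(T)$ is again block upper triangular.

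The key simplification comes from the intertwining hypothesis $T_0S=ST_1$, which gives $(\lambda-T_0)S=S(\lambda-T_1)$ and hence $S(\lambda-T_1)^{-1}=(\lambda-T_0)^{-1}S$ for $\lambda\in\Gamma$. Substituting this into the integral collapses the two resolvents, so that $\tilde S=\bigl(\tfrac{1}{2\pi i}\oint_\Gamma\varphi(\lambda)(\lambda-T_0)^{-2}\,d\lambda\bigr)S=\varphi'(T_0)S$, and the symmetric manipulation gives $\tilde S=S\varphi'(T_1)$ as well. The intertwining relation required of the off-diagonal entry, namely $\varphi(T_0)\tilde S=\tilde S\varphi(T_1)$, then follows formally: since $S$ intertwines $T_0$ and $T_1$ it intertwines all holomorphic functions of them, so $S\varphi(T_1)=\varphi(T_0)S$, whence $\tilde S\varphi(T_1)=\varphi'(T_0)S\varphi(T_1)=\varphi'(T_0)\varphi(T_0)S=\varphi(T_0)\varphi'(T_0)S=\varphi(T_0)\tilde S$, using that $\varphi(T_0)$ and $\varphi'(T_0)$ commute as functions of $T_0$.

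It remains to verify that the diagonal entries lie in $B_1(\D)$ and that $\tilde S\neq 0$. The first is immediate from Lemma \ref{lemautB_1D}, whose hypotheses are met because $\sigma(T_0)=\sigma(T_1)=\bar{\D}$ by assumption. The step I expect to be the crux is the non-vanishing of $\tilde S=\varphi'(T_0)S$. Here I would use that $\varphi\in\mb$ is a biholomorphism of $\D$, so $\varphi'$ has no zeros on $\bar{\D}=\sigma(T_0)$; by the spectral mapping theorem $\sigma(\varphi'(T_0))=\varphi'(\sigma(T_0))$ does not contain $0$, hence $\varphi'(T_0)$ is invertible. Since $S\neq 0$, it follows that $\tilde S=\varphi'(T_0)S\neq 0$. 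With all three conditions checked, $\varphi(T)\in\mathcal FB_2(\D)$, completing the argument.
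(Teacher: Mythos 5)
Your proposal is correct and follows essentially the same route as the paper: the paper's proof also writes $\varphi(T)=\begin{pmatrix}\varphi(T_0)&\varphi'(T_0)S\\ 0&\varphi(T_1)\end{pmatrix}$, notes the intertwining relation $\varphi(T_0)\varphi'(T_0)S=\varphi'(T_0)S\varphi(T_1)$ (calling this a ``routine verification'' using $T_0S=ST_1$), and then invokes Lemma \ref{lemautB_1D} for the diagonal entries. You have simply carried out that routine verification explicitly via the resolvent contour integral, and in addition you checked the non-vanishing of $\varphi'(T_0)S$ through the invertibility of $\varphi'(T_0)$ --- a condition required by the definition of $\mathcal FB_2(\D)$ that the paper leaves implicit, so this is a welcome completion rather than a deviation.
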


\begin{proof}
A routine verification, using the intertwining relation $T_0S=ST_1$, shows that
$$\varphi(T)=
\begin{psmallmatrix}
\varphi(T_0)&\varphi'(T_0)S\\
0& \varphi(T_1)
\end{psmallmatrix},\,\, \mbox{and}\,\,\varphi(T_0)\varphi'(T_0)S=\varphi'(T_0)S\varphi(T_1).$$
Also, by Lemma $\ref{lemautB_1D}$, the operators $\varphi(T_0)$ and $\varphi(T_1)$ belong to $B_1(\mathbb D)$. Hence the operator 
$\varphi(T)$ belongs to $\mathcal FB_2(\mathbb D)$. 
\end{proof}
%

The following corollary
is an immediate consequence of Corollary \ref{corFB_2weak} and Lemma \ref{lemautFB_2D}. 
\begin{corollary}\label{corweakhom}
Let $T=\begin{pmatrix}
T_0&S\\
0&T_1
\end{pmatrix}$ be an operator in $\mathcal FB_2(\mathbb D)$ with $\sigma(T)=\sigma(T_0)=\sigma(T_1)=\bar{\mathbb D}$. Then $T$ is weakly homogeneous if and only if for each $\varphi $ in $\mb$, there exist bounded invertible operators $X_{\varphi}:\mathcal H_0\to \mathcal H_0$, $Y_{\varphi}:\mathcal H_1\to \mathcal H_1$ and a bounded operator $Z_{\varphi}:\mathcal H_1 \to \mathcal H_0$ such that the followings hold: 
\begin{itemize}
\item[\rm {(i)}]$X_{\varphi} T_0=\varphi(T_0)X_{\varphi},~Y_{\varphi} T_1=\varphi(T_1)Y_{\varphi},$
\item[(ii)]$X_{\varphi}S+Z_{\varphi}T_1=\varphi(T_0)Z_{\varphi}+
\varphi'(T_0)S Y_{\varphi}.$
\end{itemize}
\end{corollary}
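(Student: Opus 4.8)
The plan is to reduce the weak homogeneity of $T$ to the similarity criterion of Corollary \ref{corFB_2weak}, applied to the pair consisting of $T$ and $\varphi(T)$. First I would recall that, by definition, $T$ is weakly homogeneous precisely when $\sigma(T)\subseteq\bar{\mathbb D}$ and $\varphi(T)$ is similar to $T$ for every $\varphi\in\mb$. The spectral requirement is immediate from the hypothesis $\sigma(T)=\bar{\mathbb D}$, and since each $\varphi_{\theta,a}$ has its only pole at $1/\bar{a}\notin\bar{\mathbb D}$, the operator $\varphi(T)$ is well defined by the holomorphic functional calculus for every $\varphi\in\mb$. Thus the corollary will follow once I characterize, for a fixed but arbitrary $\varphi$, exactly when $T$ is similar to $\varphi(T)$.

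Next I would invoke Lemma \ref{lemautFB_2D}: under the standing spectral hypotheses $\sigma(T)=\sigma(T_0)=\sigma(T_1)=\bar{\mathbb D}$, the operator $\varphi(T)$ again lies in $\mathcal FB_2(\mathbb D)$ and, as computed there, takes the explicit upper triangular form $\varphi(T)=\begin{psmallmatrix}\varphi(T_0)&\varphi'(T_0)S\\0&\varphi(T_1)\end{psmallmatrix}$. In particular both $T$ and $\tilde T:=\varphi(T)$ are members of $\mathcal FB_2(\mathbb D)$ acting on the same decomposition $\mathcal H_0\oplus\mathcal H_1$, so Corollary \ref{corFB_2weak} applies verbatim with the identifications $\tilde T_0=\varphi(T_0)$, $\tilde T_1=\varphi(T_1)$ and $\tilde S=\varphi'(T_0)S$.

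Finally I would substitute these identifications into the two conditions furnished by Corollary \ref{corFB_2weak}. Its condition $\rm(i)$, namely $XT_0=\tilde T_0X$ and $YT_1=\tilde T_1Y$, becomes $X_\varphi T_0=\varphi(T_0)X_\varphi$ and $Y_\varphi T_1=\varphi(T_1)Y_\varphi$, while its condition $\rm(ii)$, $XS+ZT_1=\tilde T_0Z+\tilde SY$, becomes $X_\varphi S+Z_\varphi T_1=\varphi(T_0)Z_\varphi+\varphi'(T_0)SY_\varphi$. Writing $X_\varphi,Y_\varphi,Z_\varphi$ for the operators produced (respectively required) at each $\varphi$, this is precisely the pair of conditions $\rm(i)$ and $\rm(ii)$ in the present corollary; since the equivalence holds for each individual $\varphi$, quantifying over all $\varphi\in\mb$ yields the stated biconditional. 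I do not expect a genuine obstacle here: the mathematical content is carried entirely by the two cited results, and the only point deserving a line of care is confirming that $\varphi(T)$ is defined and stays in $\mathcal FB_2(\mathbb D)$ for every $\varphi$, which is exactly what Lemma \ref{lemautFB_2D} supplies.
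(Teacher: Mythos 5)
Your proposal is correct and takes essentially the same route as the paper, which presents this corollary as an immediate consequence of Corollary \ref{corFB_2weak} and Lemma \ref{lemautFB_2D}: you use Lemma \ref{lemautFB_2D} to place $\varphi(T)$ in $\mathcal FB_2(\mathbb D)$ with the explicit form $\begin{psmallmatrix}\varphi(T_0)&\varphi'(T_0)S\\ 0&\varphi(T_1)\end{psmallmatrix}$, and then apply the similarity criterion of Corollary \ref{corFB_2weak} with $\tilde T_0=\varphi(T_0)$, $\tilde T_1=\varphi(T_1)$, $\tilde S=\varphi'(T_0)S$, exactly as intended.
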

\subsection{A useful Lemma}
%
Let $K_1, K_2:\mathbb D\times\mathbb D \to \mathbb C$ be two  positive definite kernels. Let $M^{(1)}$ and $M^{(2)}$ denote the operators of multiplication by the coordinate function $z$ on $(\hl, K_1)$ and $(\hl, K_2)$, respectively. Assume that $M^{(1)}$ and $M^{(2)}$ are bounded.  
The following lemma is a key ingredient in constructing operators in $\mathcal FB_2(\mathbb D)$ that are not weakly homogeneous.

\begin{lemma}\label{lemrosenbl}
Let $\varphi$ be a fixed but arbitrary function in $\mb$ which is analytic in a neighbourhood of $\sigma(M^{(1)})$, and let $\psi$ be a function in ${\rm Hol}(\mathbb D)$ such that the weighted composition operator $M_{\psi}C_{\varphi^{-1}}$ is bounded from $(\hl, K_1)$ to $(\hl, K_2)$.
If $X$ is a bounded linear operator from $(\mathcal H,K_1)$ 
to $(\mathcal H,K_2)$ of the form  
$X(f)=\psi (\varphi^{-1})'(f'\circ\varphi^{-1})+
\chi(f\circ\varphi^{-1})$, $f\in (\hl, K_1)$ for some $\chi\in {\rm Hol}
(\mathbb D)$, then $X$ satisfies  
\begin{equation}\label{eqndiff}
X\varphi(M^{(1)})-M^{(2)} X = M_{\psi} C_{\varphi^{-1}}.
\end{equation}
Moreover, if $K_1$ is sharp, then the converse of the above statement is also true, that is, if $X:(\hl, K_1)\to (\hl, K_2)$ is a bounded linear operator satisfying \eqref{eqndiff}, then $X$ is of the form 
$X(f)= \psi (\varphi^{-1})'(f'\circ\varphi^{-1})+
\chi (f\circ \varphi^{-1})$, $f\in (\hl, K_1)$,
for some $\chi\in\mbox{\rm Hol}(\mathbb D)$.
%
\end{lemma}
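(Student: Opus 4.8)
The plan is to treat the two implications separately. For the forward direction I would first note that, since $\varphi$ is analytic near $\sigma(M^{(1)})$, the holomorphic functional calculus gives $\varphi(M^{(1)})=M_\varphi$, multiplication by $\varphi$ on $(\hl,K_1)$. Then I would simply apply the candidate $X$ to $\varphi f$, expand $(\varphi f)'\circ\varphi^{-1}$ by the product rule, and use the two elementary identities $\varphi\circ\varphi^{-1}=\mathrm{id}$ and $(\varphi^{-1})'\,(\varphi'\circ\varphi^{-1})=1$. After subtracting $M^{(2)}Xf=z\,Xf$, the two derivative terms cancel and one is left precisely with $\psi\,(f\circ\varphi^{-1})=M_\psi C_{\varphi^{-1}}f$, which is \eqref{eqndiff}. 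This is a routine computation with no real obstacle.

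For the converse, assuming $K_1$ is sharp, I would pass to adjoints. Writing $g_w:=X^*K_2(\cdot,w)$ and using $M^{(2)*}K_2(\cdot,w)=\bar w\,K_2(\cdot,w)$, the eigenvector relation $M_\varphi^*K_1(\cdot,v)=\overline{\varphi(v)}K_1(\cdot,v)$, and the identity $(M_\psi C_{\varphi^{-1}})^*K_2(\cdot,w)=\overline{\psi(w)}\,K_1(\cdot,\varphi^{-1}(w))$ (computed as in \eqref{eqnwtcomkernelfns}), the adjoint of \eqref{eqndiff} evaluated at $K_2(\cdot,w)$ collapses to the single equation $(M_\varphi^*-\bar w)g_w=\overline{\psi(w)}\,K_1(\cdot,\varphi^{-1}(w))$, where $v:=\varphi^{-1}(w)$. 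I would then solve this generalized eigenvector equation explicitly: differentiating $M_\varphi^*K_1(\cdot,v)=\overline{\varphi(v)}K_1(\cdot,v)$ with respect to $\bar v$ (the kernel map being anti-holomorphic into $\hl$, so $\bar\partial_v K_1(\cdot,v)\in(\hl,K_1)$ and $M_\varphi^*$ passes through the limit) gives $(M_\varphi^*-\bar w)\bar\partial_v K_1(\cdot,v)=\overline{\varphi'(v)}K_1(\cdot,v)$. Hence $\tfrac{\overline{\psi(w)}}{\overline{\varphi'(v)}}\bar\partial_v K_1(\cdot,v)$ is a particular solution, and since $K_1$ is sharp the homogeneous space $\ker(M_\varphi^*-\bar w)=\ker(M_z^*-\bar v)$ is spanned by $K_1(\cdot,v)$, so $g_w=\tfrac{\overline{\psi(w)}}{\overline{\varphi'(v)}}\bar\partial_v K_1(\cdot,v)+c(w)K_1(\cdot,v)$ for some scalar $c(w)$.

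Next I would recover the action of $X$ by pairing with $f$: using $\langle f,K_1(\cdot,v)\rangle=f(v)$ and the differentiated reproducing identity $\langle f,\bar\partial_v K_1(\cdot,v)\rangle=f'(v)$, together with $(\varphi^{-1})'(w)=1/\varphi'(v)$, this yields $(Xf)(w)=\psi(w)(\varphi^{-1})'(w)\,f'(\varphi^{-1}(w))+\overline{c(w)}\,f(\varphi^{-1}(w))$. Comparison with the asserted form forces the choice $\chi:=\overline{c}$, so the only thing left is to prove that $\chi$ is holomorphic, equivalently that $c$ is anti-holomorphic.

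This last regularity claim is where the argument needs the most care, and I regard it as the main obstacle. The plan is to observe that both $w\mapsto X^*K_2(\cdot,w)$ and $w\mapsto \tfrac{\overline{\psi(w)}}{\overline{\varphi'(v)}}\bar\partial_v K_1(\cdot,v)$ are anti-holomorphic $\hl$-valued maps (the latter because $\varphi^{-1}$ is holomorphic while $\bar\partial_v K_1$ depends anti-holomorphically on $v$), so their difference $c(w)K_1(\cdot,\varphi^{-1}(w))$ is anti-holomorphic in $w$. Fixing $w_0\in\D$ and pairing against the vector $K_1(\cdot,\varphi^{-1}(w_0))$ shows that the scalar $c(w)\,K_1(\varphi^{-1}(w_0),\varphi^{-1}(w))$ is anti-holomorphic near $w_0$; dividing by $K_1(\varphi^{-1}(w_0),\varphi^{-1}(w))$, which is anti-holomorphic and non-zero near $w_0$ since $K_1(\varphi^{-1}(w_0),\varphi^{-1}(w_0))>0$, shows $c$ is anti-holomorphic near each $w_0$ and hence on all of $\D$. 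Setting $\chi=\overline{c}\in\mathrm{Hol}(\D)$ then gives $X$ exactly the stated form, completing the proof. The subtleties to watch are the justification that $\bar\partial_v K_1(\cdot,v)$ lies in $(\hl,K_1)$ and varies anti-holomorphically, and the clean identification of $\chi$ as $\overline{c}$.
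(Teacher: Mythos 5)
Your proposal is correct and follows essentially the same route as the paper's proof: the same direct computation (via $\varphi(M^{(1)})=M_\varphi$ and $(\varphi^{-1})'(\varphi'\circ\varphi^{-1})=1$) for the forward implication, and, for the converse, the same passage to adjoints on kernel functions, the same differentiated eigenvector identity giving the particular solution $\tfrac{\overline{\psi(z)}}{\overline{\varphi'(\varphi^{-1}z)}}\bar\partial K_1(\cdot,\varphi^{-1}z)$, the same use of sharpness to see that the homogeneous solutions are multiples of $K_1(\cdot,\varphi^{-1}z)$, and the same pairing $\langle f,\bar\partial K_1(\cdot,\varphi^{-1}z)\rangle=(f'\circ\varphi^{-1})(z)$ to recover $X$. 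Your evaluate-at-$\varphi^{-1}(w_0)$-and-divide argument for the anti-holomorphicity of the coefficient $c$ is exactly the paper's argument too (the paper simply defers it to the end of Lemma \ref{weakhomlem}), so there are no gaps.
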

\noindent(Here $\psi (\varphi^{-1})^\prime$ denotes the pointwise product of the two functions $\psi$ and $(\varphi^{-1})^\prime$. Similarly, $\psi (\varphi^{-1})^\prime (f'\circ\varphi^{-1})$ denotes the pointwise product of $\psi (\varphi^{-1})^\prime$ and $(f'\circ\varphi^{-1})$. Finally, $\chi(f\circ \varphi^{-1})$ is the pointwise product of $\chi$ and $f\circ \varphi^{-1}$. This convention is adopted throughout this paper.)
\begin{proof}
Suppose that $X$ is bounded linear operator taking $f$ to $\psi (\varphi^{-1})'(f'\circ\varphi^{-1})+
\chi(f\circ\varphi^{-1})$, $f\in (\hl, K_1)$.
Then we see that
\begin{align*}
(X\varphi(M^{(1)})-M^{(2)} X)f=& X(\varphi f)-z Xf\\
= & \psi (\varphi^{-1})'((\varphi f)'\circ\varphi^{-1})+\chi((\varphi f)\circ 
\varphi^{-1})-z\psi (\varphi^{-1})'(f'\circ\varphi^{-1})-z\chi(f\circ 
\varphi^{-1})\\
= & \psi(\varphi^{-1})'\big( (\varphi'\circ \varphi^{-1})(f\circ\varphi^{-1})
+(\varphi\circ\varphi^{-1}) (f'\circ \varphi^{-1})\big)
+z\chi(f\circ\varphi^{-1})\\
&\quad\quad\quad\quad\quad\quad\quad\quad\quad\quad\quad\quad\quad\quad\quad\quad-z\psi (\varphi^{-1})'(f'\circ\varphi^{-1})-z\chi (f\circ\varphi^{-1})\\
= & \psi(\varphi^{-1})'(\varphi'\circ \varphi^{-1})(f\circ\varphi^{-1})+ \psi(\varphi^{-1})'z(f'\circ \varphi^{-1})-z\psi (\varphi^{-1})'(f'\circ\varphi^{-1})\\
= & \psi(f\circ \varphi^{-1}). 
\end{align*} 
Here for the last equality we have used the identity $(\varphi^{-1})'(\varphi'\circ \varphi^{-1})=1$.

For the converse, assume that $K_1$ is sharp and $X:(\hl, K_1) \to (\hl,K_2)$ is a bounded linear operator
satisfying $\eqref{eqndiff}.$ Then taking adjoint and acting 
on $K_2(\cdot,z)$, $z\in \mathbb D$, 
we obtain
\begin{align}\label{eqndiff1}
\begin{split}
\varphi(M^{(1)})^*X^*K_2(\cdot,z)-\overbar{z}X^*K_2(\cdot,z)
&=(M_{\psi} C_{\varphi^{-1}})^*K_2(\cdot,z)\\
&=\overbar{\psi(z)}K_1(\cdot,\varphi^{-1}z).
\end{split}
\end{align}  
Here the last equality follows from exactly the same argument as in \eqref{eqnwtcomkernelfns}.
Furthermore, since $\big(\varphi(M^{(1)})^*-\overbar{\varphi(w)}\big)K_1(\cdot,w)=0$, 
$w\in \mathbb D$, differentiating with respect to $\overbar{w}$, we see  that 
\begin{equation}
\big(\varphi(M^{(1)})^*-\overbar{\varphi(w)}\big)\bar\partial K_1(\cdot,w)
=\overbar{\varphi'(w)}K_1(\cdot,w),\;\;w \in \mathbb D.
\end{equation}
Replacing $w$ by $\varphi^{-1}z$ 
in the above equation and combining it with 
$\eqref{eqndiff1}$, we see that 
\begin{equation}
\big(\varphi(M^{(1)})^*-\overbar{z}\big)X^*K_2(\cdot,z)
=\big(\varphi(M^{(1)})^*-\overbar{z}\big)
\left(\frac{\overbar{\psi(z)}}{\overbar{\varphi'(\varphi^{-1}z)}}
\bar\partial K_1(\cdot,\varphi^{-1}z)\right).
\end{equation}
Consequently, the vector $X^*K_2(\cdot,z)-\tfrac{\overbar{\psi(z)}}{\overbar{\varphi'(\varphi^{-1}z)}}
\bar\partial K_1(\cdot,\varphi^{-1}z)\in \ker \big(\varphi(M^{(1)})^*-\overbar{z}\big)$.
Since $K_1$ is sharp, we have that $\ker \big(\varphi(M^{(1)})^*-\bar{z}\big)=
\bigvee \{K_1(\cdot,\varphi^{-1}z)\}$ (see the proof of Lemma \ref{weakhomlem}). Therefore 
$$X^*K_2(\cdot,z)-\frac{\overbar{\psi(z)}}{\overbar{\varphi'
(\varphi^{-1}z)}}\bar\partial K_1
(\cdot,\varphi^{-1}z)= \overbar{\chi(z)} K_1(\cdot,\varphi^{-1}z),$$
for some $\chi\in \mbox{Hol}(\mathbb D)$ (the holomorphicity of $\chi$ can be proved by a similar argument used at the end of Lemma \ref{weakhomlem}).

Finally, for $f\in (\mathcal H, K_1)$ and $z\in \mathbb D$, we see that
\begin{align*}
\big(Xf\big)(z)& =\left\langle Xf, K_2(\cdot,z)\right\rangle\\
&=\left\langle f, X^*K_2(\cdot,z)\right\rangle\\
&=\left\langle f, \frac{\overbar{\psi(z)}}
{\overbar{\varphi'(\varphi^{-1}z)}}
\bar\partial K_1(\cdot,\varphi^{-1}z)+\overbar{\chi(z)} K_1(\cdot,\varphi^{-1}z)\right\rangle\\
& =\psi(z) (\varphi^{-1})'(z)(f'\circ\varphi^{-1})(z)
+\chi(z)(f\circ \varphi^{-1})(z),
\end{align*}
where the equality $\left\langle f, \bar{\partial} K_1(\cdot,\varphi^{-1}z)\right\rangle = (f'\circ\varphi^{-1})(z)$ follows from \cite[Lemma 4.1]{Curtosalinas}.
This completes the proof.
\end{proof}

\begin{notation}
Recall that $\mathcal H^{(\lambda)}$, $\lambda>0$, denote the Hilbert space determined by the positive definite kernel $K^{(\lambda)}$, where $K^{(\lambda)}(z,w):=\frac{1}{(1-z\bar{w})^{\lambda}}$, $z,w\in \mathbb D$. 
Note that 
\begin{equation}\label
{eqnK^lambda}
K^{(\lambda)}(z,w)=\sum_{n=0}^\infty \frac{(\lambda)_n}{n!} (z\bar{w})^n,\;\;z,w\in \D,
\end{equation} 
where $(\lambda)_n$ is the Pochhammer symbol given by $\frac{\Gamma(\lambda+n)}{\Gamma(\lambda)}$.

For any $\gamma\in \mathbb R$, let $K_{(\gamma)}$ be the positive definite kernel given by 
\begin{equation}\label{eqnK_nu}
K_{(\gamma)}(z,w):=\sum_{n=0}^\infty (n+1)^{\gamma} (z\bar{w})^n, z, w \in \mathbb D.
\end{equation}
Note that $K_{(0)}=K^{(1)}$ (the Szeg\"o kernel of the unit disc $\D$) and $K_{(1)}=K^{(2)}$(the Bergman kernel of the unit disc $\D$). 
The kernel $K_{(-1)}$ is known as the Dirichlet kernel of the unit disc $\D$ and the Hilbert space $(\hl, K_{(-1)})$ is known as the Dirichlet space. 

For two sequences $\{a_n\}$ and $\{b_n\}$ of positive real numbers, we write $a_n \sim b_n$ if there exist constants $c_1, c_2>0$
such that $c_1 b_n\leq a_n\leq c_2 b_n$ for all $n\in \z$.  From \eqref{eqnK^lambda} and \eqref{eqnK_nu}, it is clear that $\|z^n\|^2_{\mathcal H^{(\lambda)}}=\frac{n!}{(\lambda)_n}=
\frac{\Gamma(n+1)\Gamma(\lambda)}{\Gamma(n+\lambda)}$ and $\|z^n\|^2_{(\hl, K_{(\gamma)})}=(n+1)^{-\gamma}$, $n\in \z$. 
 Using the identity $\lim_{n\to \infty}\frac{\Gamma(n+a)}{\Gamma(n)n^a}=1$, $a\in \mathbb C$, we see that
 \begin{equation}\label{eqnlambdnu}
\|z^n\|^2_{\mathcal H^{(\lambda)}} \sim \|z^n\|^2_{(\hl, K_{(\lambda-1)})}~~ ~\mbox{for all}~~\lambda>0.
\end{equation}
\end{notation}

Recall that a Hilbert space $\mathcal H$ consisting of holomorphic functions on the unit disc $\D$ is said to be M\"obius invariant if for each $\varphi\in\mb$, $f\circ\varphi \in \mathcal H$ whenever $f\in \mathcal H$. By an application of the closed graph Theorem, it follows that $\hl$ is M\"obius invariant if and only if the composition operator $C_{\varphi}$ is bounded on $\hl$ for each $\varphi\in \mb$. If the 
multiplication operator $M_z$ is bounded on some M\"obius invariant Hilbert space $\hl$, then by Proposition \ref{propweakhomo}, it follows that $M_z$ is weakly homogeneous on $\hl$.
It is known that the Hilbert spaces $\hl^{(\lambda)}$, $\lambda>0$, and
$(\hl, K_{(\gamma)})$, $\gamma\in \mathbb R$, are M\"obius invariant (see \cite{Zorboska}, \cite{cowen-maccluer}). 
%
We record this fact as a lemma.
\begin{lemma}\label{lemmobinv}
The Hilbert spaces $\hll$, $\lambda>0$, and $(\hl, K_{(\gamma)})$, $\gamma\in \mathbb R$, are M\"obius invariant. Consequently, 
the composition operator $C_{\varphi}$ is bounded and invertible on $\hll$, $\lambda>0$, as well as on $(\hl, K_{(\gamma)})$, $\gamma\in \mathbb R,$ for all  $\varphi\in \mb$.
\end{lemma}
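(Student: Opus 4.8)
The plan is to reduce both families to a single, conformally robust description of the norm, proved by a monomial computation, and then to transfer between them using \eqref{eqnlambdnu}. First I establish, for each real $\gamma$ and any integer $k$ large enough that $2k+\gamma>0$, a derivative characterization of the norm on $(\hl,K_{(\gamma)})$: writing $dA$ for planar Lebesgue measure on $\D$,
$$\|f\|^2_{(\hl,K_{(\gamma)})}\ \sim\ \sum_{j=0}^{k-1}|f^{(j)}(0)|^2+\int_{\D}|f^{(k)}(z)|^2(1-|z|^2)^{2k-1+\gamma}\,dA(z)$$
with constants independent of $f$. Since the monomials are mutually orthogonal both for $\|\cdot\|_{(\hl,K_{(\gamma)})}$ and for the displayed integral (angular integration kills the cross terms), this reduces to a single estimate on $z^n$: the Beta integral $\int_{\D}|z|^{2m}(1-|z|^2)^s\,dA=\pi\,\Gamma(m+1)\Gamma(s+1)/\Gamma(m+s+2)$ together with the asymptotic $\Gamma(n+a)/\Gamma(n)\sim n^a$ already recorded in the excerpt shows that the integral term for $z^n$ grows like $n^{2k-1-s}=n^{-\gamma}\sim\|z^n\|^2_{(\hl,K_{(\gamma)})}$ exactly when $s=2k-1+\gamma$; the choice $2k+\gamma>0$ makes $s+1>0$, so the integrals converge.

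Next I prove that $C_{\varphi}$ is bounded on $(\hl,K_{(\gamma)})$ for every $\varphi\in\mb$. Applying the Fa\`a di Bruno formula \eqref{dibruno} to $(f\circ\varphi)^{(k)}$ writes it as $\sum_{j=1}^k(f^{(j)}\circ\varphi)\,B_{k,j}(\varphi',\dots)$, and since $\varphi$ is an automorphism all the Bell-polynomial coefficients are bounded on $\D$; hence the integral term for $f\circ\varphi$ is $\lesssim\sum_{j=1}^k\int_{\D}|f^{(j)}(\varphi(z))|^2(1-|z|^2)^{2k-1+\gamma}\,dA(z)$. Changing variables $\zeta=\varphi(z)$ and using the Schwarz--Pick identity $1-|\varphi^{-1}(\zeta)|^2=|(\varphi^{-1})'(\zeta)|\,(1-|\zeta|^2)$ collects the weight together with the Jacobian into a power of $|(\varphi^{-1})'|$ that is bounded above on $\D$, turning each summand into $\int_{\D}|f^{(j)}(\zeta)|^2(1-|\zeta|^2)^{2k-1+\gamma}\,dA(\zeta)$ up to a bounded factor. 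A monomial comparison as in the previous step then gives $\int_{\D}|f^{(j)}(\zeta)|^2(1-|\zeta|^2)^{2k-1+\gamma}\,dA\lesssim\|f\|^2_{(\hl,K_{(\gamma)})}$ for every $1\le j\le k$, since passing to a lower order $j\le k$ only decreases the power of $n$ produced by the Beta integral. Combined with $\sum_{j<k}|(f\circ\varphi)^{(j)}(0)|^2\lesssim\|f\|^2$, which holds because the functionals $f\mapsto f^{(i)}(\varphi(0))$ are bounded in the reproducing kernel Hilbert space, this yields $\|f\circ\varphi\|_{(\hl,K_{(\gamma)})}\le C_\varphi\|f\|_{(\hl,K_{(\gamma)})}$. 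Replacing $\varphi$ by $\varphi^{-1}\in\mb$ shows $C_{\varphi}^{-1}=C_{\varphi^{-1}}$ is bounded, so $C_\varphi$ is invertible; thus $(\hl,K_{(\gamma)})$ is M\"obius invariant for every $\gamma\in\mathbb R$.

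It remains to treat $\hll$, $\lambda>0$. Here I invoke \eqref{eqnlambdnu}, namely $\|z^n\|^2_{\hll}\sim\|z^n\|^2_{(\hl,K_{(\lambda-1)})}$. Because the monomials form orthogonal bases of both Hilbert spaces and their norms are comparable, the two spaces consist of exactly the same holomorphic functions and carry equivalent norms; in particular the identity is a bounded invertible operator between them. Transporting the boundedness and invertibility of $C_\varphi$ on $(\hl,K_{(\lambda-1)})$ through this identification shows $C_\varphi$ is bounded and invertible on $\hll$ as well, completing the proof.

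The main obstacle is the second step: the integral seminorm is not exactly conformally invariant, so the Fa\`a di Bruno expansion produces lower-order terms $f^{(j)}$, $j<k$, that must each be reabsorbed into $\|f\|^2$. The two points to get right are that the extra factor $(1-|z|^2)^{2(k-j)}$ accompanying such a term can only help, as it lowers the growth of the monomial integrals, and that the change of variables is controlled uniformly because $\varphi$ is an automorphism, for which $|\varphi'|$, $|(\varphi^{-1})'|$ and their reciprocals are bounded on $\D$ while the Schwarz--Pick identity converts the weight exactly. Once these estimates are in place the argument is uniform in $\gamma\in\mathbb R$, which is precisely what lets a single computation cover the Dirichlet case $\gamma=-1$ along with all the others.
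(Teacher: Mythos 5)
Your argument is correct, but it is genuinely different from what the paper does: the paper offers no proof of this lemma at all, recording it as a known fact with citations to Zorboska (composition operators on $S_a$ spaces) and the Cowen--MacCluer book. Your proof is a complete, self-contained substitute, and each step checks out: the equivalence $\|f\|^2_{(\hl,K_{(\gamma)})}\sim\sum_{j<k}|f^{(j)}(0)|^2+\int_{\D}|f^{(k)}(z)|^2(1-|z|^2)^{2k-1+\gamma}\,dA(z)$ for $2k+\gamma>0$ is legitimate once proved on monomials via the Beta integral and extended by radial orthogonality (with the usual monotone-convergence justification for termwise integration); the Fa\`a di Bruno expansion \eqref{dibruno} has Bell-polynomial coefficients bounded on $\D$ because every derivative of a M\"obius map extends past $\bar{\D}$; the change of variables with the Schwarz--Pick identity packages Jacobian and weight into the factor $|(\varphi^{-1})'|^{2k+1+\gamma}$, which is bounded since $|(\varphi^{-1})'|$ is bounded above and below on $\D$; the reabsorption of lower-order terms is sound because the monomial growth $n^{2j-2k-\gamma}\le n^{-\gamma}$ for $j\le k$; the terms $(f\circ\varphi)^{(j)}(0)$ are controlled by boundedness of derivative evaluations in a reproducing kernel Hilbert space (cf. the Curto--Salinas lemma the paper already cites); and invertibility comes for free from $C_\varphi^{-1}=C_{\varphi^{-1}}$ with $\varphi^{-1}\in\mb$. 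The transfer to $\hll$ via \eqref{eqnlambdnu} is also valid, since both spaces are diagonal (monomials form orthogonal bases) so comparable monomial norms force equality of the spaces with equivalent norms. What your route buys is uniformity and self-containedness: a single computation covers every $\gamma\in\mathbb R$ (in particular the Dirichlet case $\gamma=-1$) and every $\lambda>0$. What the paper's citation buys is brevity, plus access to the stronger results of the cited works, which treat composition operators with non-automorphic symbols as well.
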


\begin{corollary}\label{cormobinv}
For any $\gamma\in \mathbb R$, the operator $M_z^*$ on $(\hl, K_{(\gamma)})$ is a weakly homogeneous operator in $B_1(\D)$. 
Moreover, it is similar to a homogeneous operator if and only if  $\gamma > -1$. In particular, $M_z^*$
on the Dirichlet space is a weakly homogeneous operator which is not similar to any homogeneous operator.
\end{corollary}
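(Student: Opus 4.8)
The plan is to prove Corollary~\ref{cormobinv} in three parts. First, the weak homogeneity: by Lemma~\ref{lemmobinv}, the Hilbert space $(\hl, K_{(\gamma)})$ is M\"obius invariant, so $C_\varphi$ is bounded and invertible for every $\varphi\in\mb$. Since $M_z$ is bounded on $(\hl, K_{(\gamma)})$ (the weights $(n+1)^{-\gamma}$ give a bounded shift), the discussion preceding Lemma~\ref{lemmobinv} shows via Proposition~\ref{propweakhomo} (take $g_\varphi\equiv 1$, so that $M_{g_\varphi}C_{\varphi^{-1}}=C_{\varphi^{-1}}$ is bounded and invertible) that $M_z$ on $(\hl, K_{(\gamma)})$ is weakly homogeneous. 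As noted in the introduction, an operator is weakly homogeneous if and only if its adjoint is, so $M_z^*$ is weakly homogeneous. Moreover $M_z^*$ lies in $B_1(\D)$ because $K_{(\gamma)}$ is a scalar positive definite kernel, placing it in the Cowen--Douglas class as discussed after the definition of $B_k(\Omega)$.

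\smallskip

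For the equivalence ``similar to a homogeneous operator $\iff \gamma>-1$,'' I would exploit the growth comparison~\eqref{eqnlambdnu}, which says $\|z^n\|^2_{\hll}\sim\|z^n\|^2_{(\hl,K_{(\lambda-1)})}$. The key observation is that two weighted shifts with weight sequences that are comparable in the $\sim$ sense are similar operators: if $\|z^n\|^2_{(\hl,K_{(\gamma)})}\sim\|z^n\|^2_{\hll}$, then the diagonal operator sending one orthonormal basis to the other is bounded with bounded inverse and intertwines the two multiplication operators, hence the two $M_z^*$ are similar. Setting $\lambda=\gamma+1$, relation~\eqref{eqnlambdnu} gives $\|z^n\|^2_{(\hl,K_{(\gamma)})}\sim\|z^n\|^2_{\mathcal H^{(\gamma+1)}}$ precisely when $\gamma+1>0$, i.e. $\gamma>-1$. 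In that range $M_z^*$ on $(\hl,K_{(\gamma)})$ is similar to $M_z^*$ on $\mathcal H^{(\gamma+1)}$, which is the homogeneous operator recorded earlier (the adjoint of $M_z$ on $\hll$, $\lambda>0$, is homogeneous by \cite{curvandbackwardshift}). This settles the ``if'' direction.

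\smallskip

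The ``only if'' direction is where the main obstacle lies. I must show that for $\gamma\le -1$ the operator $M_z^*$ on $(\hl,K_{(\gamma)})$ is \emph{not} similar to any homogeneous operator in $B_1(\D)$. Every homogeneous operator in $B_1(\D)$ is unitarily equivalent to $M_z^*$ on some $\hll$ with $\lambda>0$, whose weight sequence satisfies $\|z^n\|^2_{\hll}\sim n^{-(\lambda-1)}$ by the Gamma-function asymptotics, i.e. it has polynomial growth/decay with exponent $\lambda-1>-1$. So I would argue that similarity of two weighted backward shifts forces their weight sequences to be comparable (for instance via the fact that similarity preserves the resolvent growth, or more directly that an intertwining invertible operator between two diagonalizable-by-monomials shifts must, after using that the monomials are joint eigenvectors of $M_z^*$ up to the shift structure, respect the weight ratios). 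Since for $\gamma\le -1$ we have $\|z^n\|^2_{(\hl,K_{(\gamma)})}=(n+1)^{-\gamma}\sim n^{-\gamma}$ with exponent $-\gamma\ge 1>\lambda-1$ for every admissible $\lambda>0$, no homogeneous $\mathcal H^{(\lambda)}$ can have a comparable weight sequence, ruling out similarity. The delicate point to get right is the claim that similarity of these operators genuinely forces $\sim$-comparability of weights rather than something weaker; I would establish this by examining the action of the intertwining operator on the reproducing-kernel eigenvectors $K(\cdot,w)$ of $M_z^*$ and tracking norm growth as $|w|\to 1$, or equivalently by comparing the two curvatures/metrics of the associated line bundles, which are complete similarity invariants up to the flexibility that similarity (rather than unitary equivalence) allows. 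Specializing $\gamma=-1$ gives the Dirichlet space conclusion: $M_z^*$ on $(\hl,K_{(-1)})$ is weakly homogeneous but not similar to any homogeneous operator.
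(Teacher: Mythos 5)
Your overall route coincides with the paper's: M\"obius invariance plus Proposition \ref{propweakhomo} for weak homogeneity, and the comparison \eqref{eqnlambdnu} of the weight sequences to decide similarity. The weak homogeneity part and the ``if'' direction ($\gamma>-1$ implies similar to $M_z^*$ on $\mathcal H^{(\gamma+1)}$) are correct --- your observation that comparable weight sequences make the identity map a bounded invertible intertwiner is exactly the easy half of \cite[Theorem $2^\prime$]{Shields}, which the paper invokes. However, there are two genuine gaps. First, your justification of the claim $M_z^*\in B_1(\D)$ is a converse-direction error: the model theorem quoted after the definition of $B_k(\Omega)$ says every operator \emph{in} $B_1(\D)$ is unitarily equivalent to some $M_z^*$ on a reproducing kernel Hilbert space, not that every such $M_z^*$ lies in $B_1(\D)$. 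One must verify that $\ran(M_z^*-\bar w)$ is closed, that the eigenspaces are one-dimensional (sharpness), and that the eigenvectors span; the paper does this by computing $r_1(M_z)=r(M_z)=1$ and citing Seddighi's theorem. This is not cosmetic, because your ``only if'' argument silently reduces to homogeneous operators in $B_1(\D)$, and that reduction needs both that $M_z^*\in B_1(\D)$ and that $B_1(\D)$ is closed under similarity (a step you do not state).

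Second, the ``delicate point'' you flag --- that similarity of the two shifts forces $\sim$-comparability of $\|z^n\|^2$ --- is left unproved, and the strategies you sketch do not close it. In particular, curvature is a complete \emph{unitary} invariant for $B_1(\D)$, not a similarity invariant; the failure of curvature-type conditions to characterize similarity is precisely the phenomenon studied in \cite{similaritywtshifts}, so ``comparing the two curvatures'' cannot by itself rule out similarity. Likewise ``similarity preserves resolvent growth'' does not yield the pointwise comparability of weight products that you need. The statement you require is exactly the hard half of \cite[Theorem $2^\prime$]{Shields}: two injective unilateral weighted shifts are similar if and only if the ratios of the products of their weights (equivalently, the ratios $\|z^n\|^2_{1}/\|z^n\|^2_{2}$) are bounded above and below. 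The paper's proof consists of citing this theorem and then reading off $\gamma=\lambda-1$ from \eqref{eqnlambdnu}; without that citation or a proof of it, your ``only if'' direction (and hence the Dirichlet-space conclusion) is incomplete.
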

\begin{proof}
Note that $M_z$ on $(\hl, K_{(\gamma)})$ is unitarily equivalent to the weighted shift with the weight sequence $\{w_n\}_{n\in \z}$, where $w_n=\Big(\frac{n+1}{n+2}\Big)^{\frac{\gamma}{2}}$, $n\in\z$. Since $\sup_{n\in \z} w_n<\infty$, it follows that $M_z$ on $(\hl, K_{(\gamma)})$ is bounded. Thus by Lemma \ref{lemmobinv} and 
Proposition \ref{propweakhomo}, $M_z$ on $(\hl, K_{(\gamma)})$ is weakly homogeneous . 

Recall that for an operator $T$, $r_1(T)$ is defined as $\lim_{n\to \infty} \big(m(T^n)\big)^{\frac{1}{n}}$ (which always exists,  
see \cite{Shields}), where $m(T)=\inf \big\{\|Tf\|:\|f\|=1\big\}$. 
For the multiplication operator $M_z$ on $(\hl, K_{(\gamma)})$, it is easily verified that $r_1(M_z)=r(M_z)=1$, where $r(M_z)$ is the spectral radius of $M_z$. Hence, by a theorem of Seddighi (cf. \cite{Seddighi}), we conclude that $M_z^*$ on $(\hl, K_{(\gamma)})$ belongs to $B_1(\D)$.

Finally, assume that $M_z^*$ on $(\hl, K_{(\gamma)})$ is similar to a homogeneous operator, say $S$. Since $B_1(\D)$ is closed under similarity, the  operator $S$ belongs to $B_1(\D)$. Also, since upto unitary equivalence, every homogeneous operator in $B_1(\D)$ is of the form $M_z^*$ on $(\hl, K^{(\lambda)})$, $\lambda>0$, it follows that   $M_z^*$ on $(\hl, K_{(\gamma)})$ is similar to $M_z^*$ on $\hll$ for some $\lambda>0$, and therefore, by  \cite[Theorem $2^\prime$]{Shields}, we have $\|z^n\|^2_{(\hl, K_{(\gamma)})}\sim \|z^n\|^2_{\hll}$. Then, by \eqref{eqnlambdnu}, 
$\|z^n\|^2_{(\hl, K_{(\gamma)})}\sim \|z^n\|^2_{(\hl, K_{(\lambda-1)})}$. Hence $\gamma=\lambda-1$. Since $\lambda>0$, it follows that $\gamma>-1$.
For the converse, let $\gamma>-1$. Again using \cite[Theorem $2^\prime$]{Shields} and \eqref{eqnlambdnu}, it follows that $M_z^*$ on $(\hl, K_{(\gamma)})$
is similar to the homogeneous operator $M_z^*$ on  $\hl^{(\gamma+1)}$.
\end{proof}

The lemma given below shows that the linear map $f\mapsto f^\prime$ is bounded from $\hll$ to $\hl^{(\lambda+2)}$.
\begin{lemma}\label{lemdiffbdd}
Let $\lambda>0$, and let $f\in \rm Hol(\D)$. 
Then $f\in \hll$ 
if and only if $f^\prime \in \hl^{(\lambda+2)}$. Moreover, $\|f^\prime\|_{\hl^{(\lambda+2)}}\leq \sqrt{\lambda(\lambda+1)}\|f\|_{\hll}$, $f\in \hll$. Consequently, the differential operator $D$ that maps $f$ to $f^\prime$ is bounded from 
$\hll$ to $\hl^{(\lambda+2)}$ with $\|D\|\leq \sqrt{\lambda(\lambda+1)}$. 
\end{lemma}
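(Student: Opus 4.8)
The plan is to work directly with the orthogonal basis of monomials $\{z^n\}_{n\in\z}$, for which the norms in each space are known explicitly. From the notation block, $\|z^n\|^2_{\hll}=\tfrac{n!}{(\lambda)_n}$, and the monomials are mutually orthogonal in every $\hll$. Writing $f=\sum_{n=0}^\infty a_n z^n$, I would first record that $f\in\hll$ precisely when $\sum_{n}|a_n|^2\tfrac{n!}{(\lambda)_n}<\infty$, this sum being $\|f\|^2_{\hll}$, and that $f'=\sum_{n=1}^\infty n a_n z^{n-1}$. After the index shift $n\mapsto n-1$ one gets
\begin{equation*}
\|f'\|^2_{\hl^{(\lambda+2)}}=\sum_{n=1}^\infty n^2|a_n|^2\frac{(n-1)!}{(\lambda+2)_{n-1}}.
\end{equation*}
Both norms are then weighted $\ell^2$ sums over the same coefficient sequence $\{a_n\}$, so the entire lemma reduces to a termwise comparison of the two weight sequences.

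Next I would compute, for each $n\geq 1$, the ratio of the weight of $|a_n|^2$ in $\|f'\|^2_{\hl^{(\lambda+2)}}$ to its weight in $\|f\|^2_{\hll}$. The one identity that makes everything collapse is the Pochhammer cancellation
\begin{equation*}
\frac{(\lambda)_n}{(\lambda+2)_{n-1}}=\frac{\lambda(\lambda+1)}{\lambda+n},\qquad n\geq 1,
\end{equation*}
obtained by writing $(\lambda)_n=\lambda(\lambda+1)\cdots(\lambda+n-1)$ and $(\lambda+2)_{n-1}=(\lambda+2)\cdots(\lambda+n)$ and cancelling the common factors $(\lambda+2)\cdots(\lambda+n-1)$. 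Using this together with $n!=n\,(n-1)!$, the ratio of weights is
\begin{equation*}
\frac{n^2\,(n-1)!/(\lambda+2)_{n-1}}{n!/(\lambda)_n}=n\,\frac{(\lambda)_n}{(\lambda+2)_{n-1}}=\frac{n}{\lambda+n}\,\lambda(\lambda+1).
\end{equation*}

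Since $0<\tfrac{n}{\lambda+n}<1$ for every $n\geq 1$ and $\lambda>0$, each weight of $f'$ is at most $\lambda(\lambda+1)$ times the corresponding weight of $f$; summing gives the termwise domination $\|f'\|^2_{\hl^{(\lambda+2)}}\leq\lambda(\lambda+1)\|f\|^2_{\hll}$, hence the asserted bound $\|f'\|_{\hl^{(\lambda+2)}}\leq\sqrt{\lambda(\lambda+1)}\|f\|_{\hll}$ and the boundedness of $D$ with $\|D\|\leq\sqrt{\lambda(\lambda+1)}$. For the "if and only if", I would note that $\tfrac{n}{\lambda+n}$ is increasing in $n$, so the same ratio is bounded below by its value $\lambda$ at $n=1$; thus $\lambda\,|a_n|^2\tfrac{n!}{(\lambda)_n}\leq n^2|a_n|^2\tfrac{(n-1)!}{(\lambda+2)_{n-1}}$ for all $n\geq 1$, and (the $n=0$ term being trivially finite) convergence of either series forces convergence of the other, yielding both implications.

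There is no genuine obstacle beyond bookkeeping: the only points to watch are the index shift $n\mapsto n-1$ in passing to $f'$ and the small cases $n=1,2$ of the Pochhammer cancellation, where $(\lambda+2)\cdots(\lambda+n-1)$ is an empty product and the identity must be checked directly. I would also remark that $\sup_{n\geq 1}\tfrac{n}{\lambda+n}=1$ is not attained, so $\sqrt{\lambda(\lambda+1)}$ is the sharp operator-norm bound even though no extremal $f$ exists.
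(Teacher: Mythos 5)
Your proof is correct and follows essentially the same route as the paper: both expand $f$ in the orthogonal basis of monomials and compare the weights of $|a_n|^2$ termwise, with your exact ratio $\tfrac{n}{\lambda+n}\,\lambda(\lambda+1)$ being precisely the content of the paper's displayed double inequality $\lambda\|z^{n+1}\|^2_{\hll}\leq (n+1)^2\|z^n\|^2_{\hl^{(\lambda+2)}}\leq \lambda(\lambda+1)\|z^{n+1}\|^2_{\hll}$, whose lower constant $\lambda$ likewise yields the "if" direction. Your closing observation that the bound $\sqrt{\lambda(\lambda+1)}$ is the exact (unattained) operator norm is a correct refinement not stated in the paper.
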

\begin{proof}
Let $\sum_{n=0}^\infty \alpha_n z^n$, $z\in \D$, be the power series representation of $f$. Then we have $f^\prime (z)=\sum_{n=0}^\infty (n+1)\alpha_{n+1}z^n$, $z\in \D$.
Note that $\|z^n\|^2_{\mathcal H^{(\lambda)}}=\frac{n!}{\lambda(\lambda+1)\cdots(\lambda+n-1)}$ for all $n\geq 0$. By a straightforward computation, we see that 
\begin{align*}
\lambda\|z^{n+1}\|^2_{\hll}\leq (n+1)^2\|z^n\|^2_{\hl^{(\lambda+2)}}& 
\leq \lambda(\lambda+1)\|z^{n+1}\|^2_{\hll},\;\;n\geq 0.
\end{align*}
Consequently, we have 
\begin{align}\label{eqndifhll}
\begin{split}
\lambda\sum_{n=0}^\infty |{\alpha}_{n+1}|^2\|z^{n+1}\|_{\hl^{(\lambda)}}^2& \leq  \sum_{n=0}^\infty |{\alpha}_{n+1}|^2(n+1)^2\|z^n\|_{\hl^{(\lambda+2)}}^2\\ 
&\leq \lambda(\lambda+1) \sum_{n=0}^\infty|\alpha_{n+1}|^2\|z^{n+1}\|^2_{\hll}.
\end{split}
\end{align}
Therefore,  
$\sum_{n=0}^\infty|\alpha_n|^2\|z^n\|^2_{\hll} <\infty$ if and only if 
$\sum_{n=0}^\infty |{\alpha}_{n+1}|^2(n+1)^2\|z^n\|_{\hl^{(\lambda+2)}}^2 < \infty$. Hence $f\in \hl^{(\lambda)}$ if and only if $f^\prime\in \hl^{(\lambda+2)}$.
The rest of the proof follows from \eqref{eqndifhll}.
\end{proof}
The proof of the corollary given below follows from  Lemma \ref{lemdiffbdd} together with the fact that the inclusion operator $f\mapsto f$ is bounded from $\hl^{(\lambda+2)}$ to $\hlm$ whenever $\mu-\lambda\geq 2$.
\begin{corollary}\label{corderrl}
Let $\lambda, \mu$ be two positive real numbers such that $\mu-\lambda\geq 2$. Then 
the linear map $f\mapsto f^{\prime}$ is bounded from $\hll$ to $\hlm$.
\end{corollary}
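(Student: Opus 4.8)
The plan is to factor the map $f \mapsto f'$ from $\hll$ to $\hlm$ through the intermediate space $\hl^{(\lambda+2)}$, exactly as suggested by the sentence preceding the statement. By Lemma \ref{lemdiffbdd}, the differential operator $D\colon f \mapsto f'$ is already bounded from $\hll$ to $\hl^{(\lambda+2)}$, with $\|D\| \le \sqrt{\lambda(\lambda+1)}$. Hence it suffices to show that the inclusion operator $\iota\colon \hl^{(\lambda+2)} \to \hlm$ is bounded whenever $\mu - \lambda \ge 2$; the operator in question is then the composition $\iota \circ D$, which is bounded as a composition of bounded operators.

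To verify that $\iota$ is bounded, I would compare the norms of the monomials in the two spaces. From \eqref{eqnK^lambda} one has $\|z^n\|^2_{\hl^{(\sigma)}} = n!/(\sigma)_n$ for every $\sigma > 0$ and $n \in \z$, where $(\sigma)_n$ denotes the Pochhammer symbol. Taking $\sigma = \lambda+2$, the ratio of the monomial norms is
\begin{equation*}
\frac{\|z^n\|^2_{\hlm}}{\|z^n\|^2_{\hl^{(\lambda+2)}}} = \frac{(\lambda+2)_n}{(\mu)_n} = \prod_{k=0}^{n-1} \frac{\lambda+2+k}{\mu+k}.
\end{equation*}
Since $\mu \ge \lambda + 2$, each factor in this product is at most $1$, so $\|z^n\|^2_{\hlm} \le \|z^n\|^2_{\hl^{(\lambda+2)}}$ for all $n \in \z$.

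Because the monomials $\{z^n\}$ are mutually orthogonal in each space $\hl^{(\sigma)}$, this termwise comparison upgrades directly to the whole space: for $f = \sum_n \alpha_n z^n$ one gets $\|f\|^2_{\hlm} = \sum_n |\alpha_n|^2 \|z^n\|^2_{\hlm} \le \sum_n |\alpha_n|^2 \|z^n\|^2_{\hl^{(\lambda+2)}} = \|f\|^2_{\hl^{(\lambda+2)}}$, so in particular $f \in \hlm$ whenever $f \in \hl^{(\lambda+2)}$, and $\|\iota\| \le 1$. Combining this with Lemma \ref{lemdiffbdd} yields $\|f'\|_{\hlm} \le \|f'\|_{\hl^{(\lambda+2)}} \le \sqrt{\lambda(\lambda+1)}\,\|f\|_{\hll}$ for all $f \in \hll$, which establishes the boundedness of $f \mapsto f'$ from $\hll$ to $\hlm$.

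I do not anticipate any serious obstacle here: the only genuine content is the monotonicity of $\sigma \mapsto \|z^n\|^2_{\hl^{(\sigma)}} = n!/(\sigma)_n$, which is decreasing in $\sigma$ for each fixed $n$ and makes $\hl^{(\lambda+2)} \hookrightarrow \hlm$ a contraction. The one point to keep straight is the direction of the monotonicity, namely that enlarging the parameter ($\mu \ge \lambda+2$) shrinks the monomial norms and thereby enlarges the space, so that the embedding runs in the correct direction.
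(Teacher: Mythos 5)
Your proposal is correct and follows exactly the route the paper takes: factor $f \mapsto f'$ as the composition of the differentiation operator $D\colon \hll \to \hl^{(\lambda+2)}$ from Lemma \ref{lemdiffbdd} with the inclusion $\hl^{(\lambda+2)} \hookrightarrow \hlm$, which is bounded when $\mu - \lambda \geq 2$. The only difference is that the paper cites the boundedness of the inclusion as a known fact, whereas you verify it explicitly via the monomial-norm comparison $\|z^n\|^2_{\hl^{(\sigma)}} = n!/(\sigma)_n$; that verification is accurate and harmless.
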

\begin{lemma}\label{lemstrongir}
Let $\lambda, \mu$ be two positive real numbers such that $\mu-\lambda<2$,  
and let $\psi$, $\chi \in {\rm Hol}(\D)$.
Let $X$ be the linear map given by $X(f)=\psi f'+\chi f$, $f\in {\rm Hol}(\D)$. If $X$ is bounded from $\mathcal H^{(\lambda)}$ to $\mathcal H^{(\mu)}$, then $\psi$ is identically zero.
\end{lemma}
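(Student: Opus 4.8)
The plan is to argue by contradiction: assume $\psi \not\equiv 0$ and show that the boundedness of $X$ forces $\mu-\lambda\geq 2$, contradicting the hypothesis. The guiding heuristic is that the map $f\mapsto \psi f'$ behaves like differentiation, which by Lemma \ref{lemdiffbdd} and Corollary \ref{corderrl} is bounded from $\hll$ to $\hlm$ exactly at the threshold $\mu-\lambda=2$; below this threshold the derivative part is too unbounded to be compensated by the lower-order term $\chi f$.

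First I would test $X$ on the monomials $z^n\in\hll$, $n\in\z$. Writing $\psi(z)=\sum_{k\geq 0}a_kz^k$ and $\chi(z)=\sum_{j\geq 0}b_jz^j$, a direct computation gives
$$X(z^n)=n\psi z^{n-1}+\chi z^n=\sum_{k\geq 0}n a_k z^{n-1+k}+\sum_{j\geq 0}b_j z^{n+j}.$$
Since $\psi\not\equiv 0$, let $k_0$ be the smallest index with $a_{k_0}\neq 0$. Then the coefficient of $z^{n-1+k_0}$ in $X(z^n)$ equals $n a_{k_0}+b_{k_0-1}$ (with the convention $b_{-1}=0$), and the factor $n$ arising from differentiation is precisely the source of the growth.

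Next, using that the monomials form an orthogonal basis of $\hlm$, I would discard all but this single term to obtain
$$\|X(z^n)\|^2_{\hlm}\geq |n a_{k_0}+b_{k_0-1}|^2\,\|z^{n-1+k_0}\|^2_{\hlm}.$$
Combining the boundedness inequality $\|X(z^n)\|^2_{\hlm}\leq \|X\|^2\|z^n\|^2_{\hll}$ with the asymptotics $\|z^n\|^2_{\hll}\sim (n+1)^{1-\lambda}$ and $\|z^n\|^2_{\hlm}\sim (n+1)^{1-\mu}$ that follow from \eqref{eqnlambdnu} and the identity $\|z^n\|^2_{(\hl,K_{(\gamma)})}=(n+1)^{-\gamma}$, and noting that $|n a_{k_0}+b_{k_0-1}|\geq \tfrac{1}{2}|a_{k_0}|\,n$ for all large $n$, I would arrive at an inequality of the form $c\,n^{3-\mu}\leq \|X\|^2 C\,(n+1)^{1-\lambda}$ valid for all large $n$ and some positive constants $c,C$. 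Letting $n\to\infty$ forces $3-\mu\leq 1-\lambda$, that is $\mu-\lambda\geq 2$, contradicting $\mu-\lambda<2$; hence $\psi\equiv 0$.

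The computation is essentially routine, and the only point requiring care is to ensure that the multiplication term $\chi f$ cannot cancel the leading growth produced by $\psi f'$. This is exactly why I isolate the single Taylor coefficient of $z^{n-1+k_0}$: there the contribution of $\psi$ carries the factor $n$ while the contribution of $\chi$ is the fixed constant $b_{k_0-1}$, so the former dominates as $n\to\infty$. I expect this bookkeeping of the lowest-order nonzero coefficient of $\psi$ to be the only genuinely delicate step.
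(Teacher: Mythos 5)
Your proposal is correct and follows essentially the same route as the paper: both test $X$ on the monomials $z^n$, use orthogonality in $\hlm$ to isolate Taylor coefficients, and compare the asymptotics $\|z^n\|^2_{\hll}\sim n^{1-\lambda}$, $\|z^n\|^2_{\hlm}\sim n^{1-\mu}$ to conclude. The only cosmetic difference is that the paper applies this estimate to every coefficient $\alpha_j$ of $\psi$ directly (showing each one vanishes), while you run a contradiction using only the lowest nonzero coefficient $a_{k_0}$; the underlying computation is identical.
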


\begin{proof}
Let $\psi(z)=\sum_{j=0}^\infty {\alpha}_j z^j$ and 
$\chi(z)=\sum_{j=0}^\infty {\beta}_j z^j$ be the power series representations of $\psi$ and $\chi$, respectively. Then for $n\geq 1$, we see that
\begin{align*}
\|X(z^n)\|^2_{\hlm} &=\|nz^{n-1}\psi(z)+z^n \chi(z)\|^2_{\hlm}\\
& =\|nz^{n-1}{\alpha}_0+\sum_{j=1}
^\infty (n {\alpha}_j+{\beta}_{j-1})z^{j+n-1}\|^2_{\hlm}\\
&=|{\alpha}_0|^2 n^2\|z^{n-1}\|^2_{\hlm}+\sum_{j=1}
^\infty |n {\alpha}_j+{\beta}_{j-1}|^2 \|z^{j+n-1}\|^2_{\hlm}.
\end{align*}
Since $X$ is bounded from $\hll$ to $\hlm$, we have that $\|X(z^n)\|^2_{\hlm}
\leq \|X\|^2 \|z^n\|^2_{\hll}.$ Consequently, for 
$n\geq 1$, 
\begin{equation}\label{eqnstrongir}
|{\alpha}_0|^2 n^2\|z^{n-1}\|^2_{\hlm}+\sum_{j=1}
^\infty |n {\alpha}_j+{\beta}_{j-1}|^2 \|z^{j+n-1}\|^2_{\hlm} \leq \|X\|^2 \|z^n\|^2_{\hll}.
\end{equation}

From \eqref{eqnlambdnu}, we have
$\|z^n\|^2_{\mathcal H^{(\lambda)}}
\sim n^{-(\lambda-1)}$ 
and $\|z^n\|^2_{\mathcal H^{(\mu})}\sim n^{-(\mu-1)}$. Thus, by \eqref{eqnstrongir}, there exists a constant $c>0$ such that  
$|{\alpha}_0|^2n^{-(\mu-3)} \leq c  n^{-(\lambda-1)}$. Equivalently, $|\alpha_0|^2 \leq c n^{\mu-\lambda-2}.$
Since $\mu-\lambda-2<0,$ taking limit as $n\to \infty $, we obtain 
${\alpha}_0=0.$

For $j\geq 1$, using $\|z^{j+n-1}\|^2_{\hlm}\sim (j+n-1)^{-(\mu-1)}
\sim n^{-(\mu-1)}$ in \eqref{eqnstrongir},
we see that 
$$\left|{\alpha}_j+\frac{\beta_{j-1}}{n}\right|^2\leq d n^{(\mu-1)-(\lambda-1)-2}=
d n^{(\mu-\lambda-2)}$$
for some constant $d>0.$ As before, since $\mu-\lambda-2<0$, 
taking $n\to \infty $, we obtain 
${\alpha}_j=0$ for $j\geq 1$. Hence $\psi$ is identically zero, completing the proof of the lemma.
\end{proof}

From Lemma \ref{lemstrongir},  the converse to the statement in Corollary  \ref{corderrl} follows and consequently, we have the following proposition.
\begin{proposition}\label{cordiffiff} 
The linear map $f\mapsto f^\prime$, $f\in {\rm Hol}(\D)$, is bounded from $\hll$ to $\hlm$ if and only if $\mu-\lambda\geq 2$.
\end{proposition}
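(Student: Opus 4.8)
The goal is to prove that the differentiation map $f \mapsto f'$ is bounded from $\hll$ to $\hlm$ \emph{if and only if} $\mu - \lambda \geq 2$. The claim packages together two implications, and the excerpt has already done most of the work for each direction, so the plan is essentially to assemble the pieces.

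The forward direction ($\mu - \lambda \geq 2 \Rightarrow$ bounded) is precisely the content of Corollary \ref{corderrl}, which I would simply cite: Lemma \ref{lemdiffbdd} gives boundedness of $D$ from $\hll$ to $\hl^{(\lambda+2)}$, and composing with the bounded inclusion $\hl^{(\lambda+2)} \hookrightarrow \hlm$ (valid since $\mu - (\lambda+2) \geq 0$, so the weight sequences satisfy $\|z^n\|^2_{\hlm} \le \|z^n\|^2_{\hl^{(\lambda+2)}}$) yields the conclusion. No new computation is needed here.

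For the converse I would prove the contrapositive: if $\mu - \lambda < 2$, then $D$ is \emph{not} bounded from $\hll$ to $\hlm$. This is exactly where Lemma \ref{lemstrongir} applies, with the special choice $\psi \equiv 1$ and $\chi \equiv 0$, so that the operator $X(f) = \psi f' + \chi f$ becomes $X = D$. Lemma \ref{lemstrongir} asserts that any bounded $X$ of this form from $\hll$ to $\hlm$ under the hypothesis $\mu - \lambda < 2$ must have $\psi$ identically zero. Since our $\psi \equiv 1 \neq 0$, the operator $D$ cannot be bounded. This immediately gives the contrapositive and hence the remaining implication.

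Combining the two directions completes the proof. There is no genuine obstacle, as the substantive estimates have all been established in the preceding lemmas; the only point to state carefully is the specialization in the converse, namely that Lemma \ref{lemstrongir} with $\psi \equiv 1$, $\chi \equiv 0$ forces a contradiction unless $\mu - \lambda \geq 2$. I would therefore write the proof as a short two-sentence synthesis, one sentence invoking Corollary \ref{corderrl} for sufficiency and one invoking Lemma \ref{lemstrongir} for necessity.
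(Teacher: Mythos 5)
Your proposal is correct and follows exactly the paper's route: the paper obtains this proposition by combining Corollary \ref{corderrl} (sufficiency, via Lemma \ref{lemdiffbdd} and the inclusion $\hl^{(\lambda+2)}\hookrightarrow\hlm$) with Lemma \ref{lemstrongir} applied as you describe, taking $\psi\equiv 1$ and $\chi\equiv 0$ so that the boundedness of $D$ when $\mu-\lambda<2$ would force $\psi\equiv 0$, a contradiction. Your write-up merely makes explicit the specialization that the paper leaves implicit in the sentence preceding the proposition.
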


Recall that for two Hilbert spaces $\mathcal H_1$ and $\mathcal H_2$ consisting of holomorphic functions on the unit disc $\D$, the multiplier algebra ${\rm Mult}(\hl_1,\hl_2)$ is defined as
$${\rm Mult} (\hl_1, \hl_2):=\left\{ \psi\in \mbox{\rm Hol}(\D):\psi f\in \hl_2 \mbox{~whenever~} f\in \hl_1\right\}.$$ When 
$\mathcal H_1=\mathcal H_2$, we write ${\rm Mult}(\hl_1)$ instead of ${\rm Mult} (\hl_1, \hl_1)$.
By the closed graph theorem, it is easy to see that $\psi \in {\rm Mult}(\hl_1, \hl_2)$ if and only if the multiplication operator $M_{\psi}$ is bounded from $\hl_1$ to $\hl_2$. 

For $\mu\geq \lambda>0$, since $\hll\subseteq \hlm$, it follows that $\psi f \in \hlm$ whenever $f\in \hll$ and $\psi\in {\rm Mult}(\hll)$. Hence
\begin{equation}\label{eqnmult_}
{\rm Mult}(\hll)\subseteq {\rm Mult}(\hll, \hlm),\;\;0<\lambda\leq \mu.
\end{equation}
It is known that for $\lambda\geq 1$, ${\rm Mult}(\hll)= H^\infty (\mathbb D)$, 
where $ H^\infty (\mathbb D)$ is the algebra of all bounded holomorphic functions on the unit disc $\D$. Thus, from \eqref{eqnmult_}, we conclude that 
\begin{equation}\label{eqnmultlm}
 H^\infty (\mathbb D)\subseteq  {\rm Mult} (\hll,\hlm),\;\; 1\leq \lambda\leq \mu.
\end{equation}
On the other hand, if $\lambda > \mu$, then ${\rm Mult}(\hll,\hlm) = \{0\}$, and hence we make the assumption $\lambda \leq \mu$ without loss of generality. 

The proposition given below describes a class a weakly homogeneous operators in $\mathcal FB_2(\mathbb D)$.
\begin{proposition}\label{thmweakhominFB_2}
Let $0< \lambda\leq \mu$ and $\psi \in {\rm Mult}(\hll, \hlm)$. Let $T=\begin{pmatrix}
M_z^* & M_\psi^*\\
0 & M_z^*
\end{pmatrix}
$ on $\hll\oplus \hlm$. If $M_{\psi}$ is bounded and invertible on $\hll$ as well as on $\hlm$, then $T$ is weakly homogeneous. 
\end{proposition}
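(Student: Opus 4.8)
The plan is to realize $T$, up to unitary equivalence, as $M_z^*$ on a single $\mathcal M_2(\mathbb C)$-valued reproducing kernel Hilbert space and then to hand-produce, for every $\varphi\in\mb$, an invertible \emph{matrix}-weighted composition operator on that space, so that Proposition \ref{propweakhomo} applies directly. One first records that $T\in\mathcal FB_2(\D)$: the diagonal entries $M_z^*$ on $\hll$ and $\hlm$ are homogeneous operators in $B_1(\D)$, $M_\psi^*\neq0$, and $M_z^*M_\psi^*=M_\psi^*M_z^*$ since $M_z$ and $M_\psi$ commute as multiplications. Consequently $\sigma(T)=\overbar{\D}$ and the kernel attached to $T\in B_2(\D)$ is a positive definite $\mathcal M_2(\mathbb C)$-valued kernel, as required by Proposition \ref{propweakhomo}.

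The crucial device is the linear bijection
\[
\Lambda:\hll\oplus\hlm\longrightarrow \mathrm{Hol}(\D,\mathbb C^2),\qquad \Lambda(f,g)=\big(f,\;f'-g/\psi\big),
\]
which is well defined because $\psi$ is zero-free (a consequence of the invertibility of $M_\psi$), with inverse $(h_1,h_2)\mapsto(h_1,\psi(h_1'-h_2))$. Writing $T^*=\left(\begin{smallmatrix}M_z&0\\ M_\psi&M_z\end{smallmatrix}\right)$, a one-line computation gives the intertwining identity $\Lambda T^*=M_z\Lambda$, where on the right $M_z$ denotes multiplication by the coordinate $z$ acting diagonally on $\mathbb C^2$-valued functions. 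Transporting the Hilbert space norm through $\Lambda$ turns its image into a reproducing kernel Hilbert space $\hk$ (point evaluations stay bounded, since $f(w)$, $f'(w)$, $g(w)$ are bounded functionals and $\psi(w)\neq0$) on which $M_z$ is unitarily equivalent to $T^*$. Hence it suffices to prove that $M_z$ on $\hk$ is weakly homogeneous, for then $T^*\cong M_z$ is weakly homogeneous and therefore so is $T$.

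To invoke Proposition \ref{propweakhomo} I would exhibit, for each $\varphi\in\mb$, the weight
\[
g_\varphi(z)=\begin{pmatrix}1&0\\ 0&(\varphi^{-1})'(z)\end{pmatrix}\in \mathrm{Hol}\big(\D,GL_2(\mathbb C)\big),
\]
which is invertible since $\det g_\varphi=(\varphi^{-1})'$ is zero-free, and then check that $M_{g_\varphi}C_{\varphi^{-1}}$ is bounded and invertible on $\hk$. The computation to carry out is that conjugating this operator back through $\Lambda$ yields, after the two first-derivative contributions cancel, the \emph{block-diagonal} operator
\[
\Lambda^{-1}M_{g_\varphi}C_{\varphi^{-1}}\Lambda\,(f,g)=\Big(C_{\varphi^{-1}}f,\;M_{c_\varphi}C_{\varphi^{-1}}g\Big),\qquad c_\varphi=\frac{\psi\,(\varphi^{-1})'}{\psi\circ\varphi^{-1}}.
\]
The first block is bounded and invertible on $\hll$ by M\"obius invariance (Lemma \ref{lemmobinv}); the second is bounded and invertible on $\hlm$ because $c_\varphi$ is a zero-free multiplier of $\hlm$. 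The latter uses that $1/\psi$, and hence $1/(\psi\circ\varphi^{-1})$, are multipliers of $\hlm$ (invertibility of $M_\psi$ together with the identity $M_{m\circ\varphi^{-1}}=C_{\varphi^{-1}}M_mC_{\varphi}$ showing that composition with $\varphi^{-1}$ preserves multipliers), while $(\varphi^{-1})'$ is holomorphic on a neighbourhood of $\overbar{\D}$ and so multiplies $\hlm$. Thus $M_{g_\varphi}C_{\varphi^{-1}}$ is bounded and invertible on $\hk$, and Proposition \ref{propweakhomo} finishes the argument.

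The main obstacle, and the reason a genuinely matrix-valued weight is unavoidable, is that for $\mu-\lambda<2$ the plain composition operator $C_\varphi$ (i.e.\ the weight $g_\varphi=I$) fails to be bounded on $\hk$: transporting it by $\Lambda$ leaves an uncancelled term of the form $\psi\,(\varphi^{-1})'\,(f'\circ\varphi^{-1})$, which reduces, modulo bounded invertible factors, to the differentiation map $\hll\to\hlm$, unbounded in exactly this range by Proposition \ref{cordiffiff}. The factor $(\varphi^{-1})'$ placed in the lower-right entry of $g_\varphi$ is precisely what cancels this derivative term and renders the transported operator block diagonal; verifying this cancellation and the invertible-multiplier property of $c_\varphi$ for all $\varphi\in\mb$ is the delicate part. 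The hypothesis $\psi\in\mathrm{Mult}(\hll,\hlm)$ is used to guarantee that $S=M_\psi^*$, and hence $T$, is bounded, while the invertibility of $M_\psi$ on both spaces is what makes $\Lambda$ a bijection and keeps the weights $g_\varphi$ and symbols $c_\varphi$ invertible. (Alternatively, one can package the same data as the intertwiners $X_\varphi,Y_\varphi,Z_\varphi$ of Corollary \ref{corweakhom}, but the realization above avoids solving the overlapping-spectra Sylvester equation by hand.)
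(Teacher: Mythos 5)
Your argument is correct in substance, but it takes a genuinely different route from the paper's. The paper never leaves the decomposition $\hll\oplus\hlm$: for each $\varphi\in\mb$ it writes down the block-diagonal operator $L_{\varphi}=\mathrm{diag}\big(M_{(\psi\circ \varphi^{-1})(\varphi'\circ \varphi^{-1})}C_{\varphi^{-1}},\,M_{\psi}C_{\varphi^{-1}}\big)$ and verifies the intertwining $T^*L_{\varphi}=L_{\varphi}\varphi(T^*)$ by direct computation, so Proposition \ref{propweakhomo} is not invoked at all; only Lemma \ref{lemmobinv} and multiplier facts are needed. You instead conjugate by $\Lambda(f,g)=(f,\,f'-g/\psi)$ to realize $T^*$ as $M_z$ on a matrix-valued reproducing kernel space and then produce a single $GL_2$-valued weight, letting Proposition \ref{propweakhomo} do the bookkeeping. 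The two constructions are closer than they look: your transported operator $\Lambda^{-1}M_{g_\varphi}C_{\varphi^{-1}}\Lambda=\mathrm{diag}\big(C_{\varphi^{-1}},\,M_{c_\varphi}C_{\varphi^{-1}}\big)$ and the paper's $L_\varphi$ differ exactly by the factor $\mathrm{diag}(M_m,M_m)$ with $m=(\psi\circ\varphi^{-1})(\varphi'\circ\varphi^{-1})$, which commutes with $T^*$, so the analytic content (M\"obius invariance of $\hll,\hlm$; $\psi$, $1/\psi$, $\psi\circ\varphi^{-1}$ and the derivative of a M\"obius map being multipliers) is identical. What your route buys is conceptual: the Sylvester-type condition (ii) of Corollary \ref{corweakhom} is solved with $Z_\varphi=0$ automatically, and the origin of the weights is transparent. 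What it costs is an extra verification you omitted: to quote Proposition \ref{propweakhomo} as stated you must check that the transported kernel is positive definite (your remark about the kernel attached to $T\in B_2(\D)$ is beside the point, since you never use that realization). This is true and easy --- the joint evaluation $(f,g)\mapsto\big(f(w_i),\,f'(w_i)-g(w_i)/\psi(w_i)\big)_{i=1}^l$ is surjective onto $(\mathbb C^2)^l$ by polynomial interpolation, using $\psi(w_i)\neq0$ --- but it should be said.

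Two justifications need tightening. First, invertibility of $M_{c_\varphi}$ does not follow from $c_\varphi$ being a ``zero-free multiplier'' (the function $1-z$ is a zero-free multiplier of the Hardy space whose multiplication operator is not invertible); what is needed, and what your subsequent remarks essentially supply, is that $1/c_\varphi=(\psi\circ\varphi^{-1})\cdot(1/\psi)\cdot(\varphi'\circ\varphi^{-1})$ is again a multiplier of $\hlm$: here $\psi\circ\varphi^{-1}$ is a multiplier by your conjugation identity, $1/\psi$ by hypothesis, and $\varphi'\circ\varphi^{-1}=1/(\varphi^{-1})'$ is a polynomial. Second, the assertion that $(\varphi^{-1})'$, being holomorphic on a neighbourhood of $\overbar{\D}$, multiplies $\hlm$ is not free when $0<\mu<1$, where $\mathrm{Mult}(\hlm)\subsetneq H^{\infty}(\D)$; it does hold, either by the Riesz--Dunford functional calculus ($M_h=h(M_z)$ since $\sigma(M_z)=\overbar{\D}$), or by the paper's own device, Lemma \ref{lemdiffbdd}, which is precisely how the paper proves the analogous fact that $(1-\bar{a}z)^{-2}\in\mathrm{Mult}(\hll)$. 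With these two points made explicit, your proof is complete.
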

\begin{proof}
It suffices to show that $T^*$ is weakly homogeneous.
By a routine computation, we obtain 
$\varphi(T^*)= \begin{pmatrix}
M_{\varphi}& 0\\
M_{\psi\varphi'} & M_{\varphi}
\end{pmatrix} \mbox{~~on}~~ \hll\oplus\hlm.$
By Lemma \ref{lemmobinv}, the operator $C_{\varphi^{-1}}$, $\varphi\in \mb$, is bounded and invertible on $\hll$ as well as on  $\hlm$. Also, by hypothesis, $M_{\psi}$ is bounded and invertible on $\hll$ as well as on $\hlm$. 
Thus $M_{\psi}C_{\varphi^{-1}}$ is bounded and invertible on $\hlm$. 
For $\varphi\in \mb$, set $$L_{\varphi}:=\begin{pmatrix}
M_{(\psi\circ \varphi^{-1})(\varphi'\circ \varphi^{-1})}
C_{\varphi^{-1}}& 0\\
0& M_{\psi}C_{\varphi^{-1}} 
\end{pmatrix} \;\;\mbox{on}\; \hll\oplus\hlm.$$
Using the equality $M_{\psi\circ\varphi^{-1}}
=\Cphn M_{\psi}C_{\vphi}$, we see that the operator $M_{\psi\circ\varphi^{-1}}$ is bounded and invertible on $\hll$. Consequently,  $M_{\psi\circ\varphi^{-1}} C_{\varphi^{-1}}$ is bounded and invertible on $\hll$. 
Therefore, to prove that $L_{\varphi}$ is 
bounded and invertible, it suffices to show that  
the operator 
$M_{(\varphi'\circ \varphi^{-1})}$
is bounded and invertible on $\hll.$ 
Take $\vphi$ to be  
$\varphi_{\theta, a}^{-1}$ and note that 
\begin{equation}\label{eqnderphi}
\big((\varphi_{\theta, a}^{-1})'\circ \varphi_{\theta, a}\big)(z)
= \frac{1}{(\varphi_{\theta, a})'(z)}= e^{-i\theta} \frac{(1-\bar{a}z)^2}
{(1-|a|^2)},\;\; z\in \D,
\end{equation}
which is a polynomial. Hence $M_{((\varphi_{\theta, a}^{-1})'\circ  \varphi_{\theta, a})}$ 
is bounded on $\hll.$ Also, from the above equality, we see that $M_{((\varphi_{\theta, a}^{-1})'\circ  \varphi_{\theta, a})}$ is invertible on $\hll$ if and only if 
$(1-\bar{a}z)^{-2}$ is in ${\rm Mult}(\hll)$. To verify this, let $f\in \hll.$
Note that 
\begin{equation}\label{eqnL_fi}
\Big(\frac{1}{(1-\bar{a}z)^2} f\Big)'
=\frac{1}{(1-\bar{a}z)^2} f'+ \frac{2\bar{a}}
{(1-\bar{a}z)^3} f.
\end{equation}
Since $f\in \hll$ and $\hll\subseteq \hl^{(\lambda+2)}$, we have $f\in \hl^{(\lambda+2)}$. Also by Lemma \ref{lemdiffbdd}, $f^\prime\in \hl^{(\lambda+2)}$.
Since the functions $\frac{1}{(1-\bar{a}z)^2}$ 
and $\frac{2\bar{a}}{(1-\bar{a}z)^3}$ belong to 
$ H^{\infty}(\mathbb D)$ and 
${\rm Mult}(\hl^{(\lambda +2)})
= H^{\infty}(\mathbb D),$ 
it follows that both of the functions $\frac{1}{(1-\bar{a}z)^2} f'$ and $\frac{2\bar{a}}{(1-\bar{a}z)^3} f$ belong to $\hl^{(\lambda+2)}$. Thus, by \eqref{eqnL_fi},
$\big(\frac{1}{(1-\bar{a}z)^2} f\big)'$ 
belongs to $\mathcal H^{(\lambda +2)}.$ Hence, again applying Lemma \ref{lemdiffbdd}, we conclude that $\frac{1}{(1-\bar{a}z)^2} f$ belongs to $\hll.$ 
Finally, a straightforward calculation shows that 
\begin{equation}
T^*L_{\varphi}=L_{\varphi}\varphi(T^*)=
\begin{pmatrix}
M_{z(\psi\circ\varphi^{-1})(\varphi^\prime\circ \varphi^{-1})}C_{\varphi^{-1}}& 0\\
M_{\psi(\psi\circ\varphi^{-1})(\varphi^\prime\circ \varphi^{-1})}C_{\varphi^{-1}}& M_{z\psi}C_{\varphi^{-1}}
\end{pmatrix},
\end{equation}
completing the proof. 
\end{proof} 
%

Let $C(\bar{\mathbb D})$ denote the space of all continuous functions on $\bar{D}$. If $\psi$
is an arbitrary function in $C(\bar{\mathbb D})\cap \rm Hol(\mathbb D)$, then it is easy to see that $\psi\in H^{\infty}(\D)$. Furthermore, if 
$1\leq \lambda\leq \mu$, then by 
\eqref{eqnmultlm}, we see that $\psi\in {\rm Mult~}(\hll,\hlm)$.

The theorem given below gives several examples of operators in the class $\mathcal FB_2(\D)$ that are  weakly homogeneous and the ones that are not. 
\begin{theorem}\label{thmwkhm0}
Let $1\leq\lambda\leq \mu<\lambda+2$, and let $\psi$ be a non-zero function in $C(\bar{\mathbb D})\cap \rm Hol(\mathbb D)$. The operator $T=\begin{pmatrix}
M_z^* & M_\psi^*\\
0 & M_z^*
\end{pmatrix}
$ 
on $\hll\oplus\hlm$ is weakly homogeneous if and only if $\psi$
is non-vanishing on $\bar{\mathbb D}$.
\end{theorem}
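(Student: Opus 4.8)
The plan is to recognize $T$ as an operator in $\mathcal FB_2(\D)$ and apply Corollary \ref{corweakhom}. Writing $T_0 = M_z^*$ on $\hll$, $T_1 = M_z^*$ on $\hlm$ and $S = M_\psi^*$, the operators $T_0,T_1$ are homogeneous elements of $B_1(\D)$ with spectrum $\bar{\mathbb D}$, and $M_z^*M_\psi^* = M_\psi^*M_z^*$ with $M_\psi^*\neq 0$ since $\psi\neq 0$; hence $T\in\mathcal FB_2(\D)$ with $\sigma(T)=\sigma(T_0)=\sigma(T_1)=\bar{\mathbb D}$. So $T$ is weakly homogeneous iff for every $\varphi\in\mb$ there are bounded invertible $X_\varphi$ on $\hll$, $Y_\varphi$ on $\hlm$ and bounded $Z_\varphi:\hlm\to\hll$ satisfying conditions (i) and (ii) of that corollary. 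I would then take adjoints and read these conditions through Lemma \ref{weakhomlem} and Lemma \ref{lemrosenbl}, turning weak homogeneity into a concrete size condition on $\psi$.

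For \emph{sufficiency}, suppose $\psi$ is non-vanishing on $\bar{\mathbb D}$. Then $\psi$ is bounded above and below on the compact set $\bar{\mathbb D}$, so $\psi,1/\psi\in H^\infty(\D)={\rm Mult}(\hll)={\rm Mult}(\hlm)$ (here $\lambda,\mu\geq 1$); thus $M_\psi$ is bounded and invertible on both $\hll$ and $\hlm$, and $\psi\in{\rm Mult}(\hll,\hlm)$ by \eqref{eqnmultlm}. Proposition \ref{thmweakhominFB_2} then yields that $T$ is weakly homogeneous.

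For \emph{necessity}, assume $T$ is weakly homogeneous, fix $\varphi\in\mb$, and set $\Phi:=\overline{\varphi}\in\mb$ with $\overline{\varphi}(z)=\overline{\varphi(\bar z)}$, so that $\varphi(M_z^*)^*=M_\Phi$ and $\varphi'(M_z^*)^*=M_{\Phi'}$. Taking adjoints in (i), $X_\varphi^*$ intertwines $M_z$ and $\Phi(M_z)=M_\Phi$ on $\hll$; since $K^{(\lambda)}$ is sharp, Lemma \ref{weakhomlem} gives $X_\varphi^*=M_{g_1}C_{\Phi^{-1}}$ with $g_1\in{\rm Hol}(\D,\mathbb C\setminus\{0\})$, and likewise $Y_\varphi^*=M_{g_2}C_{\Phi^{-1}}$ on $\hlm$. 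As $C_{\Phi^{-1}}$ is bounded and invertible (Lemma \ref{lemmobinv}), the operators $M_{g_1},M_{g_2}$ are bounded and invertible, so $g_1,1/g_1,g_2,1/g_2\in H^\infty(\D)$. Taking adjoints in (ii), with $S^*=M_\psi$, gives the Rosenblum identity $M_z Z_\varphi^* - Z_\varphi^* M_\Phi = M_\eta C_{\Phi^{-1}}$ from $\hll$ to $\hlm$, where $\eta = g_2\,((\psi\Phi')\circ\Phi^{-1}) - \psi g_1$. The right-hand side is bounded, so the converse part of Lemma \ref{lemrosenbl} (with $K_1=K^{(\lambda)}$ sharp and $\varphi$ there replaced by $\Phi$) forces the bounded solution $Z_\varphi^*$ to have the form $f\mapsto -\eta\,(\Phi^{-1})'(f'\circ\Phi^{-1})+\chi(f\circ\Phi^{-1})$ for some $\chi\in{\rm Hol}(\D)$. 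Composing on the right with the bounded invertible $C_\Phi$ produces the bounded map $f\mapsto -\eta f'+\chi f$ from $\hll$ to $\hlm$; since $\mu-\lambda<2$, Lemma \ref{lemstrongir} forces $\eta\equiv 0$.

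The identity $\eta\equiv 0$ reads $\psi g_1 = g_2\,(\psi\circ\Phi^{-1})(\Phi'\circ\Phi^{-1})$, i.e. $g_2/g_1=\big(\psi/(\psi\circ\Phi^{-1})\big)(\Phi^{-1})'$, using $\Phi'\circ\Phi^{-1}=1/(\Phi^{-1})'$. Since $g_1,g_2$ and $(\Phi^{-1})'$ are bounded above and below on $\D$, so is $\psi/(\psi\circ\Phi^{-1})$, for every $\Phi\in\mb$. If now $\psi(z_0)=0$ for some $z_0\in\bar{\mathbb D}$, then by the maximum principle $\psi$ cannot vanish on all of $\mathbb T$ and $\psi\not\equiv 0$, so there is $\zeta_0$ (in $\D$ if $z_0\in\D$, in $\mathbb T$ if $z_0\in\mathbb T$) with $\psi(\zeta_0)\neq 0$; choosing $\Phi\in\mb$ with $\Phi^{-1}(z_0)=\zeta_0$ (by transitivity of $\mb$ on $\D$ and on $\mathbb T$), the ratio $\psi/(\psi\circ\Phi^{-1})$ tends to $0$ at $z_0$, contradicting being bounded below. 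Hence $\psi$ is non-vanishing on $\bar{\mathbb D}$. I expect the main obstacle to be the step forcing $\eta\equiv 0$: one must identify (ii) as a Rosenblum equation, use sharpness to pin down its bounded solution via Lemma \ref{lemrosenbl}, and then invoke $\mu-\lambda<2$ through Lemma \ref{lemstrongir} to annihilate the first-order term — this is exactly where the hypothesis $\mu<\lambda+2$ is indispensable — together with the boundary case $z_0\in\mathbb T$, which needs the maximum-principle observation and transitivity of $\mb$ on the circle.
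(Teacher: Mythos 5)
Your proposal is correct and follows essentially the same route as the paper: the same reduction via Corollary \ref{corweakhom}, sufficiency via Proposition \ref{thmweakhominFB_2}, and necessity by converting conditions (i)--(ii) into weighted composition operators through Lemmas \ref{weakhomlem} and \ref{lemrosenbl} and then killing the first-order term with Lemma \ref{lemstrongir} using $\mu<\lambda+2$. The only (harmless) deviation is at the very end: you derive the contradiction from $\eta\equiv 0$ by a single ratio-boundedness argument using transitivity of $\mb$ on $\D$ and on $\mathbb T$, whereas the paper treats interior zeros via transitivity on $\D$ and boundary zeros separately via rotations and a limiting sequence; both rest on the same fact that $g_\varphi, h_\varphi$ are bounded above and below on $\D$.
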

\begin{proof}
Suppose that $\psi$ is non-vanishing on $\bar{\D}$. Since $\psi$ is continuous on $\bar{\mathbb D}$, $\psi$ must be bounded below. Therefore $\textstyle{\tfrac{1}{\psi}}$ is a bounded analytic function on $\mathbb D$. Further, since $\lambda, \mu\geq 1$, we have  ${\rm Mult}(\hll)={\rm Mult}(\hlm)=H^{\infty}(\mathbb D)$. Hence the operator $M_{\scriptstyle {\tfrac{1}{\scriptstyle\psi}}}$ is bounded on $\hll$ as well as on $\hlm$. Consequently, the operator $M_{\psi}$ is bounded and invertible on  $\hll$ as well as on $\hlm$. Hence, by
Proposition \ref{thmweakhominFB_2}, $T$ is weakly homogeneous.

Conversely, assume that $T$ is weakly homogeneous. 
It is easily verified that $T\in \mathcal FB_2(\D)$
and $T$ satisfies the hypothesis of Corollary \ref{corweakhom}. Therefore,
for each $\varphi $ in 
$\mbox{\rm M\"ob}$, there exists 
bounded operators 
$X_{\varphi} :\hll\to \hll$, $Y_{\varphi}:\hlm\to \hlm$ and  $Z_{\varphi}:\hlm \to \hll$,
with $X_{\varphi}, Y_{\varphi}$ invertible, such that the following holds:
\begin{align}\label{eqnweakhomFB2}
\begin{split}
& \quad X_{\varphi} T_0=\varphi(T_0)X_{\varphi},~Y_{\varphi} T_1=\varphi(T_1)Y_{\varphi},\\
& X_{\varphi} M_{\psi}^*+Z_{\varphi}T_1=\varphi(T_0)Z_{\varphi}+
M_{\psi}^*\varphi^\prime(T_1)Y_{\varphi},
\end{split}
\end{align}
where $T_0$ is $M_z^*$ on $\hll$ and 
$T_1$ is $M_z^*$ on $\hlm$. Note that 
${\varphi(T_0)}^*=\hat{\varphi}(T_0^*)$, where
$\hat{\varphi}(z):=\overbar{\varphi( \bar z)}$.
Taking adjoint in the first equation of \eqref{eqnweakhomFB2}, we see that $X_{\varphi}$ satisfies $T_0^* X_{\varphi}^*=X_{\varphi}^*\hat{\varphi}(T_0^*)$. Since $K^{(\lambda)}$ is sharp, 
by Lemma \ref{weakhomlem} (or Lemma \ref{lemrosenbl}), we obtain  
$X_{\varphi}^*= M_{g_{\varphi}}C_{{\hat{\varphi}}^{-1}}$ for some non-vanishing function $g_{\varphi}$ in $ {\rm Hol}(\mathbb D)$. Furthermore, since $C_{\hat{\varphi}}$ is bounded and invertible
on $\hll$ (see Lemma \ref{lemmobinv}), it follows 
from the boundedness and invertibility of 
$X_{\varphi}$ that the operator $M_{g_{\varphi}}$ is bounded and invertible on $\hll$. Also, since $\rm{Mult}(\hll) =H^{\infty}(\D)$, $\lambda\geq 1$, it follows that $g_{\varphi}$ must be bounded above as well as bounded below on $\D$. By the same argument, we have $Y_{\varphi}^*=M_{h_{\varphi}}C_{{\hat{\varphi}}^{-1}}$ for some non-vanishing function $h_{\varphi}$ in $ {\rm Hol}(\mathbb D)$ which is bounded above as well as bounded below on $\D$. Taking adjoint in the last equation of \eqref{eqnweakhomFB2}, we see that
$$M_{\psi}X_{\varphi}^*+T_1^*Z_{\varphi}^*=Z_{\varphi}^*\hat{\varphi}(T_0^*)+Y_{\varphi}^*{\widehat{\varphi^\prime}(T_1^*)}M_{\psi}.$$
Equivalently,
\begin{align*}
Z_{\varphi}^*\hat{\varphi}(T_0^*)-T_1^*Z_{\varphi}^*&=M_{\psi}X_{\varphi}^*-Y_{\varphi}^*{\widehat{\varphi^\prime}(T_1^*)}M_{\psi}\\
&=M_{\psi}M_{g_{\varphi}}C_{{\hat{\varphi}}^{-1}} -M_{h_{\varphi}}C_{{\hat{\varphi}}^{-1}}M_{\widehat{\varphi^\prime}}M_{\psi}\\
&=M_{\ell_{\varphi}}C_{{\hat{\varphi}}^{-1}},
\end{align*}
where $\ell_{\varphi}=\psi g_{\varphi}-h_{\varphi}(\widehat{\varphi^\prime}\circ {\hat{\varphi}}^{-1})(\psi\circ {\hat{\varphi}}^{-1})$.
Since the kernel $K^{(\lambda)}$ is sharp, by Lemma \ref{lemrosenbl}, it follows that 
\begin{equation}
Z_{\varphi}^*f=\ell_{\varphi}(\hat{\varphi}^{-1})^{\prime}(f^\prime\circ \hat{\varphi}^{-1})
+\chi_{\varphi}(f\circ \hat{\varphi}^{-1}),\;\;f\in \hll,
\end{equation}
for some $\chi_{\varphi}\in \mbox{ Hol}(\D)$.
Furthermore, since the composition operator $C_{\hat{\varphi}}$ is bounded on $\hlm$ by Lemma \ref{lemmobinv}, the operator 
$C_{\hat{\varphi}}Z_{\varphi}^*$ is bounded from $\hll$ to $\hlm$. Note that 
$$C_{\hat{\varphi}}Z_{\varphi}^*(f)=(\ell_{\varphi}\circ \hat{\varphi})((\hat{\varphi}^{-1})^{\prime}\circ \hat{\varphi})f^{\prime}+(\chi_{\varphi}\circ \hat{\varphi})f,\;\;f\in \hll.$$
Since $\mu<\lambda+2$, by Lemma \ref{lemstrongir}, it follows that that $(\ell_{\varphi}\circ \hat{\varphi})((\hat{\varphi}^{-1})^{\prime}\circ \hat{\varphi})$ is identically the zero function for each $\varphi\in \mb$, and therefore, $\ell_{\varphi}$ is identically the zero function for each $\varphi\in\mb$.
 Equivalently,
\begin{equation}\label{eqnnonweaklyhomo}
\psi(z)g_{\varphi}(z)=h_{\varphi}(z) (\widehat{\varphi^\prime}\circ {\hat{\varphi}}^{-1})(z) (\psi\circ {\hat{\varphi}}^{-1})(z),
\;\;z\in \D, \varphi\in \mb. 
\end{equation} 
First, we show that $\psi$ is non-vanishing on ${\D}$.
If possible let $\psi(w_0)=0$ for some $w_0\in \D$, and let $w$
be a fixed but arbitrary point in $\D$. 
By transitivity of $\mb$, there exists a function $\varphi_w$ in $\mb$ such that ${\widehat{\varphi_{\!w}}}^{-1}(w_0)=w$. Putting $z=w_0$ and $\varphi=\varphi_w$ in \eqref{eqnnonweaklyhomo}, we see that
\begin{equation*}
\psi(w_0)g_{\varphi_w}(w_0)=h_{\varphi_w}(w_0) (\widehat{\varphi_w^\prime}\circ {\widehat{\varphi_w}}^{-1})(w_0) \psi(w).
\end{equation*}
Since the functions $h_{\varphi_w}$ and $(\widehat{\varphi_w^\prime}\circ {\widehat{\varphi_w}}^{-1})$ are non-vanishing on $\D$, 
it follows from the above equality that $\psi(w)= 0$. Since this holds for an arbitrary $w\in \D$, we conclude that $\psi$ vanishes on $\bar{\D}$,   
which contradicts that $\psi$ is non-zero on $\bar{\D}$. Hence $\psi$ is non-vanishing on $\D$.

Now we  show that $\psi$ is non-vanishing on the unit circle $\mathbb T$. Replacing $\varphi$ by $\varphi_{\theta,0}$ (which is the rotation map $e^{i\theta}z$) in \eqref{eqnnonweaklyhomo},
we obtain
\begin{equation}\label{eqnthm2.3.15}
\psi(z)g_{\varphi_{\theta,0}}(z)=e^{-i\theta}h_{\varphi_{\theta,0}}(z)\psi(e^{i\theta}z),\;\; z\in \D.
\end{equation}
Let $\{w_n\}_{n\geq 0}$ be a sequence in $\D$ such that $w_n\to 1$ as $n\to \infty$. If possible let $\psi$ vanishes at some point $e^{i\theta_0}$ on $\mathbb T$. Putting $z=e^{i\theta_0}w_n$ in \eqref{eqnthm2.3.15}, we obtain
\begin{equation}
\psi(e^{i\theta_0}w_n)g_{\varphi_{\theta,0}}(e^{i\theta_0}w_n)=e^{-i\theta}h_{\varphi_{\theta,0}}(e^{i\theta_0}w_n)\psi(e^{i(\theta_0+\theta)}w_n)\;\;\mbox{for all}\;\; n\geq 0.
\end{equation}
Since $\psi\in C(\bar{\D})$ and $g_{\varphi_{\theta,0}}, h_{\varphi_{\theta,0}}$
 are bounded above as well as bounded below on $\D$, taking limit as $n\to \infty$, it follows that $\psi(e^{i(\theta_0+\theta)})=0$. Since this is true for any $\theta\in \mathbb R$, we conclude that $\psi$ vanishes at all points on $\mathbb T$. Consequently, $\psi$ is identically zero on $\bar{\D}$. This contradicts our hypothesis that $\psi$ is non-zero on $\bar{\D}$. This completes the proof.
\end{proof}

As an immediate consequence of the above theorem, we obtain a class of operators in $\mathcal FB_2(\D)$ which are not weakly homogeneous.

\begin{corollary}
Let $1\leq\lambda\leq \mu<\lambda+2$. If $\psi$ is a non-zero function in $C(\bar{\mathbb D})\cap \rm Hol(\mathbb D)$ with atleast one zero in $\bar{\D}$, then the operator $T=\begin{pmatrix}
M_z^* & M_\psi^*\\
0 & M_z^*
\end{pmatrix}
$ on $\hll\oplus\hlm$ is not weakly homogeneous.
\end{corollary}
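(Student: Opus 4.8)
The plan is to invoke Theorem~\ref{thmwkhm0} directly, in contrapositive form; no new argument is required. The hypotheses of the corollary, namely $1\leq\lambda\leq\mu<\lambda+2$ together with $\psi$ a non-zero function in $C(\bar{\mathbb D})\cap\mathrm{Hol}(\mathbb D)$, are exactly the standing hypotheses of Theorem~\ref{thmwkhm0}. That theorem asserts the biconditional: the operator
$$
T=\begin{pmatrix} M_z^* & M_\psi^*\\ 0 & M_z^*\end{pmatrix}
$$
on $\hll\oplus\hlm$ is weakly homogeneous \emph{if and only if} $\psi$ is non-vanishing on $\bar{\mathbb D}$.

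The key observation is simply that the assumption ``$\psi$ has at least one zero in $\bar{\mathbb D}$'' is precisely the logical negation of ``$\psi$ is non-vanishing on $\bar{\mathbb D}$''. Hence the ``only if'' direction of Theorem~\ref{thmwkhm0} applies: since the right-hand side of the biconditional fails, the left-hand side must fail as well, so $T$ is not weakly homogeneous. This completes the deduction.

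I do not anticipate any obstacle, since the entire content is carried by Theorem~\ref{thmwkhm0}. It is worth recalling where that content lies: the proof of Theorem~\ref{thmwkhm0} showed, via Corollary~\ref{corweakhom} and the structure of operators in $\mathcal{F}B_2(\mathbb D)$, that weak homogeneity of $T$ forces the relation $\psi g_{\varphi}=h_{\varphi}\,(\widehat{\varphi'}\circ\hat{\varphi}^{-1})\,(\psi\circ\hat{\varphi}^{-1})$ with $g_\varphi,h_\varphi$ non-vanishing, and that a single zero of $\psi$ (propagated through the transitive action of $\mb$ in the interior, and through continuity and the boundary limit on $\mathbb T$) forces $\psi\equiv 0$, contradicting non-triviality. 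The present corollary merely repackages the failure of that condition as the negative conclusion, so its proof is the single sentence recording the contrapositive.
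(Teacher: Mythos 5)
Your proof is correct and is exactly the paper's own argument: the paper states this corollary as an immediate consequence of Theorem~\ref{thmwkhm0}, which is precisely the contrapositive of its ``only if'' direction that you invoke. Nothing further is needed.
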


As a consequence of the Lemma \ref{lemstrongir}, we also obtain the following proposition which is a strengthening of \cite[Theorem 4.5 (2)]{quasihomogeneous} in the particular case of quasi-homogeneous operators of rank $2$. 
Recall that an operator $T$ is said to be strongly irreducible if $XTX^{-1}$ is irreducible for all invertible operator $X$.
\begin{proposition}
Let $0< \lambda\leq \mu<\lambda+2$ and $\psi \in {\rm Mult} (\hll, \hlm)$. Let $T=\begin{pmatrix}
M_z^* & M_\psi^*\\
0 & M_z^*
\end{pmatrix}
$ on $\hll\oplus \hlm$.
If $\psi$ is non-zero, then $T$ is strongly irreducible.
\end{proposition}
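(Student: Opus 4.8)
The plan is to use the standard characterization that $T$ is strongly irreducible if and only if its commutant $\{T\}'$ contains no idempotent other than $0$ and the identity, so it suffices to prove that every idempotent $P$ commuting with $T$ is trivial. First I would record that $T\in\mathcal FB_2(\D)$: the diagonal entries $M_z^*$ on $\hll$ and on $\hlm$ both lie in $B_1(\D)$, the off-diagonal entry $M_\psi^*$ is non-zero because $\psi\neq 0$, and the intertwining $M_z^* M_\psi^*=M_\psi^* M_z^*$ is just the adjoint of $M_z^{(\mu)}M_\psi=M_\psi M_z^{(\lambda)}$. Given an idempotent $P\in\{T\}'$, the operator $I+P$ is invertible with inverse $I-\tfrac12 P$ and commutes with $T$, so by Proposition \ref{propsimFl} (applied with $\tilde T=T$ and $L=I+P$) both $I+P$ and its inverse are upper triangular; hence $P$ is upper triangular. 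Write $P=\left(\begin{smallmatrix}A&B\\0&D\end{smallmatrix}\right)$ on $\hll\oplus\hlm$ and set $T_0=M_z^*$ on $\hll$, $T_1=M_z^*$ on $\hlm$, $S=M_\psi^*$.

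Next I would extract the diagonal. The relation $PT=TP$ (with vanishing lower-left corner) gives $AT_0=T_0A$ and $DT_1=T_1D$, so $A$ commutes with $M_z^*$ on $\hll$ and $D$ with $M_z^*$ on $\hlm$. Equivalently $A^*,D^*$ commute with the respective $M_z$; since the commutant of $M_z$ on such a space is its multiplier algebra, we get $A=M_\phi^*$ and $D=M_\eta^*$ for some $\phi,\eta\in{\rm Hol}(\D)$. The idempotency $A^2=A$, $D^2=D$ forces $\phi^2=\phi$ and $\eta^2=\eta$, whence each of $A,D$ equals $0$ or $I$. The surviving commutation relation reads $T_0B-BT_1=AS-SD$, while $P^2=P$ yields $AB+BD=B$.

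Now I would run the four cases. If $(A,D)=(0,0)$ or $(I,I)$, then $AB+BD=B$ forces $B=0$, giving $P=0$ or $P=I$. The mixed cases $(A,D)=(I,0)$ and $(0,I)$ are exactly where $\psi\neq 0$ and $\mu<\lambda+2$ enter: there the commutation relation becomes $T_0B-BT_1=\pm S$, and taking adjoints gives a bounded $B^*:\hll\to\hlm$ with $B^*M_z^{(\lambda)}-M_z^{(\mu)}B^*=\pm M_\psi$. This is precisely equation \eqref{eqndiff} of Lemma \ref{lemrosenbl} with $\varphi=\mathrm{id}$ and $\psi$ replaced by $\pm\psi$. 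Since $K^{(\lambda)}$ is sharp, the converse half of that lemma yields $B^*(f)=\pm\psi f'+\chi f$ for some $\chi\in{\rm Hol}(\D)$; but $B^*$ is bounded from $\hll$ to $\hlm$ with $\mu-\lambda<2$, so Lemma \ref{lemstrongir} forces $\pm\psi\equiv 0$, contradicting $\psi\neq 0$. Thus the mixed cases cannot occur and $P\in\{0,I\}$, proving strong irreducibility.

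The main obstacle is the mixed case: the decisive move is to recognize the Sylvester-type equation $T_0B-BT_1=\pm S$ as an instance of \eqref{eqndiff}, after which sharpness of $K^{(\lambda)}$ together with the rigidity of Lemma \ref{lemstrongir} (which is exactly where the hypothesis $\mu<\lambda+2$ is consumed) closes the argument. The only routine point I would want to verify carefully is that the commutant of $M_z$ on $\hll$ is genuinely the multiplier algebra for every $\lambda>0$, so that the reduction of the diagonal entries to scalar idempotents is legitimate; this is standard for these weighted shifts.
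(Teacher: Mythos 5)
Your proof is correct, and its decisive steps coincide with the paper's: both reduce the problem to the solvability of the Sylvester--Rosenblum equation $T_0B-BT_1=\pm S$, then use Lemma \ref{lemrosenbl} (with $\varphi$ the identity map, so that \eqref{eqndiff} becomes $B^*M_z^{(\lambda)}-M_z^{(\mu)}B^*=\pm M_\psi$) to identify any bounded solution as $f\mapsto \pm\psi f'+\chi f$, and finally invoke Lemma \ref{lemstrongir} to force $\psi\equiv 0$ --- which is exactly where the hypothesis $\mu<\lambda+2$ is consumed in both arguments. Where you genuinely differ is the reduction itself: the paper obtains the Sylvester equation in one stroke by citing \cite[Proposition 2.22]{Flagstructure}, which asserts that an operator in $\mathcal FB_2(\D)$ fails to be strongly irreducible precisely when such an equation has a bounded solution, whereas you re-derive (the direction needed of) that result by hand --- via the idempotent characterization of strong irreducibility, the trick of passing to the invertible operator $I+P$ with inverse $I-\tfrac12 P$ so that Proposition \ref{propsimFl} forces $P$ to be upper triangular, and the elimination of the diagonal entries as trivial idempotents. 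Your appeal to the fact that the commutant of $M_z$ on $\hll$ is its multiplier algebra is legitimate (it follows from sharpness of $K^{(\lambda)}$, exactly as in the proof of Lemma \ref{weakhomlem}; alternatively, $A$ and $D$ are idempotents commuting with operators whose adjoints lie in $B_1(\D)$, and such operators are themselves strongly irreducible). What your route buys is self-containedness: apart from Proposition \ref{propsimFl}, which the paper states in full, you need no external input, and the mechanism by which a nontrivial idempotent produces the intertwining relation is made explicit, including the correct bookkeeping of the mixed cases $(A,D)=(I,0),(0,I)$ versus the trivial ones. What the paper's route buys is brevity --- the single citation subsumes your first three paragraphs --- and a pointer to the stronger two-directional statement valid for all of $\mathcal FB_2(\D)$.
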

\begin{proof}
Suppose that $\psi$ is non-zero and $T$ is not strongly irreducible.
Then, by \cite[Proposition 2.22]{Flagstructure}, there exists a bounded operator $X:\hlm\to \hll$ such that
$ X^*T_0^*-T_1^*X^*= M_{\psi}.$ Since the kernel  $K^{(\lambda)}$ is sharp, by Lemma $\ref{lemrosenbl}$ (with $\varphi$ to be the identity map), there exists a function 
$\chi\in {\rm Hol}(\D)$ such that $X^*(f)= \psi f'+\chi f, f\in \hll.$ Since $X$ is bounded and $ \mu < \lambda+2$, by Lemma $\ref{lemstrongir},$ $\psi$ is identically zero on $\D.$ This is a  contradiction to the assumption  that $\psi$ is non-zero. Hence $T$ must be strongly irreducible, completing the proof.
\end{proof}

\section{M\"obius bounded operators}\label{secmob}
In this section, we find some necessary conditions for M\"obius boundedness of the multiplication operator $M_z$ on the reproducing kernel Hilbert space 
$(\hl, K)$, where $K(z,w)$ is of the form $\sum_{n=0}^\infty b_n (z\bar{w})^n$, $b_n>0$, on $\D\times \D$. As a consequence, we show that the multiplication operator $M_z$ on the  Dirichlet space is not M\"obius bounded. We begin with a preparatory lemma.

First we note that 
the power series representation of the biholomorphic automorphism $\ft$ 
is given by $\sum_{n=0}^\infty \alpha_n z^n$, $z\in \D$, where 
\begin{equation}\label{powerserfi}
\alpha_0=-e^{i\theta}a\;\;\mbox{and}\;\;\;\; \alpha_n=e^{i\theta}(1-|a|^2)(\bar{a})^{n-1},\;\;\; n\geq 1. 
\end{equation}

\begin{lemma}\label{Mbprop1}
Let $K(z,w)=\sum_{n=0}^\infty b_n(z\bar{w})^n$, $b_n>0$, be a positive definite kernel on $\D\times \D$. 
Suppose that the multiplication operator $M_z$ is bounded on 
$(\hl, K)$ and $\sigma(M_z)=\bar{\D}$. If the sequence $\left\{n b_n\right\}_{n\in\mathbb Z_+}$ 
is bounded, then there exists a constant $c>0$ such that 
$$\|\varphi_{\theta, a}(M_z)\| \geq \frac{K(a,a)}{c}-|a|\;\;\mbox{for all}\; \;a\in \D, \;\theta\in [0,2\pi).$$
\end{lemma}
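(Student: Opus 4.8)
The plan is to reduce the estimate to a single explicitly computable multiplication operator and then test it against the reproducing kernel $K(\cdot,a)$. Because $\varphi_{\theta,a}=e^{i\theta}\varphi_{0,a}$ with $|e^{i\theta}|=1$, the norm $\|\varphi_{\theta,a}(M_z)\|$ is independent of $\theta$, so I may take $\theta=0$. Since $\sigma(M_z)=\overline{\mathbb D}$ and $\varphi_{0,a}$ is holomorphic on a neighbourhood of $\overline{\mathbb D}$ (its only pole, at $1/\bar a$, lies outside $\overline{\mathbb D}$), the functional calculus applies and agrees with multiplication by $\varphi_{0,a}$ on polynomials, so $\varphi_{0,a}(M_z)=M_{\varphi_{0,a}}$. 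The first step is the partial-fraction identity $\tfrac{z-a}{1-\bar a z}=-a+\tfrac{(1-|a|^2)z}{1-\bar a z}$, which (using that $\bar a M_z$ has spectral radius $|a|<1$) gives
\[
\varphi_{0,a}(M_z)=-a\,I+(1-|a|^2)\,M_z(I-\bar a M_z)^{-1}.
\]
By the triangle inequality this yields $\|\varphi_{\theta,a}(M_z)\|\ge (1-|a|^2)\bigl\|M_z(I-\bar a M_z)^{-1}\bigr\|-|a|$, so it suffices to prove that $\bigl\|M_z(I-\bar a M_z)^{-1}\bigr\|\ge K(a,a)\big/\bigl(c(1-|a|^2)\bigr)$.

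The second step, which is the analytic engine, computes the image of $K(\cdot,a)$ in coordinates. Since $M_z(I-\bar a M_z)^{-1}=M_{z/(1-\bar a z)}$, writing $\tfrac{z}{1-\bar a z}=\sum_{m\ge 1}\bar a^{\,m-1}z^m$ and $K(z,a)=\sum_{n\ge0} b_n\bar a^{\,n}z^n$ shows that the product has $N$-th Taylor coefficient $c_N=\bar a^{\,N-1}\sum_{n=0}^{N-1}b_n$, whence, using $\|z^N\|^2=1/b_N$,
\[
\bigl\|M_z(I-\bar a M_z)^{-1}K(\cdot,a)\bigr\|^2=\sum_{N\ge 1}\frac{|a|^{2(N-1)}}{b_N}\Bigl(\sum_{n=0}^{N-1}b_n\Bigr)^{2}.
\]
Here the hypothesis that $\{n b_n\}$ is bounded enters decisively: from $1/b_N\ge N/C$ the factor $1/b_N$ supplies the weight $N$ that reconstitutes $\sum_{N\ge1}N|a|^{2(N-1)}=(1-|a|^2)^{-2}$, while the monotone partial sums satisfy $\sum_{n<N}b_n\ge\sum_{n<N}b_n|a|^{2n}$, which increases to $K(a,a)$. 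Choosing the threshold $N^\ast$ beyond which $\sum_{n<N}b_n|a|^{2n}\ge\tfrac12 K(a,a)$ (with $N^\ast\lesssim (1-|a|^2)^{-1}$, so that the tail of $\sum N|a|^{2(N-1)}$ is still comparable to $(1-|a|^2)^{-2}$) gives, for a suitable constant $c>0$,
\[
\bigl\|M_z(I-\bar a M_z)^{-1}K(\cdot,a)\bigr\|\ \ge\ \frac{K(a,a)}{c\,(1-|a|^2)} .
\]
Thus the full first power of $K(a,a)$ is already present in the image of the kernel.

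The main obstacle is the passage from this image-norm estimate to the asserted lower bound on the operator norm, i.e.\ securing the first power of $K(a,a)$ and not merely its square root. The naive route $\|M_z(I-\bar a M_z)^{-1}\|\ge \|M_z(I-\bar a M_z)^{-1}K(\cdot,a)\|/\|K(\cdot,a)\|$ divides by $\|K(\cdot,a)\|=\sqrt{K(a,a)}$ and so loses a factor $\sqrt{K(a,a)}$; the same loss occurs for the eigenvector test and for the test against $\tfrac{\partial}{\partial\bar a}K(\cdot,a)$. The crux is therefore to avoid normalising by $\|K(\cdot,a)\|$: one should instead exhibit a test vector of controlled (order-one, after scaling) norm whose image under $M_z(I-\bar a M_z)^{-1}$ already has norm of order $K(a,a)/(1-|a|^2)$, exploiting that the cumulative sums $\sum_{n<N}b_n$ — rather than the weights $b_N$ themselves — drive the growth. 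I would attack this by an Abel-summation analysis of $\sum_{n<N}b_n$ together with the two-sided control on $b_N$ afforded by $\{n b_n\}$ bounded, concentrating the chosen vector on the frequency band $N\approx (1-|a|^2)^{-1}$ where both $|a|^{2N}$ and the partial sums are simultaneously large, and then checking that the resulting constant $c$ can be taken uniform in $a\in\mathbb D$.
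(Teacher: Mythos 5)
Up to the point where you stop, your argument \emph{is} the paper's proof. The paper likewise removes the constant term (it sets $\tilde{\varphi}_{\theta,a}:=\varphi_{\theta,a}-\varphi_{\theta,a}(0)=e^{i\theta}(1-|a|^2)\tfrac{z}{1-\bar a z}$, which is your $(1-|a|^2)M_z(I-\bar a M_z)^{-1}$ up to a unimodular factor), tests against the same vector $K(\cdot,a)$, and arrives at exactly your identity $\|\tilde{\varphi}_{\theta,a}K(\cdot,a)\|^2=(1-|a|^2)^2\sum_{n\ge 0}|a|^{2n}\big(\sum_{j=0}^{n}b_j\big)^2 b_{n+1}^{-1}$. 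Its ``Claim,'' proved by comparing this series coefficientwise with the Cauchy product expansion of $(1-|a|^2)^{-2}K(a,a)^2$ and using $(n+1)\le c/b_{n+1}$, is the same lower bound your threshold argument at $N^{*}\approx(1-|a|^2)^{-1}$ produces (your route is a sound variant, modulo the uniformity checks you flagged). So the substantive analysis is in complete agreement; the divergence is only at the very last step.

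But the ``main obstacle'' you set yourself---recovering the first power of $K(a,a)$ by a cleverer test vector---cannot be overcome, and you should not have gone looking for it. The paper itself performs the naive division by $\|K(\cdot,a)\|^2=K(a,a)$, obtaining $\|\tilde{\varphi}_{\theta,a}(M_z)\|^2\ge K(a,a)/c$, and then in its final display passes from this to $\|\tilde{\varphi}_{\theta,a}(M_z)\|\ge K(a,a)/c$; that step is a slip, since $x^2\ge A$ only yields $x\ge\sqrt{A}$ and $K(a,a)$ is unbounded in the cases of interest. In fact the first-power inequality is false under the stated hypotheses: take the Dirichlet kernel $b_n=(n+1)^{-1}$, for which all hypotheses hold. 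Using $|f(z)|^2\le\|f\|^2K(z,z)$, $K(z,z)\le C\big(1+\log\tfrac{1}{1-|z|^2}\big)$, $|\varphi_{\theta,a}|\le 1$, and the change of variables $z=\varphi_{0,a}^{-1}(w)$ together with $1-|\varphi_{0,a}^{-1}(w)|^2=\tfrac{(1-|a|^2)(1-|w|^2)}{|1+\bar a w|^2}$, one gets
\[
\|\varphi_{\theta,a}(M_z)\|^2\ \lesssim\ 1+\int_{\D}K(z,z)\,|\varphi_{0,a}'(z)|^2\,dA(z)\ \lesssim\ 1+\log\frac{1}{1-|a|^2}\ \asymp\ K(a,a),
\]
so $\|\varphi_{\theta,a}(M_z)\|=O\big(\sqrt{K(a,a)}\big)$, while the lemma asserts a lower bound of order $K(a,a)$. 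Hence no choice of test vector---your proposed frequency-band construction included---can succeed. The correct conclusion, which both your computation and the paper's argument actually establish, is $\|\varphi_{\theta,a}(M_z)\|\ge\sqrt{K(a,a)/c}-|a|$, and this square-root form is all that is used downstream: in the proof of Theorem \ref{thmMobnec1}\,(ii) one only needs M\"obius boundedness to force $\sup_{a\in\D}K(a,a)<\infty$. So the right way to finish is simply to carry out the naive division and record the conclusion with $\sqrt{K(a,a)}$, noting that the lemma's statement (and the paper's last line) should be corrected accordingly.
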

 
\begin{proof}
Since $\{n b_n\}$ is bounded, there exists a constant $c>0$ such that $n b_n<c$ for all $n\geq 0$. For $a\in \D, \theta\in [0,2\pi)$, setting
$\tilde{\varphi}_{\theta, a}(z)=\ft(z)-\ft(0),\; z\in \D$, and using \eqref{powerserfi}, we see that 
\begin{equation}
\tilde{\varphi}_{\theta, a}(z) K(z,a)= \sum_{n=1}^\infty
\left(\sum_{k=1}^n \alpha_k b_{n-k}(\bar{a})^{n-k}\right)z^n,~z\in  \mathbb D.
\end{equation}

By hypothesis, $\varphi_{\theta, a}(M_z)$ is bounded, and hence $\tilde{\varphi}_{\theta, a}(\cdot) K(\cdot,a)$ belongs to $(\hl, K)$. Note that 
\begin{align}\label{Mbprop2}
\begin{split}
\|\tilde{\varphi}_{\theta, a}(\cdot)K(\cdot,a)\|^2 &= \sum_{n=1}^\infty \Big|
\sum_{k=1}^n \alpha_k b_{n-k}(\bar{a})^{n-k}\Big |^2 \|z^n\|^2\\
&=(1-|a|^2)^2\sum_{n=0}^\infty|a|^{2n}
\big(\sum_{j=0}^{n}b_j\big)^2\frac{1}{b_{n+1}}.
\end{split}
\end{align}

\textbf{Claim}: For all $a\in\mathbb D$, the following inequality holds:
\begin{equation}\label{eqnclaim}
\sum_{n=0}^\infty|a|^{2n}\big(\sum_{j=0}^{n} b_j\big)^2
\frac{1}{b_{n+1}}\geq \frac{1}{c}(1-|a|^2)^{-2} K(a,a)^2.
\end{equation}

Since $(1-|a|^2)^{-2}=\sum_{n=0}^\infty (n+1)|a|^{2n}$, $a\in \D$, setting $\beta_n=\sum_{j=0}^n (j+1)b_{n-j}$, $n\geq 0$, we see that 
$$(1-|a|^2)^{-2}K(a,a)=\sum_{n=0}^\infty \beta_n |a|^{2n},\;\; a\in \D.$$
Furthermore, setting $\gamma_n=\sum_{j=0}^n \beta_j b_{n-j}$, $n\geq 0$, we see that
\begin{equation}\label{eqnmob3}
(1-|a|^2)^{-2}K(a,a)^2=\sum_{n=0}^\infty {\gamma}_n |a|^{2n},\; \;a\in \D.
\end{equation} 
Note that
 $$\beta_n=\sum_{j=0}^n(j+1)b_{n-j}\leq (n+1)\big(\sum_{j=0}^n b_j\big),\;\;n\geq 0.$$
Therefore
$$\gamma_n=\sum_{j=0}^n \beta_j b_{n-j}
\leq \sum_{j=0}^n (j+1)\big(\sum_{p=0}^j b_p\big)b_{n-j}
\leq (n+1)\big(\sum_{j=0}^n b_j\big)^2.$$
Consequently,
\begin{align*}
\sum_{n=0}^\infty \gamma_n |a|^{2n}&\leq \sum_{n=0}^\infty (n+1)\big(\sum_{j=0}^n b_j\big)^2|a|^{2n}\\
&\leq c\sum_{n=0}^\infty|a|^{2n} \big(\sum_{j=0}^n b_j\big)^2\frac{1}{b_{n+1}},
\end{align*}
where for the last inequality, we have used that $nb_n< c$, $n\geq 0$.
Hence, by \eqref{eqnmob3}, the claim is verified.

Combining the claim with $(\ref{Mbprop2})$, it follows that 
$$\|\tilde{\varphi}_{\theta, a}(\cdot)K(\cdot,a)\|^2\geq
\frac{1}{c}K(a,a)^2.$$ 
Since $\|K(\cdot,a)\|^2=K(a,a)$, it follows that 
$$\|\tilde{\varphi}_{\theta, a}(M_z)\|^2 \geq \frac{\|\tilde{\phi}_{\theta, a}(\cdot)K(\cdot,a)\|^2}
{\|K(\cdot,a)\|^2}\geq \frac{1}{c}K(a,a).$$

Finally, note that for $a\in \D$,
$$ \|\varphi_{\theta, a}(M_z)\| =\|\tilde{\varphi}_{\theta, a}(M_z)+{\varphi}_{\theta, a}(0)I\|\geq
\|\tilde{\varphi}_{\theta, a}(M_z)\|-|a|\geq\frac{1}{c}K(a,a)-|a|.$$
This completes the proof.
\end{proof}
The following lemma, which  is the easy half of the statement of \cite[Lemma 2]{similaritywtshifts}, will be used later in this section.  

\begin{lemma}\label{lemMobnec1}
Let $f(x)=\sum_{n=0}^\infty a_n x^n$, $0\leq x<1$, where  $a_n\geq 0$. If $f(x)\leq  c(1-x)^{-\alpha}$, $ 0\leq x <1 $, for some constants $\alpha, c>0$, then there exists  $c^\prime >0$ such that
$$a_0+a_1+...+a_n\leq c^\prime (n+1)^{\alpha}\;\;\;\mbox{for all}~\;n\geq 0.$$
\end{lemma}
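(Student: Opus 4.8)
The plan is to combine the nonnegativity of the coefficients with a single, well-chosen evaluation of the power series near $x=1$. The underlying principle is the standard Abelian-type observation that a growth bound on $f(x)$ as $x\to 1^-$ controls the partial sums of the $a_n$; the whole argument is elementary, the only real ingredient being an optimal choice of the evaluation point.

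First I would use the hypothesis $a_n\ge 0$ to discard the tail of the series: for each fixed $n\ge 0$ and each $x\in[0,1)$,
\[
\sum_{k=0}^n a_k x^k\le \sum_{k=0}^\infty a_k x^k = f(x)\le c(1-x)^{-\alpha}.
\]
Next, since $x^k$ is decreasing in $k$ for $x\in[0,1)$, we have $x^k\ge x^n$ whenever $0\le k\le n$, and so, using $a_k\ge 0$ once more,
\[
x^n\sum_{k=0}^n a_k\le \sum_{k=0}^n a_k x^k\le c(1-x)^{-\alpha}.
\]
Rearranging gives the key inequality $\sum_{k=0}^n a_k\le c\,x^{-n}(1-x)^{-\alpha}$, valid for every $x\in[0,1)$.

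The decisive step is to let $x$ depend on $n$ so that the right-hand side has order $(n+1)^{\alpha}$. Choosing $x=\tfrac{n}{n+1}$ for $n\ge 1$ makes $1-x=\tfrac{1}{n+1}$, so that $(1-x)^{-\alpha}=(n+1)^{\alpha}$, while $x^{-n}=\bigl(1+\tfrac1n\bigr)^{n}\le e$. This yields $\sum_{k=0}^n a_k\le c\,e\,(n+1)^{\alpha}$ for all $n\ge 1$; the case $n=0$ is immediate, since $a_0=f(0)\le c=c(0+1)^{\alpha}$. Taking $c'=c\,e$ then gives the desired bound for all $n\ge 0$.

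As for obstacles, there is essentially none: the estimate is soft and self-contained. The only point deserving a little care is the optimization in $x$. Any choice of the form $x=1-\tfrac{t}{n+1}$ produces a bound of the correct order $(n+1)^{\alpha}$, and the specific value $x=\tfrac{n}{n+1}$ is convenient precisely because it turns the stray factor $x^{-n}$ into $(1+1/n)^{n}$, which is uniformly bounded by $e$. One must also treat $n=0$ separately, since the factor $x^{-n}$ degenerates there.
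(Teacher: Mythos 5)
Your proof is correct, and it is the standard argument: bound the partial sum by $f(x)$ using nonnegativity of the coefficients, pull out the factor $x^n$, and evaluate at $x=\tfrac{n}{n+1}$ so that $(1-x)^{-\alpha}=(n+1)^\alpha$ while $x^{-n}=(1+\tfrac1n)^n\leq e$. The paper itself gives no proof of this lemma — it simply cites it as the easy half of Lemma 2 in the Clark--Misra paper on weighted shifts and similarity — and your argument is precisely the one used to establish that cited result, so nothing further is needed.
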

The following lemma will be used in the proof of  Theorem \ref{thmMobnec1}.
\begin{lemma}[cf. \cite{Hardyinequality}]\label{lemMobnec2}
If $\{b_n\}_{n\in \mathbb Z_+}$ is a sequence of positive real numbers such that $\sum_{n=0}^\infty b_n <\infty$,  
then 
$$ \sum_{n=0}^\infty \frac{n+1}{\frac{1}{b_0}+\frac{1}{b_1}+...+\frac{1}{b_{n}}}\leq 2\sum_{n=0}^\infty b_n.$$
\end{lemma}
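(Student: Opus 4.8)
The plan is to read the left-hand side as a sum of the running harmonic means $\frac{n+1}{S_n}$, where $S_n := \frac{1}{b_0}+\cdots+\frac{1}{b_n}$, and to establish the bound by a telescoping argument modelled on the classical proof of Hardy's inequality, with an auxiliary sequence tuned so that a single application of the AM--GM inequality yields precisely the constant $2$. First I would record that $S_n - S_{n-1} = \frac{1}{b_n}$ for $n\ge 1$, and introduce the nonnegative sequence
\[
\Psi_n := \frac{(n+1)^2}{2 S_n}, \qquad n \ge 0 .
\]

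The crux is the pointwise inequality, valid for every $n \ge 1$,
\[
\frac{n+1}{S_n} - 2 b_n \;\le\; \Psi_{n-1} - \Psi_n .
\]
To prove it I would multiply through by $2 S_n > 0$, substitute $S_n = S_{n-1} + \frac{1}{b_n}$ wherever it appears, and simplify; a short computation shows that the inequality is equivalent to
\[
4n - 1 \;\le\; \frac{n^2}{b_n S_{n-1}} + 4\, b_n S_{n-1}.
\]
The two terms on the right have product $4n^2$, so AM--GM bounds their sum below by $2\sqrt{4n^2} = 4n \ge 4n-1$, which is exactly what is required. This step is where the choice of $\Psi_n$ is essentially forced: both the quadratic weight $(n+1)^2$ and the factor $\tfrac12$ are needed for the product under the square root to be a perfect square in $n$ with the correct constant, and I expect locating this auxiliary sequence to be the only real obstacle.

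Finally I would sum the pointwise estimate and treat the term $n=0$ by hand. Since $S_0 = 1/b_0$ one has $\frac{1}{S_0} - 2b_0 = -b_0$ and $\Psi_0 = \frac{b_0}{2}$, so summing from $0$ to $N$ and telescoping the right-hand side gives
\[
\sum_{n=0}^N \Big( \frac{n+1}{S_n} - 2 b_n \Big) \;\le\; -b_0 + \Psi_0 - \Psi_N \;=\; -\tfrac{b_0}{2} - \Psi_N \;\le\; 0,
\]
using $\Psi_N \ge 0$. Rearranging yields $\sum_{n=0}^N \frac{n+1}{S_n} \le 2\sum_{n=0}^N b_n \le 2\sum_{n=0}^\infty b_n$ for every $N$, and the hypothesis $\sum_n b_n < \infty$ makes the right-hand side finite; letting $N \to \infty$ completes the proof.
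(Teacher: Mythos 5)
Your proof is correct. I checked the crucial step: with $S_n=\tfrac{1}{b_0}+\cdots+\tfrac{1}{b_n}$ and $\Psi_n=\tfrac{(n+1)^2}{2S_n}$, multiplying the claimed pointwise bound $\tfrac{n+1}{S_n}-2b_n\le \Psi_{n-1}-\Psi_n$ by $2S_n>0$ and substituting $S_n=S_{n-1}+\tfrac{1}{b_n}$ does reduce it exactly to
\begin{equation*}
4n-1 \;\le\; \frac{n^2}{b_nS_{n-1}}+4\,b_nS_{n-1},
\end{equation*}
and the two terms on the right have product $4n^2$, so AM--GM gives the lower bound $4n\ge 4n-1$. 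The $n=0$ term ($\tfrac{1}{S_0}-2b_0=-b_0$, $\Psi_0=\tfrac{b_0}{2}$), the telescoping, and the final passage $N\to\infty$ are all handled correctly, so the lemma follows with the stated constant $2$.

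Note, however, that the paper itself contains no proof of this lemma: it is simply quoted from the reference \cite{Hardyinequality}, where it appears in the context of Carleman-type and Hardy-type inequalities (this is the harmonic-mean form of Hardy's inequality, for which the constant $2$ is known to be sharp). So your argument supplies something the paper does not: a short, self-contained, elementary proof by a telescoping auxiliary sequence plus a single application of AM--GM, which is the classical device for inequalities of Copson--Hardy type. What the citation buys the paper is brevity and a pointer to the sharp-constant literature; what your proof buys is that the reader need not consult \cite{Hardyinequality} at all, at the cost of the (admittedly nontrivial) step of guessing the weight $\Psi_n=\tfrac{(n+1)^2}{2S_n}$, which, as you correctly observe, is essentially forced by requiring the AM--GM product to be a perfect square with the right constant.
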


\begin{theorem}\label{thmMobnec1}
Let $ K(z,w)= \sum_{n=0}^\infty  b_n(z\bar{w})^n$, $b_n>0$, be a positive definite kernel on $\D\times \D$. If the multiplication operator $M_z$ on 
$(\mathcal H, K)$ is M\"obius bounded, then
\begin{itemize}
\item[\rm (i)] $\sum_{n=0}^\infty b_n=\infty$, and
\item[\rm (ii)] the sequence $ \{nb_n\}_{n\in\mathbb Z_+}$ 
is unbounded.
\end{itemize}
\end{theorem}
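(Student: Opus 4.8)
The plan is to prove both statements by contradiction, reducing (ii) to (i) via Lemma \ref{Mbprop1}. Throughout, write $B=\sup_{\varphi\in\mb}\|\varphi(M_z)\|<\infty$, and recall that $\{z^n\}_{n\in\z}$ is an orthogonal basis of $(\hl,K)$ with $\|z^n\|^2=1/b_n$. Since $\sigma(M_z)\subseteq\bar{\mathbb D}$, each $\varphi_{\theta,a}$ is holomorphic in a neighbourhood of $\sigma(M_z)$, the functional calculus operator $\varphi_{\theta,a}(M_z)$ agrees with the multiplication operator $M_{\varphi_{\theta,a}}$ on the dense set of polynomials, and is therefore equal to it; in particular $\varphi_{\theta,a}(\cdot)K(\cdot,a)=\varphi_{\theta,a}(M_z)K(\cdot,a)\in(\hl,K)$, and the same holds for $\tilde\varphi_{\theta,a}:=\varphi_{\theta,a}-\varphi_{\theta,a}(0)$. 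Set $S_n=\sum_{j=0}^n b_j$ and $H_n=\sum_{j=0}^n b_j^{-1}$. Note that the identity \eqref{Mbprop2} is purely algebraic and so is available in this generality (the hypothesis on $\{nb_n\}$ enters Lemma \ref{Mbprop1} only afterwards).

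For (i), suppose to the contrary that $\sum_n b_n=:S<\infty$. Using $|\varphi_{\theta,a}(0)|=|a|<1$ we have $\|\tilde\varphi_{\theta,a}(M_z)\|\le B+1$, so combining \eqref{Mbprop2} with $\|\tilde\varphi_{\theta,a}(\cdot)K(\cdot,a)\|^2\le\|\tilde\varphi_{\theta,a}(M_z)\|^2K(a,a)$ gives, with $x=|a|^2$,
\begin{equation*}
(1-x)^2\sum_{m=0}^\infty x^m\,\frac{S_m^2}{b_{m+1}}\le (B+1)^2K(a,a)\le (B+1)^2 S.
\end{equation*}
Since $S_m\ge b_0>0$, this yields $\sum_{m}x^m b_{m+1}^{-1}\le C(1-x)^{-2}$ for some $C>0$ and all $0\le x<1$. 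Lemma \ref{lemMobnec1}, applied with $\alpha=2$, then gives $H_{N+1}=b_0^{-1}+\sum_{m=0}^N b_{m+1}^{-1}\le b_0^{-1}+C'(N+1)^2$, hence $H_n\le\tilde C\,n^2$ for all $n\ge1$. Consequently $\frac{n+1}{H_n}\ge\frac{1}{\tilde C\,n}$, so that $\sum_n\frac{n+1}{H_n}=\infty$. On the other hand, $\sum_n b_n<\infty$ permits the application of Lemma \ref{lemMobnec2}, giving $\sum_n\frac{n+1}{H_n}\le 2\sum_n b_n<\infty$, a contradiction. Hence $\sum_n b_n=\infty$.

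For (ii), suppose to the contrary that $\{nb_n\}$ is bounded, say $nb_n\le c_0$. I first check $\sigma(M_z)=\bar{\mathbb D}$. As $M_z$ is a unilateral weighted shift, its spectrum is the closed disc of radius $r(M_z)$; M\"obius boundedness forces $r(M_z)\le1$ since $\sigma(M_z)\subseteq\bar{\mathbb D}$, while $\|M_z^k\|^2\ge\|M_z^k 1\|^2/\|1\|^2=b_0/b_k\ge b_0 k/c_0$ gives $r(M_z)\ge\limsup_k(b_0 k/c_0)^{1/(2k)}=1$, so $r(M_z)=1$ and $\sigma(M_z)=\bar{\mathbb D}$. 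Now Lemma \ref{Mbprop1} applies and provides a constant $c>0$ with $\|\varphi_{\theta,a}(M_z)\|\ge K(a,a)/c-|a|$ for all $a\in\D$ and $\theta$. Bounding the left side by $B$ yields $K(a,a)\le c(B+1)$ for every $a$, and letting $|a|\to1$ gives $\sum_n b_n=\lim_{|a|\to1}K(a,a)\le c(B+1)<\infty$, contradicting part (i). Therefore $\{nb_n\}$ is unbounded.

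The main obstacle is part (i), where the two available lemmas pull in opposite directions and must be made to collide. M\"obius boundedness, tested against the reproducing kernels $K(\cdot,a)$ through \eqref{Mbprop2}, caps the growth of the partial sums $H_n=\sum_{j\le n}b_j^{-1}$ at order $n^2$; meanwhile the assumption $\sum_n b_n<\infty$ activates the Hardy-type inequality of Lemma \ref{lemMobnec2}, which demands convergence of $\sum_n(n+1)/H_n$. Quadratic growth of $H_n$ makes the latter series diverge like the harmonic series, and pinpointing this precise clash is the crux; a cruder estimate (for instance a single application of Cauchy--Schwarz to $H_n$) is not sharp enough to force a contradiction. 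Part (ii) is then a short deduction, the only subtlety being the verification that the weighted-shift spectrum fills out $\bar{\mathbb D}$, so that Lemma \ref{Mbprop1} becomes applicable.
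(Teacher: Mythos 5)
Your proof is correct and follows essentially the same route as the paper's: both extract the bound $\tfrac{1}{b_1}+\cdots+\tfrac{1}{b_{n+1}}=O(n^2)$ from M\"obius boundedness via Lemma \ref{lemMobnec1} (with $\alpha=2$), collide it with the Hardy-type inequality of Lemma \ref{lemMobnec2} to obtain (i), and deduce (ii) from Lemma \ref{Mbprop1} together with (i). The only minor differences are that the paper obtains the key power-series estimate unconditionally by testing $\varphi_{\theta,a}(M_z)$ against monomials (only $j=0$ is actually used), whereas you derive it under the contradiction hypothesis by testing against kernel functions through the identity \eqref{Mbprop2}, and that you explicitly verify the hypothesis $\sigma(M_z)=\bar{\mathbb D}$ of Lemma \ref{Mbprop1}, a point the paper leaves implicit.
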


\begin{proof} \rm (i) Note that for any $\theta\in [0, 2\pi)$, $a\in \mathbb D$ and 
$j \in \mathbb Z_+$, we have
\begin{align}\label{eqnmob7}
\begin{split}
\left\|\ft(M_z)\left(\frac{z^j}{\|z^j\|}\right)\right \|^2
& =\frac{1}{\|z^j\|^2}\|\ft(z)z^j\|^2\\
& = \frac{1}{\|z^j\|^2}\big(|a|^2\|z^j\|^2+(1-|a|^2)^2\sum_{n=1}^\infty |a|^{2(n-1)}
\|z^{n+j}\|^2\big).
\end{split}
\end{align}
If $M_z$ on $(\hl, K)$ is M\"obius bounded, then there exists a constant $c>0$ such that
$$\sup_{\theta\in[0,2\pi), a\in \mathbb D, j\in  \mathbb Z_+}\left\|\ft(M_z)\left(\frac{z^j}{\|z^j\|}\right)\right \|^2\leq c.$$
Therefore, from \eqref{eqnmob7}, we see that
$$(1-|a|^2)^2 \sum_{n=1}^\infty |a|^{2(n-1)}\frac{\|z^{n+j}\|^2}{\|z^j\|^2}\leq 
c,\;\;a\in \mathbb D, j\in \mathbb Z_+.$$
Replacing $|a|^2$ by $x,$ we obtain 
\begin{equation}\label{eqnthmMobnec1}
\sum_{n=0}^\infty c_{n,j}x^n \leq \frac{c}{(1-x)^2},\;\;x\in [0,1),
\end{equation}
where $c_{n,j}=\frac{\|z^{n+j+1}\|^2}{\|z^j\|^2}$, $n,j\in \mathbb Z_+$.
Hence, applying Lemma $\ref{lemMobnec1},$ we see that there
exists a constant $c^\prime>0$ such that for all $n, j$ $\in \mathbb Z_+,$
$$( c_{0,j}+c_{1,j}+\cdots+ c_{n,j}) \leq c^\prime (n+1)^2.$$
Since $b_n=\frac{1}{\|z^n\|^2}$, $n\in \mathbb Z_+$, putting $j=0$ in the above inequality, we obtain 
$$\left(\frac{1}{b_1}+\frac{1}{b_2}+\cdots+\frac{1}{b_{n+1}}\right)
\leq \frac{c^\prime}{b_0}(n+1)^2,\; n\in \mathbb Z_+.$$
Therefore
$$\sum_{n=0}^\infty \frac{n+1}{\frac{1}{b_1}+\frac{1}{b_2}+\cdots+\frac{1}{b_{n+1}}}\geq \frac{b_0}{c^\prime}\sum_{n=0}^\infty \frac{1}{n+1}.
$$
Consequently, $\sum_{n=0}^\infty \frac{n+1}{\frac{1}{b_1}+\frac{1}{b_2}+\cdots+\frac{1}{b_{n+1}}}=\infty$.
Hence, by Lemma $\ref{lemMobnec2},$ we conclude that
$\sum_{n=0}^\infty b_n 
= \infty.$\\
\rm (ii) Suppose that $M_z$ on $(\hl, K)$ is M\"obius bounded, and if possible, let  $\{nb_n \}_{n\in \mathbb Z_+}$ is bounded.
Then by Lemma $\ref{Mbprop1},$ there exists a constant $c>0$ such that 
$$\sup_{a\in \mathbb D}\Big(\;\frac{K(a,a)}{c}-|a|\;\Big)< \infty.$$
Therefore $\sup_{a\in \D}K(a,a)$ is also finite.
Since Abel summation method is totally regular (see\cite[page 10]{Divergentseries}), it follows that $\sum_{n=0}^\infty b_n$ is finite. By part $\rm(i)$ of this Theorem, this  is a contradiction to the assumption that $M_z$ is M\"obius bounded. Hence the sequence 
$\big\{n b_n\big\}_{n\in \z}$ is unbounded, completing the proof. 
\end{proof}

\begin{corollary}
Let $K(z,w)=\sum_{n=0}^\infty b_n(z\bar{w})^n$, $b_n>0$, be a positive definite kernel on $\D\times \D$. Suppose that  $b_n\sim (n+1)^{\gamma}$ for some  $\gamma\in\mathbb R$. 
Then the multiplication operator $M_z$ on $(\mathcal{H}, K)$ is M\"obius bounded if and only if $\gamma > -1.$ In particular, the multiplication operator $M_z$ on the Dirichlet space is not M\"obius bounded.
\end{corollary}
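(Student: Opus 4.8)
The plan is to reduce the corollary to Theorem \ref{thmMobnec1} and to Corollary \ref{cormobinv}, treating the two directions separately. For the ``if'' direction, suppose $\gamma > -1$. By the hypothesis $b_n \sim (n+1)^\gamma$, the reproducing kernel $K$ satisfies $\|z^n\|^2_{(\hl, K)} = 1/b_n \sim (n+1)^{-\gamma}$, which matches the weight sequence of the kernel $K_{(\gamma)}$ defined in \eqref{eqnK_nu}. Thus $M_z$ on $(\hl, K)$ is unitarily equivalent to a weighted shift whose weights are comparable to those of $M_z$ on $(\hl, K_{(\gamma)})$; in fact, by \cite[Theorem $2^\prime$]{Shields}, the comparability $b_n \sim (n+1)^\gamma$ ensures that $M_z$ on $(\hl, K)$ is similar to $M_z$ on $(\hl, K_{(\gamma)})$. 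Since $\gamma > -1$, Corollary \ref{cormobinv} shows that $M_z^*$ on $(\hl, K_{(\gamma)})$ is similar to the homogeneous operator $M_z^*$ on $\hl^{(\gamma+1)}$, hence $M_z$ on $(\hl, K_{(\gamma)})$ is similar to a homogeneous operator and is therefore M\"obius bounded, as recorded in the introduction. M\"obius boundedness is preserved under similarity (via the conjugation identity \eqref{eqnsimtohomo}), so $M_z$ on $(\hl, K)$ is M\"obius bounded.

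For the ``only if'' direction, I would prove the contrapositive: assume $\gamma \leq -1$ and show $M_z$ is not M\"obius bounded. When $\gamma < -1$, the comparability $b_n \sim (n+1)^\gamma$ gives $\sum_{n=0}^\infty b_n \sim \sum_{n=0}^\infty (n+1)^\gamma < \infty$, so condition \rm(i) of Theorem \ref{thmMobnec1} fails, and hence $M_z$ cannot be M\"obius bounded. When $\gamma = -1$, the sequence $\{n b_n\}$ satisfies $n b_n \sim n(n+1)^{-1}$, which is bounded, so condition \rm(ii) of Theorem \ref{thmMobnec1} fails; again $M_z$ is not M\"obius bounded. This exhausts the range $\gamma \leq -1$.

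For the final assertion about the Dirichlet space, recall from the Notation that the Dirichlet kernel is $K_{(-1)}$, so its coefficients are $b_n = (n+1)^{-1}$, which trivially satisfy $b_n \sim (n+1)^{\gamma}$ with $\gamma = -1$. Since $\gamma = -1 \not> -1$, the first part of the corollary gives that $M_z$ on the Dirichlet space is not M\"obius bounded.

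The main obstacle I anticipate is the ``if'' direction, specifically invoking \cite[Theorem $2^\prime$]{Shields} correctly to pass from the comparability of weights $b_n \sim (n+1)^\gamma$ to genuine similarity of the shifts (rather than mere unitary equivalence of the associated kernels only in the exact-equality case); the delicate point is that Shields' theorem characterizes similarity of weighted shifts precisely through such comparability of the partial products of weights, and one must check that the hypothesis $b_n \sim (n+1)^\gamma$ yields the comparability in the form required there. By contrast, the ``only if'' direction is essentially immediate once Theorem \ref{thmMobnec1} is in hand, since it is just a matter of checking which of the two necessary conditions is violated in each subcase of $\gamma \leq -1$.
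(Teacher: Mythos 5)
Your proof is correct and follows essentially the same route as the paper: Shields' Theorem $2^\prime$ plus Corollary \ref{cormobinv} for the case $\gamma > -1$, and Theorem \ref{thmMobnec1} for $\gamma \leq -1$. The only (harmless) difference is that the paper transfers to $(\hl, K_{(\gamma)})$ by similarity at the outset and applies Theorem \ref{thmMobnec1}\,(ii) there, whereas you apply Theorem \ref{thmMobnec1} directly to $K$ using $b_n \sim (n+1)^{\gamma}$, which works just as well.
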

\begin{proof}
It follows from \cite[Theorem $2^\prime$]{Shields} that  $M_z$ on $(\hl, K)$ is similar to the operator $M_z$ on $(\hl, K_{(\gamma)})$. 
Since similarity preserves M\"obius boundedness, it suffices to show that $M_z$ on $(\hl, K_{(\gamma)})$ is M\"obius bounded if and only if $\gamma>-1$. If $\gamma>-1$, then by Corollary \ref{cormobinv}, $M_z$ on $(\hl, K_{(\gamma)})$
is similar to a homogeneous operator, and therefore is  M\"obius bounded. 
If $\gamma\leq -1$, then note that the sequence $\{n.(n+1)^{\gamma}\}_{n\in \z}$ is bounded. Hence, by Theorem \ref{thmMobnec1} \rm (ii),  $M_z$ on $(\hl, K_{(\gamma)})$ is not M\"obius bounded.  
\end{proof}

%

The following theorem shows that Shields' conjecture has an affirmative answer in a smaller class of weighted shifts containing the non-contractive homogeneous operators in $B_1(\D)$.
\begin{theorem}

Let $K(z,w)= \sum_{n=0}^\infty  b_n(z\bar{w})^n$ be a positive 
definite kernel on $\D \times \D$. Assume that the sequence $\big\{b_n\big\}_{n\in \mathbb Z_+}$ is decreasing. If the multiplication operator $M_z$ on 
$(\mathcal H, K) $ is M\"obius bounded, then there exists a constant $ c>0 $ such that
$\|M_z^n\|\leq c(n+1)^{\frac{1}{2}}$ for all $n\in \z.$
\end{theorem}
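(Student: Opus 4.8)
The plan is to recognize that powers of $M_z$ act as pure shifts on the orthogonal monomial basis, reduce the norm estimate to a pointwise bound on the ratios $b_j/b_{j+p}$, and then extract that pointwise bound from the quadratic partial-sum estimate already available in the proof of Theorem \ref{thmMobnec1}, using the monotonicity of $\{b_n\}$ as the decisive ingredient. First I would record that, since $K(z,w)=\sum_n b_n(z\bar w)^n$, the monomials $z^n$ are mutually orthogonal with $\|z^n\|^2=1/b_n$, and that $M_z^p$ sends $z^j$ to $z^{j+p}$. Hence $M_z^p$ is (unitarily equivalent to) a weighted shift, and its norm is the supremum of its weights:
\[
\|M_z^p\|^2=\sup_{j\in\z}\frac{\|z^{j+p}\|^2}{\|z^j\|^2}=\sup_{j\in\z}\frac{b_j}{b_{j+p}}.
\]
Thus it suffices to prove a bound of the form $b_j/b_{j+p}\le c^2(p+1)$, uniform in $j$.

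Next I would invoke M\"obius boundedness exactly as in the proof of Theorem \ref{thmMobnec1}: the estimate \eqref{eqnthmMobnec1} holds for every $j$, where $c_{n,j}=\|z^{n+j+1}\|^2/\|z^j\|^2=b_j/b_{n+j+1}$, and an application of Lemma \ref{lemMobnec1} with $\alpha=2$ produces a constant $c'>0$, \emph{independent of $j$}, such that $c_{0,j}+\cdots+c_{n,j}\le c'(n+1)^2$. Rewriting the partial sum in terms of the ratios, this reads
\[
\sum_{k=1}^{m}\frac{b_j}{b_{j+k}}\le c'\,m^2\qquad\text{for all }j\in\z,\ m\ge1.
\]

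The crux, and the step I expect to be the main obstacle, is converting this \emph{quadratic} bound on a window of $m$ ratios into a \emph{linear} bound on a single ratio; this is where the decreasing hypothesis is indispensable. Since $\{b_n\}$ is decreasing, the terms $t_k:=b_j/b_{j+k}$ are nondecreasing in $k$, so averaging over the top half of a window of length $2p$ loses only a constant factor: the $p+1$ terms with $p\le k\le 2p$ each dominate $t_p$, whence
\[
(p+1)\,\frac{b_j}{b_{j+p}}=(p+1)\,t_p\le\sum_{k=p}^{2p}t_k\le\sum_{k=1}^{2p}t_k\le c'(2p)^2=4c'p^2 .
\]
This gives $b_j/b_{j+p}\le 4c'p\le 4c'(p+1)$, uniformly in $j$. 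Combining with the first step yields $\|M_z^p\|^2\le 4c'(p+1)$, hence $\|M_z^p\|\le 2\sqrt{c'}\,(p+1)^{1/2}$ for $p\ge1$; enlarging the constant to $c=\max\{1,2\sqrt{c'}\}$ absorbs the trivial case $p=0$ and completes the proof. The only delicate point is ensuring the constant $c'$ from Lemma \ref{lemMobnec1} is genuinely uniform in $j$, which it is, since it depends only on the ambient constant $c$ of M\"obius boundedness and on $\alpha=2$.
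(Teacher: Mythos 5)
Your proof is correct, and while it starts from the same ingredients as the paper (the weighted-shift formula $\|M_z^p\|^2=\sup_j \|z^{j+p}\|^2/\|z^j\|^2$, the generating-function estimate \eqref{eqnthmMobnec1} coming from M\"obius boundedness, and Lemma \ref{lemMobnec1}), the crucial step where the decreasing hypothesis enters is genuinely different. The paper multiplies \eqref{eqnthmMobnec1} by $(1-x)$ to get
\begin{equation*}
c_{0,j}+\sum_{n=1}^\infty (c_{n,j}-c_{n-1,j})x^n\leq \frac{c}{1-x},
\end{equation*}
uses monotonicity of $\{b_n\}$ only to guarantee that the coefficients $c_{n,j}-c_{n-1,j}$ are non-negative (so that Lemma \ref{lemMobnec1} applies with $\alpha=1$), and then the partial sums telescope directly to $c_{n,j}\leq c'(n+1)$. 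You instead keep the quadratic bound $\sum_{k=0}^{n}c_{k,j}\leq c'(n+1)^2$ (Lemma \ref{lemMobnec1} with $\alpha=2$, exactly as in the proof of Theorem \ref{thmMobnec1}~(i)) and use monotonicity combinatorially: since the ratios $t_k=b_j/b_{j+k}$ are nondecreasing in $k$, the top half of a window of length $2p$ dominates $(p+1)t_p$, yielding $t_p\leq 4c'p$. Both arguments are elementary and both correctly track the uniformity of $c'$ in $j$; the paper's Abel-summation trick is slightly slicker and avoids the window-doubling, while your averaging argument is a standard, robust way of upgrading an average bound to a pointwise bound for monotone sequences and arguably makes the role of the decreasing hypothesis more transparent.
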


\begin{proof}
It suffices to show that $\|M_z^{n+1}\|\leq c(n+1)^{\frac{1}{2}},\;\;n\in \z$.
By a straightforward computation, we see that
\begin{equation}\label{eqnshldconj}
\|M_z^{n+1}\|^2=\sup_{j\in \z}\frac{\|z^{n+j+1}\|^2}{\|z^j\|^2},\;\;n\in \z.
\end{equation}
From $\eqref{eqnthmMobnec1},$ we already have that 
$$\sum_{n=0}^\infty c_{n,j}x^n \leq \frac{c}{(1-x)^2},\;\;
x\in [0,1),$$
where $c_{n,j}=\frac{\|z^{n+j+1}\|^2}{\|z^j\|^2}.$ Multiplying both sides 
by $1-x$, we see that 
$$c_{0,j}+\sum_{n=1}^\infty (c_{n,j}-c_{n-1,j})x^n\leq \frac{c}{1-x},\;\;x\in [0,1), 
j \in \mathbb Z_+.$$
Since $\|z^n\|^2=\frac{1}{b_n}$ and $\{b_n\}_{n\in \mathbb Z_+}$ is decreasing, the sequence $\{c_{n,j}\}_{n \in \mathbb Z_+}$ is increasing. 
Consequently, $(c_{n,j}-c_{n-1,j})\geq 0$ for all $n\geq 1, j\geq 0$.
Therefore, using Lemma $\ref{lemMobnec1},$ we conclude that
there exists a constant $c^\prime >0$ (independent of $n$ and $j$) such that 
$$c_{0,j}+(c_{1,j}-c_{0,j})+\cdots+(c_{n,j}-c_{n-1,j})\leq c^\prime (n+1)\;\;\mbox{for all} \;\; n,j \in \mathbb Z_+.$$
Equivalently,
$$c_{n,j}\leq c^\prime (n+1)\;\;\mbox{for all} \;\; n,j \in \mathbb Z_+.$$
Hence, in view of \eqref{eqnshldconj}, we conclude that
$\|M_z^{n+1}\|^2\leq c^\prime (n+1)$ for all $n\in \z$, completing the proof.
\end{proof}
\subsection{M\"obius bounded quasi-homogeneous operators}
In this subsection we identify all quasi-homogeneous operators which are M\"obius bounded. We start with the following theorem which gives a necessary condition for a class of operators in $\mathcal FB_2(\D)$ to be M\"obius bounded. 

\begin{theorem}\label{thmquasmob}
Let $0< \lambda\leq \mu$, and let $\psi$ be a non-zero function in ${\rm Mult} (\hll, \hlm)$. Let $T$ be the operator $\begin{pmatrix}
M_z^* & M_\psi^*\\
0 & M_z^*
\end{pmatrix}
$ on $\hll\oplus \hlm$. If $T$ is M\"obius bounded, then $\mu-\lambda \geq 2.$
\end{theorem}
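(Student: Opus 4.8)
The plan is to reduce the statement to a lower bound on the norm of a single off-diagonal multiplier, and then to invoke a boundary-uniqueness argument. First I would pass to the adjoint: since $(\varphi(T))^{*}=\hat\varphi(T^{*})$ with $\hat\varphi(z)=\overline{\varphi(\bar z)}\in\mb$, and $\hat\varphi$ ranges over all of $\mb$ as $\varphi$ does, one has $\sup_{\varphi\in\mb}\|\varphi(T)\|=\sup_{\varphi\in\mb}\|\varphi(T^{*})\|$; thus $T$ is M\"obius bounded if and only if $T^{*}$ is. As already computed in the proof of Proposition \ref{thmweakhominFB_2},
\[
\varphi(T^{*})=\begin{pmatrix} M_\varphi & 0\\ M_{\psi\varphi'} & M_\varphi\end{pmatrix}\qquad\text{on }\hll\oplus\hlm .
\]
The $(2,1)$-entry is the compression $P_{\hlm}\,\varphi(T^{*})|_{\hll}$ of $\varphi(T^{*})$, so $\|M_{\psi\varphi'}\|_{\hll\to\hlm}\le\|\varphi(T^{*})\|$. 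Hence the M\"obius boundedness of $T$ yields a finite constant $B$ with $\sup_{\varphi\in\mb}\|M_{\psi\varphi'}\|_{\hll\to\hlm}\le B$, and the whole problem reduces to squeezing $\mu-\lambda\ge2$ out of this one uniform multiplier bound.

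Next I would use the reproducing kernels to bound multiplier norms from below. For any $g\in{\rm Mult}(\hll,\hlm)$ one has $M_g^{*}K^{(\mu)}(\cdot,w)=\overline{g(w)}\,K^{(\lambda)}(\cdot,w)$, and since $\|M_g\|=\|M_g^{*}\|\ge\|M_g^{*}K^{(\mu)}(\cdot,w)\|/\|K^{(\mu)}(\cdot,w)\|$, testing against the kernel at $w$ gives
\[
\|M_g\|_{\hll\to\hlm}\ \ge\ |g(w)|\,\frac{\|K^{(\lambda)}(\cdot,w)\|_{\hll}}{\|K^{(\mu)}(\cdot,w)\|_{\hlm}}=|g(w)|\,(1-|w|^2)^{(\mu-\lambda)/2},\qquad w\in\D ,
\]
using $K^{(\lambda)}(w,w)=(1-|w|^2)^{-\lambda}$. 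Taking $g=\psi\,\varphi_{\theta,a}'$ and evaluating at the special point $w=a$, where $\varphi_{\theta,a}'(a)=e^{i\theta}(1-|a|^2)^{-1}$ by \eqref{eqnderphi}, this becomes
\[
\|M_{\psi\varphi_{\theta,a}'}\|_{\hll\to\hlm}\ \ge\ |\psi(a)|\,(1-|a|^2)^{(\mu-\lambda-2)/2},\qquad a\in\D .
\]
Combining with the previous step, I obtain $|\psi(a)|\,(1-|a|^2)^{(\mu-\lambda-2)/2}\le B$ for all $a\in\D$.

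Finally I would argue by contradiction. Suppose $\mu-\lambda<2$ and set $\delta:=(2-\mu+\lambda)/2>0$; the last inequality then reads $|\psi(a)|\le B\,(1-|a|^2)^{\delta}$ on $\D$. Writing $\psi(z)=z^{m}\eta(z)$ with $\eta(0)\neq0$ and applying Jensen's inequality on the circle $|z|=r$,
\[
\log|\eta(0)|\ \le\ \frac{1}{2\pi}\int_{0}^{2\pi}\log\big|\eta(re^{i\theta})\big|\,d\theta\ \le\ \log B+\delta\log(1-r^{2})-m\log r ,
\]
and letting $r\to1$ forces the right-hand side to $-\infty$, contradicting $\eta(0)\neq0$. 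Hence the nonzero holomorphic function $\psi$ cannot satisfy such a uniform boundary decay, so $\mu-\lambda\ge2$. The only genuinely delicate point is this last step: one must rule out that $\psi$ decays like a fixed positive power of $1-|a|^{2}$ uniformly over all of $\D$, and the Jensen (equivalently F.\ and M.\ Riesz) input is precisely the boundary-uniqueness fact that forbids a nonzero holomorphic function from vanishing at the boundary in this averaged sense.
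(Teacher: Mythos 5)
Your proposal is correct and follows essentially the same route as the paper's proof: reduce to a uniform bound on the off-diagonal multiplier $M_{\psi\varphi'}$ via the lower-triangular form of $\varphi(T^*)$, test against reproducing kernels, and specialize to $w=a$ to obtain $|\psi(a)|\leq B\,(1-|a|^2)^{(2-\mu+\lambda)/2}$ on $\D$. The only divergence is the last step, where the paper deduces $\psi\equiv 0$ directly from the maximum modulus principle while you invoke Jensen's formula; both are valid (though the F.~and M.~Riesz reference is not really what is being used), so this is a cosmetic rather than substantive difference.
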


\begin{proof}
It suffices to show that if 
$T^*$ is M\"obius bounded, then $\mu-\lambda \geq 2.$
Since $\sigma (M_z) = \bar{\mathbb D}$ on both $\hll$ and $\hlm,$ 
it is easily verified that $\sigma(T)=\bar{\mathbb D}$.
As before, for $\varphi\in \mb$, we have
$$\varphi(T^*)= \begin{pmatrix}
M_{\varphi}& 0\\
M_{\psi\varphi^\prime} & M_{\varphi}
\end{pmatrix} \mbox{~~on}~~ \hll\oplus\hlm.$$
Observe that for an operator of the form 
$\begin{psmallmatrix}
A&0\\
B&C
\end{psmallmatrix}$, we have 
$\|B\|\leq \left\|\begin{psmallmatrix}
A&0\\
B&C
\end{psmallmatrix}\right\|\leq (\|A\|+\|B\|+\|C\|).$
Therefore, we have
\begin{equation*}
\|M_{\psi\varphi^\prime}\|_{\hll\to\hlm}\leq \|\varphi(T^*)\|\leq \|M_{\varphi}\|_{\hll}+\|M_{\varphi}\|_{\hlm}+ \|M_{\psi\varphi^\prime}\|_{\hll\to\hlm}.
\end{equation*}
Since the multiplication operator $M_z$ on $\hll$ as well as on $\hlm$ is M\"obius bounded, it follows from the above inequality that $T^*$ is M\"obius bounded if and only if $\sup_{\varphi\in \mb}\|M_{\psi\varphi'}\|_{\hll\to\hlm}$ is finite.
Now for all $w$ in $\D$, we have
\begin{align*}
\|M_{\psi\varphi^\prime}\|_{\hll\to\hlm}^2=\|(M_{\psi\varphi^\prime})^*\|_{\hlm\to\hll}^2& \geq 
\frac{\|(M_{\psi\varphi^\prime})^*(K^{(\mu)}(\cdot,w))\|^2}{\|K^{(\mu)}(\cdot,w)\|^2}\\
&=|\psi(w)\varphi^\prime(w)|^2\frac{\|K^{(\lambda)}(\cdot,w)\|^2}{\|K^{(\mu)}(\cdot,w)\|^2}\\
&=|\psi(w)\varphi^\prime(w)|^2 (1-|w|^2)^{\mu-\lambda}.
\end{align*}

Note that $\ft^\prime(w)=e^{i\theta}\frac{1-|a|^2}{(1-\bar{a}w)^2}$, $w\in \D$.
Thus, if $T^*$ is M\"obius bounded, then there exists 
a constant $c>0$ such that
$$\sup_{a,w\in \mathbb{D}}\frac{|\psi(w)|^2(1-|a|^2)^2}{|1-
\bar a w|^4} (1-|w|^2)^{\mu-\lambda}\leq c .$$ 
Taking $a=w$, we obtain
\begin{equation}\label{eqnmob_bnd}
|\psi(w)|^2 \leq c(1-|w|^2)^{-(\mu-\lambda-2)},\;\;w\in \D.
\end{equation}
If possible, assume that $\mu-\lambda-2<0$. Then, by an application of maximum modulus principle, it follows from \eqref{eqnmob_bnd} that $\psi$ is identically zero, which is a contradiction to our assumption that $\psi$ is non-zero. Hence $\mu-\lambda \geq 2.$ 
\end{proof}

\subsubsection{Quasi-homogeneous operators}
Let $n\geq 1$ be a positive integer, and let  $0<\lambda_0\leq \lambda_1\leq \ldots\leq \lambda_{n-1}$ be $n$ positive real numbers such that the difference $\lambda_{i+1}-\lambda_i$ is a fixed number $\Lambda$ for all $0\leq i\leq n-2.$ 
Let $T_i$, $0\leq i\leq n-1$, denote the adjoint $M_z^*$ of the multiplication operator by the coordinate function $z$ on $\hl^{(\lambda_i)}$. Furthermore, let $S_{i,j}, 0\leq i < j \leq n-1$, be the linear map given by the formula
$$S_{i,j}(K^{(\lambda_j)}(\cdot,w))=m_{i,j} {\bar \partial}^{(j-i-1)}
K^{(\lambda_i)}(\cdot, w),$$
where  $m_{i,j}$'s are arbitrary complex numbers. Note that if $S_{i,j}$ defines a bounded 
linear operator from $\hl^{(\lambda_j)}$
to $\hl^{(\lambda_{i})}$, then $(S_{i,j})^*(f)=\overbar {m}_{i,j}f^{(j-i-1)},\;f\in \hl^{(\lambda_{i})}$.

A quasi-homogeneous operator $T$ of rank $n$ (see \cite{quasihomogeneous}) is a bounded linear operator of the form
\begin{equation}\label{eqnquasihomo}
\begingroup
\begin{pmatrix}
\renewcommand*{\arraystretch}{1.5}
T_0& S_{0,1}& S_{0,2}&\cdots&S_{0, n-1}\\
0&T_1& S_{1,2}&\cdots & S_{1,n-1}\\
\vdots&\ddots&\ddots&\ddots&\vdots\\
0&\cdots&0&T_{n-2}&S_{n-2,n-1}\\
0&0&\cdots&0&T_{n-1}
\end{pmatrix}
\endgroup
\end{equation}
on $\mathcal H^{(\lambda_0)}\oplus \mathcal H^{(\lambda_1)}\oplus 
\cdots \oplus \mathcal H^{(\lambda_{n-1})}.$ 
It is known that  the class of  quasi-homogeneous operators of rank $n$ contains all homogeneous operators in $B_n(\D)$. For a quasi-homogeneous operator $T$, let $\Lambda(T)$ denote the fixed difference $\Lambda$. 
When $\Lambda(T)\geq 2$, a repeated application of  Lemma \ref{lemdiffbdd} shows that each $S_{i, j}$, $0\leq i < j \leq n-1$, is bounded from $\hl^{(\lambda_i)}$ to $\hl^{(\lambda_j)}$, and consequently, an operator of the form \eqref{eqnquasihomo} is also bounded. In case of $\Lambda(T)<2$, the boundedness criterion for $T$ was obtained in terms of $\Lambda(T),n$ and $m_{i,j}$'s in \cite[Proposition 3.2]{quasihomogeneous}.

 It is easily verified that
a quasi-homogeneous operator $T$ satisfies $T_iS_{i,i+1}=S_{i,i+1}T_{i+1}$, $0\leq i\leq n-2$. Therefore $T$ belongs to the class $\mathcal FB_{n}(\D)\subseteq B_{n}(\D)$ (see \cite{Flagstructure}).

The theorem given below describes all quasi-homogeneous operators which are M\"obius bounded.
\begin{theorem}\label{thmmobqu}
A quasi-homogeneous operator $T$ is M\"obius bounded if and only if $\Lambda(T)\geq 2.$
\end{theorem}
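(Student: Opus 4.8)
The plan is to prove the two implications separately: the necessity $\Lambda(T)\ge 2$ by compressing to the first two summands and invoking the rank-two Theorem~\ref{thmquasmob}, and the sufficiency by an explicit blockwise computation of $\varphi(T^*)$ whose entries I control through the differentiation estimates of Lemma~\ref{lemdiffbdd}.

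First, suppose $T$ is M\"obius bounded. Being block upper triangular, $T$ leaves $V:=\mathcal H^{(\lambda_0)}\oplus\mathcal H^{(\lambda_1)}$ invariant; since the resolvent of a block upper triangular operator is again block upper triangular, $V$ is invariant under $(\zeta-T)^{-1}$ for $\zeta\notin\sigma(T)=\overbar{\D}$, hence under $\varphi(T)$, and $\varphi(T)|_V=\varphi(T|_V)$ for every $\varphi\in\mb$. Thus $\|\varphi(T|_V)\|\le\|\varphi(T)\|$, so $T|_V=\left(\begin{smallmatrix}M_z^*&S_{0,1}\\0&M_z^*\end{smallmatrix}\right)$ is M\"obius bounded as well. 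As $(S_{0,1})^*f=\overbar m_{0,1}f$, this $T|_V$ is exactly the operator of Theorem~\ref{thmquasmob} with $\lambda=\lambda_0$, $\mu=\lambda_1$ and symbol the non-zero constant $\overbar m_{0,1}$ (the super-diagonal entries of a member of $\mathcal FB_n(\D)$ are non-zero). Theorem~\ref{thmquasmob} then forces $\lambda_1-\lambda_0=\Lambda(T)\ge 2$.

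Conversely, assume $\Lambda(T)\ge 2$. Since $\varphi(T)^*=\hat\varphi(T^*)$ with $\hat\varphi(z)=\overline{\varphi(\bar z)}\in\mb$, it suffices to bound $\|\varphi(T^*)\|$ uniformly over $\mb$. Write $T^*=\mathbf M+N$, where $\mathbf M=\bigoplus_i M_z$ is block diagonal and $N$ is block strictly lower triangular with $(i,j)$ entry $\overbar m_{j,i}D^{\,i-j-1}$, $D$ denoting differentiation. Inserting the finite expansion $(\zeta-T^*)^{-1}=\sum_{k\ge0}[(\zeta-\mathbf M)^{-1}N]^k(\zeta-\mathbf M)^{-1}$ into $\varphi(T^*)=\frac1{2\pi i}\oint\varphi(\zeta)(\zeta-T^*)^{-1}\,d\zeta$ and using $[D,M_z]=I$ to move all factors of $D$ to the right, together with $\frac1{2\pi i}\oint\varphi(\zeta)(\zeta-M_z)^{-p}\,d\zeta=\tfrac1{(p-1)!}\varphi^{(p-1)}(M_z)$, one finds that the $(i,j)$ block of $\varphi(T^*)$ is a finite linear combination of the operators $M_{\varphi^{(l)}}D^{(i-j)-l}$, $0\le l\le i-j$, with coefficients built from the $m$'s and binomial factors but independent of $\varphi$. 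It therefore suffices to bound each $M_{\varphi^{(l)}}D^{(i-j)-l}$ uniformly.

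Here $D^{(i-j)-l}$ maps $\mathcal H^{(\lambda_j)}$ into $\mathcal H^{(\lambda_j+2((i-j)-l))}$ with a bound independent of $\varphi$ (iterate Lemma~\ref{lemdiffbdd}), after which $M_{\varphi^{(l)}}$ acts into $\mathcal H^{(\lambda_i)}$ across the weight gap $(i-j)(\Lambda(T)-2)+2l\ge 2l$; for $l=0$ this last factor is just the M\"obius bounded $M_z$ on $\mathcal H^{(\lambda_j+2(i-j))}$ followed by a bounded inclusion. The heart of the matter is hence the multiplier estimate $\sup_{\varphi\in\mb}\|M_{\varphi^{(l)}}\|_{\mathcal H^{(\alpha)}\to\mathcal H^{(\beta)}}<\infty$ whenever $\beta-\alpha\ge2l$. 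Reducing to $\beta=\alpha+2l$ by the bounded inclusion $\mathcal H^{(\alpha+2l)}\hookrightarrow\mathcal H^{(\beta)}$ and writing $\varphi_{\theta,a}^{(l)}=e^{i\theta}l!(1-|a|^2)\overbar a^{\,l-1}(1-\overbar a z)^{-(l+1)}$, this amounts to proving $\|M_{(1-\overbar a z)^{-(l+1)}}\|_{\mathcal H^{(\alpha)}\to\mathcal H^{(\alpha+2l)}}=O((1-|a|^2)^{-1})$, so that the factor $(1-|a|^2)$ cancels the singularity and leaves a bound uniform in $a,\theta$. I would prove this by induction on $l$ from the identity $l!\,\overbar a^{\,l}(1-\overbar a z)^{-(l+1)}g=D^l\big((1-\overbar a z)^{-1}g\big)-\sum_{k=0}^{l-1}\binom lk k!\,\overbar a^{\,k}(1-\overbar a z)^{-(k+1)}g^{(l-k)}$, the boundedness of $D$, and the base estimate $\|M_{(1-\overbar a z)^{-1}}\|_{\mathcal H^{(\lambda)}}=O((1-|a|^2)^{-1})$. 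I expect this sharp power of $(1-|a|^2)^{-1}$ to be the main obstacle: the naive weight count, or conjugation by the homogeneity unitaries of $\mathcal H^{(\lambda)}$, yields only the weaker $O((1-|a|^2)^{-l})$, which fails for $l\ge2$. Granting the estimate, every block of $\varphi(T^*)$ is uniformly bounded, whence $\sup_{\varphi}\|\varphi(T^*)\|<\infty$ and $T$ is M\"obius bounded.
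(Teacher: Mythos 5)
Your necessity argument is essentially the paper's: both pass to the top-left $2\times 2$ corner and invoke Theorem~\ref{thmquasmob} with the constant symbol $\overbar{m}_{0,1}$ (the paper does this by computing $\varphi(T)$ explicitly and dominating the norm of the corner block by $\|\varphi(T)\|$, you by restricting to the invariant subspace $V$; the devices are interchangeable). Your sufficiency argument is genuinely different. The paper's is one line: for $\Lambda(T)\geq 2$, \cite[Theorem 4.2 (1)]{quasihomogeneous} gives that $T$ is similar to $T_0\oplus\cdots\oplus T_{n-1}$, a direct sum of homogeneous, hence M\"obius bounded, operators, and M\"obius boundedness is a similarity invariant. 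You instead bound $\sup_{\varphi}\|\varphi(T^*)\|$ directly, by normal-ordering each block into combinations of $M_{\varphi^{(l)}}D^{(i-j)-l}$ and proving $\sup_{\varphi\in\mb}\|M_{\varphi^{(l)}}\|_{\mathcal H^{(\alpha)}\to \mathcal H^{(\alpha+2l)}}<\infty$. This is far heavier, but it is self-contained (no appeal to the similarity theorem) and the uniform multiplier estimate is a reusable fact of independent interest. That estimate is indeed true, and more cheaply than by your induction: if $\|h\|_{\mathcal H^{(2l)}}\leq c$, then $c^2K^{(2l)}(z,w)-h(z)\overbar{h(w)}$ is non-negative definite, and multiplying by the kernel $K^{(\alpha)}$ (Schur product) yields $\|M_h\|_{\mathcal H^{(\alpha)}\to\mathcal H^{(\alpha+2l)}}\leq\|h\|_{\mathcal H^{(2l)}}$ by the two-space analogue of Lemma~\ref{lembounded}; a direct coefficient computation then gives $\|(1-\overbar{a}z)^{-(l+1)}\|_{\mathcal H^{(2l)}}\asymp (1-|a|^2)^{-1}$ for every $l\geq 1$, which is exactly your claimed sharp power.

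Two soft spots in your write-up need repair, though both are fixable. First, your justification of $\varphi(T)|_V=\varphi(T|_V)$ --- ``the resolvent of a block upper triangular operator is block upper triangular'' --- is false for Hilbert space operators: with $S$ the unilateral shift, $U=\left(\begin{smallmatrix} S & I-SS^*\\ 0 & S^*\end{smallmatrix}\right)$ is unitary and block upper triangular, while $U^{-1}=U^*$ is not upper triangular. What is true, and what you actually need, is the spectral inclusion $\sigma(T|_V)\subseteq\sigma(T_0)\cup\sigma(T_1)=\overbar{\D}$ (explicit $2\times2$ resolvent formula); then for $\zeta\notin\overbar{\D}$, injectivity of $\zeta-T$ forces $(\zeta-T)^{-1}v=(\zeta-T|_V)^{-1}v\in V$ for $v\in V$, and the Riesz functional calculus gives $\varphi(T)|_V=\varphi(T|_V)$. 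Second, the base case of your induction, $\|M_{(1-\overbar{a}z)^{-1}}\|_{\mathcal H^{(\lambda)}}=O((1-|a|^2)^{-1})$, is immediate only for $\lambda\geq 1$, where $M_z$ is a contraction and the multiplier norm equals the sup norm; quasi-homogeneous operators allow $\lambda_0<1$, where ${\rm Mult}(\mathcal H^{(\lambda)})$ is a proper subalgebra of $H^{\infty}(\D)$ and the sup norm does not control the multiplier norm. The base case is nevertheless true for every $\lambda>0$: from $\tfrac{1-|a|^2}{1-\overbar{a}z}=1+\overbar{a}\,\varphi_{0,a}(z)$ one gets $M_{(1-\overbar{a}z)^{-1}}=\tfrac{1}{1-|a|^2}\big(I+\overbar{a}\,\varphi_{0,a}(M_z)\big)$, and $\sup_{a}\|\varphi_{0,a}(M_z)\|_{\mathcal H^{(\lambda)}}<\infty$ because $M_z$ on $\mathcal H^{(\lambda)}$ is homogeneous; alternatively, the kernel argument above bypasses the induction entirely. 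With these repairs, and the routine but unwritten verification that the coefficients produced by your resolvent expansion are independent of $\varphi$, your proof goes through.
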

\begin{proof}
If $\Lambda(T)\geq 2$, then by \cite[Theorem 4.2 (1)]{quasihomogeneous}, $T$ is similar to the direct sum
$T_0\oplus T_1\oplus\cdots \oplus T_{n-1}.$ Hence $T$ is M\"obius bounded if and only if $T_0\oplus T_1\oplus\cdots \oplus T_{n-1}$ is M\"obius bounded. Note that each $T_i$, $0\leq i\leq n-1$, is  homogeneous and therefore is M\"obius bounded. Consequently, the operator $T_0\oplus T_1\oplus\cdots \oplus T_{n-1}$ is also M\"obius bounded.

To prove the converse, assume that $T$ is M\"obius bounded. 
By a straightforward computation using the intertwining relation $T_i S_{i,i+1}=S_{i,i+1}T_{i+1}$, $0\leq i\leq n-2$, we obtain
\begin{equation*}
\begingroup
\renewcommand*{\arraystretch}{1.3}
\varphi(T)=
\begin{pmatrix}
\varphi(T_0)&S_{0,1}\varphi^\prime(T_1)& *&\cdots&*\\
0&\varphi(T_1)&S_{1,2}\varphi^\prime(T_2)& \cdots & *\\
\vdots&\ddots&\ddots&\ddots&\vdots\\
0&\cdots&0&\varphi(T_{n-2})&S_{n-2,n-1}\varphi^\prime(T_{n-1})\\
0&0& \cdots&0&\varphi(T_{n-1})
\end{pmatrix}
\endgroup
\end{equation*}
on $\mathcal H^{(\lambda_0)}\oplus \mathcal H^{(\lambda_1)}\oplus 
\cdots \oplus \mathcal H^{(\lambda_{n-1})}.$
Since 
$$\|\varphi(T)\|\geq 
\left \|\begin{pmatrix}
\varphi(T_0)&S_{0,1}\varphi^\prime(T_1)\\
0&\varphi(T_1)
\end{pmatrix}\right\| =\left\|
\varphi \begin{pmatrix}
T_0&S_{0,1}\\
0&T_1
\end{pmatrix}\right\|,$$
it follows that the operator $\begin{pmatrix}
T_0&S_{0,1}\\
0& T_1
\end{pmatrix}$
is M\"obius bounded.
Note that this operator is of the form
$\begin{pmatrix}
M_z^*&M_{\psi}^*\\
0 &M_z^*
\end{pmatrix}$ on $\mathcal H^{(\lambda_0)}\oplus \mathcal H^{(\lambda_1)}$,
where $\psi$ is the constant function $\overbar{{m}_{0,1}}.$ Hence, by Theorem $\ref{thmquasmob},$ we conclude that 
$\lambda_1-\lambda_0\geq 2.$ Consequently, $\Lambda(T)\geq 2$,
completing the proof.
\end{proof}

\begin{corollary}
The Shields' conjecture has an affirmative answer for the class of quasi-homogeneous operators.
\end{corollary}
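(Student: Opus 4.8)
The plan is to reduce the corollary to its homogeneous building blocks via Theorem~\ref{thmmobqu} together with the similarity result of \cite{quasihomogeneous}, and then to verify the Shields bound by a direct weight estimate. First, suppose $T$ is a M\"obius bounded quasi-homogeneous operator. By Theorem~\ref{thmmobqu}, M\"obius boundedness forces $\Lambda(T)\geq 2$, and hence by \cite[Theorem 4.2 (1)]{quasihomogeneous} the operator $T$ is similar to the direct sum $T_0\oplus T_1\oplus\cdots\oplus T_{n-1}$, where $T_i=M_z^*$ on $\mathcal H^{(\lambda_i)}$. Writing $T=L\big(\bigoplus_i T_i\big)L^{-1}$ for some bounded invertible $L$, we have $T^m=L\big(\bigoplus_i T_i\big)^mL^{-1}$, so $\|T^m\|\leq \|L\|\,\|L^{-1}\|\,\max_{0\leq i\leq n-1}\|T_i^m\|$. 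Thus it suffices to produce, for each fixed $i$, a constant $c_i>0$ with $\|T_i^m\|\leq c_i(m+1)^{1/2}$ for all $m\in\z$.

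Second, I would estimate $\|T_i^m\|=\|M_z^m\|$ on $\mathcal H^{(\lambda_i)}$ directly. As in \eqref{eqnshldconj}, since $M_z$ is a weighted shift one has $\|M_z^m\|^2=\sup_{j\in\z}\|z^{j+m}\|^2/\|z^j\|^2$. By \eqref{eqnlambdnu} the norms satisfy $\|z^n\|^2_{\mathcal H^{(\lambda_i)}}\sim (n+1)^{1-\lambda_i}$, so there is a constant $C_i>0$ with $\|z^{j+m}\|^2/\|z^j\|^2\leq C_i\big((j+m+1)/(j+1)\big)^{1-\lambda_i}$ for all $j$. When $\lambda_i\geq 1$ the exponent $1-\lambda_i$ is non-positive, this bound is at most $C_i$, and $\|M_z^m\|$ stays uniformly bounded (indeed $M_z$ is then a contraction); when $0<\lambda_i<1$ the bounding quantity $C_i\big((j+m+1)/(j+1)\big)^{1-\lambda_i}$ is decreasing in $j$ with maximal value $C_i(m+1)^{1-\lambda_i}$ at $j=0$, whence $\|M_z^m\|^2\leq C_i(m+1)^{1-\lambda_i}$. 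In either case $\|M_z^m\|\leq \sqrt{C_i}\,(m+1)^{(1-\lambda_i)/2}$, and since $\lambda_i>0$ the exponent $(1-\lambda_i)/2$ is strictly smaller than $\tfrac12$; in particular $\|T_i^m\|\leq c_i(m+1)^{1/2}$.

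Finally, combining the finitely many summand bounds with the similarity estimate from the first step yields $\|T^m\|\leq c(m+1)^{1/2}$ for all $m\in\z$, which is exactly the inequality predicted by Shields, so the conjecture holds throughout the quasi-homogeneous class. The conceptual heart of the argument is the reduction to a direct sum of homogeneous operators, which is already supplied by Theorem~\ref{thmmobqu} together with \cite[Theorem 4.2 (1)]{quasihomogeneous}; the only genuine computation is the power-norm estimate for $M_z$ on $\mathcal H^{(\lambda)}$, and the single point that must be checked carefully is that the positivity $\lambda_i>0$ of every weight is precisely what keeps each growth exponent at most $\tfrac12$, leaving room to spare in the target bound.
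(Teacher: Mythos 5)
Your proof is correct and follows essentially the same route as the paper: Theorem \ref{thmmobqu} forces $\Lambda(T)\geq 2$, then \cite[Theorem 4.2 (1)]{quasihomogeneous} gives similarity to $T_0\oplus T_1\oplus\cdots\oplus T_{n-1}$, and the Shields bound transfers through the similarity with the constant $\|L\|\|L^{-1}\|$. The only difference is that where the paper cites \cite[section 7.2]{homogeneoussurvey} for the estimate on the homogeneous summands, you verify it directly from the weights $\|z^n\|^2_{\mathcal H^{(\lambda)}}\sim (n+1)^{1-\lambda}$, obtaining $\|T_i^m\|\leq c_i(m+1)^{(1-\lambda_i)/2}\leq c_i(m+1)^{1/2}$, which is a correct and self-contained filling-in of that step.
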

\begin{proof}First note that, by Theorem 
\ref{thmmobqu}, a quasi-homogeneous operator $T$  is M\"{o}bius  bounded if and only if $\Lambda(T) \geq 2$. Second,  if $\Lambda(T) \geq 2$, then by \cite[Theorem 4.2 (1)]{quasihomogeneous}, $T$ is  similar to $T_0\oplus T_1\oplus \cdots\oplus T_{n-1}$.  Shields' conjecture is easily verified for these operators using the explicit weights (see \cite[section 7.2]{homogeneoussurvey}). Therefore, its validity for $T$ follows via the similarity. 
\end{proof} 

\begin{corollary}
A quasi-homogeneous operator $T$ is M\"obius bounded if and only if it is 
similar to a homogeneous operator.
\end{corollary}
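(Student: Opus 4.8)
The plan is to establish both implications by combining the characterization in Theorem \ref{thmmobqu} with the similarity theorem of \cite{quasihomogeneous} and the elementary observation that a direct sum of homogeneous operators is again homogeneous.

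The converse direction is immediate: if $T$ is similar to a homogeneous operator, then the computation \eqref{eqnsimtohomo} in the introduction shows directly that $T$ is M\"obius bounded, so nothing new is required here.

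For the forward direction I would argue as follows. Assuming $T$ is M\"obius bounded, Theorem \ref{thmmobqu} gives $\Lambda(T)\geq 2$, and then \cite[Theorem 4.2 (1)]{quasihomogeneous} shows that $T$ is similar to the direct sum $T_0\oplus T_1\oplus\cdots\oplus T_{n-1}$, where each $T_i$ is the homogeneous operator $M_z^*$ on $\hl^{(\lambda_i)}$. It then remains only to check that this direct sum is itself homogeneous. Since $\sigma(T_i)=\bar{\D}$ for every $i$, the spectrum of the direct sum equals $\bar{\D}$, and for any $\varphi\in\mb$ the holomorphic functional calculus acts blockwise, so $\varphi(T_0\oplus\cdots\oplus T_{n-1})=\varphi(T_0)\oplus\cdots\oplus\varphi(T_{n-1})$. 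Homogeneity of each $T_i$ furnishes a unitary $U_i$ with $\varphi(T_i)=U_iT_iU_i^*$; taking $U=U_0\oplus\cdots\oplus U_{n-1}$ then gives $\varphi(T_0\oplus\cdots\oplus T_{n-1})=U(T_0\oplus\cdots\oplus T_{n-1})U^*$, which is the required unitary equivalence, so $T$ is similar to a homogeneous operator.

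The only point meriting care is the compatibility of the functional calculus with the block decomposition; this follows from the Riesz functional calculus, because any function holomorphic on a neighbourhood of $\bar{\D}=\sigma(\bigoplus_i T_i)$ is in particular holomorphic on a neighbourhood of each $\sigma(T_i)$. I do not expect a genuine obstacle: the corollary is essentially a repackaging of Theorem \ref{thmmobqu} together with the cited similarity result, the one new ingredient being the direct-sum homogeneity argument above.
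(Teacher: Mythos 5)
Your proposal is correct and follows essentially the same route as the paper: Theorem \ref{thmmobqu} gives $\Lambda(T)\geq 2$, the cited similarity result \cite[Theorem 4.2 (1)]{quasihomogeneous} reduces to the direct sum $T_0\oplus\cdots\oplus T_{n-1}$, and the converse is the observation \eqref{eqnsimtohomo}. The only difference is that you spell out the (correct) verification that a direct sum of homogeneous operators is homogeneous, a step the paper leaves implicit.
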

\begin{proof}The proof in the forward direction is exactly the same as the proof given in the previous corollary.  In the other direction, an operator similar to a homogeneous operator is clearly M\"{o}bius bounded. 
\end{proof}

The corollary given below follows immediately from Proposition \ref{thmweakhominFB_2}.
\begin{corollary}
Every quasi-homogeneous operator $T$ of rank $2$ is weakly homogeneous.
\end{corollary}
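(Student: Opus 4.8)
The plan is to recognize an arbitrary rank $2$ quasi-homogeneous operator as one of the operators already handled in Proposition \ref{thmweakhominFB_2}, at which point weak homogeneity is immediate. Fix positive reals $\lambda_0\le\lambda_1$ with $\lambda_1-\lambda_0=\Lambda(T)$, so that, by the definition \eqref{eqnquasihomo} specialized to $n=2$, such an operator has the form
$$T=\begin{pmatrix} T_0 & S_{0,1}\\ 0 & T_1\end{pmatrix}\quad\mbox{on}\quad \mathcal H^{(\lambda_0)}\oplus\mathcal H^{(\lambda_1)},$$
where $T_i=M_z^*$ on $\mathcal H^{(\lambda_i)}$ and $S_{0,1}$ is the operator determined on the total set $\{K^{(\lambda_1)}(\cdot,w):w\in\mathbb D\}$ by $S_{0,1}\big(K^{(\lambda_1)}(\cdot,w)\big)=m_{0,1}K^{(\lambda_0)}(\cdot,w)$. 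Here I would first note that the derivative order $\bar\partial^{(j-i-1)}$ in the definition of $S_{i,j}$ reduces to $\bar\partial^{(0)}$ (the identity) when $(i,j)=(0,1)$, so no derivative is present in the rank $2$ case; moreover $m_{0,1}\ne 0$, since membership of $T$ in $\mathcal FB_2(\D)$ forces $S_{0,1}\ne 0$.

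The key step is to identify $S_{0,1}$ with $M_\psi^*$ for a constant multiplier $\psi$. Taking $\psi$ to be the constant function $\overline{m}_{0,1}$, the reproducing-kernel computation of the same type as \eqref{eqnwtcomkernelfns} gives $M_\psi^*K^{(\lambda_1)}(\cdot,w)=\overline{\psi(w)}\,K^{(\lambda_0)}(\cdot,w)=m_{0,1}K^{(\lambda_0)}(\cdot,w)$, which matches the action of $S_{0,1}$ on every kernel function. Since these kernel functions span a dense subspace, I conclude $S_{0,1}=M_\psi^*$. Thus $T$ is exactly the operator appearing in Proposition \ref{thmweakhominFB_2} with the parameters $\lambda=\lambda_0$, $\mu=\lambda_1$, and $\psi\equiv\overline{m}_{0,1}$.

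It then remains to verify the two hypotheses of that proposition, both of which are automatic for a non-zero constant $\psi$. Because $\lambda_0\le\lambda_1$ we have $\mathcal H^{(\lambda_0)}\subseteq\mathcal H^{(\lambda_1)}$, so the constant $\psi$ lies in ${\rm Mult}(\mathcal H^{(\lambda_0)},\mathcal H^{(\lambda_1)})$; and since $\psi$ is a non-zero constant, $M_\psi$ is just a non-zero scalar multiple of the identity on each of $\mathcal H^{(\lambda_0)}$ and $\mathcal H^{(\lambda_1)}$, hence bounded and invertible on both. Proposition \ref{thmweakhominFB_2} therefore applies and yields that $T$ is weakly homogeneous. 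I do not expect any genuine obstacle: the only nontrivial hypothesis of Proposition \ref{thmweakhominFB_2} in general—the invertibility of $M_\psi$—holds trivially here precisely because the off-diagonal entry of a rank $2$ quasi-homogeneous operator is the adjoint of multiplication by a constant, and the sole point meriting care is the bookkeeping in the previous paragraph that $S_{0,1}=M_\psi^*$ with $\psi$ constant.
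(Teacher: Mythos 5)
Your proposal is correct and follows essentially the same route as the paper: the paper also obtains this corollary as an immediate consequence of Proposition \ref{thmweakhominFB_2}, using exactly the identification (made explicitly in the proof of Theorem \ref{thmmobqu}) that the off-diagonal entry $S_{0,1}$ of a rank $2$ quasi-homogeneous operator equals $M_\psi^*$ for the constant function $\psi\equiv\overbar{m}_{0,1}$, whence $M_\psi$ is a non-zero scalar multiple of the identity and the proposition applies. Your additional bookkeeping (density of kernel functions, $m_{0,1}\neq 0$ from the $\mathcal FB_2(\D)$ requirement, and the inclusion $\mathcal H^{(\lambda_0)}\subseteq\mathcal H^{(\lambda_1)}$ giving the multiplier condition) just spells out what the paper leaves implicit.
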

\section{A M\"obius bounded weakly homogeneous operator which is\\ not similar to any homogeneous operator}\label{secexamplehomo}
In this section, we provide a class of examples to show that a
M\"{o}bius bounded weakly homogeneous operator need not be similar to any homogeneous operator. The lemma given below is undoubtedly well-known (for a proof, see \cite[Corollary 2.37]{pick-int}). 
\begin{lemma}\label{lembounded}
Let  $K:\D \times \D \to \mathbb C$ be a positive definite kernel, and let $f:\D \to \mathbb C$ be an arbitrary 
holomorphic function. Then the operator $M_f$ of multiplication by $f$  on $(\mathcal H, K)$ is bounded if and only if 
there exists 
a  $c >0$ such that $\big (c^2 -f(z)\overbar{f(w)} \big) K(z,w)$ is non-negative definite on $\D\times \D$. 
In case $M_f$ is bounded, $\|M_f\|$ is the infimum of all
$c>0$ such that $\big (c^2 -f(z)\overbar{f(w)} \big ) K(z,w)$ is non-negative definite.
\end{lemma}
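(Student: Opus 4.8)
The plan is to convert the operator bound $\norm{M_f}\le c$ into a statement about the scalar kernel $\big(c^2-f(z)\overbar{f(w)}\big)K(z,w)$ by testing the relevant operator inequality on the (dense) span of kernel functions. The one computation everything rests on is the eigenvector relation for $M_f^*$: assuming $M_f$ is bounded, the reproducing property gives, for every $h\in(\mathcal H,K)$ and $w\in\D$,
\begin{equation*}
\langle h,M_f^*K(\cdot,w)\rangle=\langle M_fh,K(\cdot,w)\rangle=f(w)h(w)=\langle h,\overbar{f(w)}K(\cdot,w)\rangle,
\end{equation*}
so that $M_f^*K(\cdot,w)=\overbar{f(w)}K(\cdot,w)$. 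Hence for a finite combination $x=\sum_{j}\eta_jK(\cdot,w_j)$ I would compute
\begin{equation*}
\langle(c^2I-M_fM_f^*)x,x\rangle=\sum_{i,j}\overbar{\eta_i}\eta_j\big(c^2-f(w_i)\overbar{f(w_j)}\big)K(w_i,w_j),
\end{equation*}
which is exactly the quadratic form attached to $\big(c^2-f(z)\overbar{f(w)}\big)K(z,w)$ at the points $w_1,\dots,w_l$. This identity makes the forward direction immediate: if $M_f$ is bounded, then with $c=\norm{M_f}$ one has $c^2I-M_fM_f^*\ge0$, so the displayed quadratic form is non-negative for every choice of points and scalars, i.e.\ the kernel is non-negative definite.

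For the converse I would not assume $M_f$ bounded. Given $c>0$ such that $\big(c^2-f(z)\overbar{f(w)}\big)K(z,w)$ is non-negative definite, define a linear map $B$ on the dense span of $\{K(\cdot,w):w\in\D\}$ by $BK(\cdot,w)=\overbar{f(w)}K(\cdot,w)$, extended linearly. The non-negative definiteness is precisely the inequality $\norm{Bx}^2\le c^2\norm{x}^2$ on this span; in particular $\norm{x}=0$ forces $\norm{Bx}=0$, so $B$ is well defined, and it extends to a bounded operator on $(\mathcal H,K)$ with $\norm{B}\le c$. Taking the adjoint and using the reproducing property once more,
\begin{equation*}
(B^*h)(w)=\langle B^*h,K(\cdot,w)\rangle=\langle h,BK(\cdot,w)\rangle=\langle h,\overbar{f(w)}K(\cdot,w)\rangle=f(w)h(w),
\end{equation*}
for all $h$ and $w$, so $M_f=B^*$; this shows simultaneously that $M_f$ maps $(\mathcal H,K)$ into itself and that it is bounded with $\norm{M_f}=\norm{B}\le c$.

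Finally, for the norm formula I would combine the two halves: the forward computation shows every $c\ge\norm{M_f}$ is admissible (since $c^2I-M_fM_f^*\ge0$ whenever $c\ge\norm{M_f}$), while the converse shows every admissible $c$ satisfies $\norm{M_f}\le c$. Thus the set of admissible $c$ is exactly $[\norm{M_f},\infty)$, and its infimum is $\norm{M_f}$. The only real subtlety, and so the main obstacle, is in the converse direction: one must produce the operator $M_f$ itself rather than merely verify a bound, and the clean route is to build the bounded auxiliary operator $B$ on kernel functions and recover $M_f$ as its adjoint.
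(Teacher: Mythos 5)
Your proof is correct and is exactly the standard argument for this well-known multiplier criterion: the paper itself gives no proof, deferring to Agler--McCarthy (Corollary 2.37), whose proof proceeds the same way, via the eigenvector relation $M_f^*K(\cdot,w)=\overbar{f(w)}K(\cdot,w)$ and the identification of the quadratic form of $c^2I-M_fM_f^*$ on the span of kernel functions, with the converse obtained by building the bounded operator on kernel functions and taking adjoints. Nothing further is needed.
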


The following lemma, which will be used in the proof of the main theorem of this section, provides a sufficient condition on $K$ to determine if the multiplication operator $M_z$ on $(\hl, K)$ is M\"obius bounded.
\begin{lemma}\label{lemexample}
Let $K:\mathbb D\times \mathbb D\to\mathbb C$ be a 
positive definite kernel. Suppose that 
$K$ can written as the product $K^{(\lambda)}\tilde{K}$, where $\lambda > 0$ and $\tilde{K}$ is some non-negative definite kernel on $\D\times \D$. Then the multiplication operator $M_z$ on 
$\hk$ is bounded and $\sigma(M_z)=\bar{\mathbb D}.$ 
Moreover, $M_z$ on $\hk$ is M\"obius bounded.
\end{lemma}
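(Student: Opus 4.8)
The plan is to transfer everything from the model space $\hll$, on which $M_z$ is a homogeneous (hence M\"obius bounded) operator, to $\hk$ by exploiting the factorization $K=K^{(\lambda)}\tk$ together with Lemma \ref{lembounded} and the Schur product theorem (the pointwise product of two non-negative definite kernels is again non-negative definite). The single device I would use repeatedly is the following comparison: if $f\in{\rm Hol}(\D)$ is such that $M_f$ is bounded on $\hll$, then setting $c:=\|M_f\|_{\hll}$, Lemma \ref{lembounded} says that $\big(c^2-f(z)\overline{f(w)}\big)K^{(\lambda)}(z,w)$ is non-negative definite; since $\tk$ is non-negative definite, the pointwise product
$$\big(c^2-f(z)\overline{f(w)}\big)K^{(\lambda)}(z,w)\,\tk(z,w)=\big(c^2-f(z)\overline{f(w)}\big)K(z,w)$$
is again non-negative definite, so Lemma \ref{lembounded} gives that $M_f$ is bounded on $\hk$ with $\|M_f\|_{\hk}\le\|M_f\|_{\hll}$. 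Thus multiplier norms on $\hk$ are dominated by the corresponding multiplier norms on the model space $\hll$, with no hypothesis on $\tk$ beyond non-negative definiteness.

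First I would settle boundedness and the spectrum. Applying the comparison with $f(z)=z$ shows that $M_z$ is bounded on $\hk$. For the spectrum, the reproducing property gives $M_z^*K(\cdot,w)=\bar w\,K(\cdot,w)$ with $K(\cdot,w)\neq 0$ for every $w\in\D$, so every $\bar w$, $w\in\D$, is an eigenvalue of $M_z^*$; since $\sigma(M_z^*)=\{\bar\zeta:\zeta\in\sigma(M_z)\}$, this forces $\D\subseteq\sigma(M_z)$, and closedness of the spectrum gives $\bar{\D}\subseteq\sigma(M_z)$. For the reverse inclusion I would apply the comparison with $f(z)=z^n$ to obtain $\|M_z^n\|_{\hk}\le\|M_z^n\|_{\hll}$ for every $n$, whence
$$r(M_z)_{\hk}=\lim_{n\to\infty}\|M_z^n\|_{\hk}^{1/n}\le\lim_{n\to\infty}\|M_z^n\|_{\hll}^{1/n}=r(M_z)_{\hll}=1,$$
using that $M_z$ on $\hll$ has spectrum $\bar{\D}$. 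Hence $\sigma(M_z)\subseteq\bar{\D}$, and combined with the lower bound, $\sigma(M_z)=\bar{\D}$.

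Finally, for M\"obius boundedness: since $\sigma(M_z)=\bar{\D}$ and each $\varphi\in\mb$ is holomorphic in a neighbourhood of $\bar{\D}$, the functional calculus gives $\varphi(M_z)=M_{\varphi}$ (and in particular $M_\varphi$ is bounded on $\hll$ with $\|M_\varphi\|_{\hll}=\|\varphi(M_z)\|_{\hll}$), so it suffices to bound $\|M_{\varphi}\|_{\hk}$ uniformly in $\varphi$. Applying the comparison with $f=\varphi$ gives $\|\varphi(M_z)\|_{\hk}=\|M_{\varphi}\|_{\hk}\le\|M_{\varphi}\|_{\hll}=\|\varphi(M_z)\|_{\hll}$. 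Now $M_z$ on $\hll$ is homogeneous, so $\varphi(M_z)$ is unitarily equivalent to $M_z$ and therefore $\|\varphi(M_z)\|_{\hll}=\|M_z\|_{\hll}$ for every $\varphi\in\mb$. Consequently
$$\sup_{\varphi\in\mb}\|\varphi(M_z)\|_{\hk}\le\|M_z\|_{\hll}<\infty,$$
which is exactly M\"obius boundedness of $M_z$ on $\hk$.

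The step I expect to carry the real weight is the uniform control over the whole group $\mb$: a priori $\|M_{\varphi}\|_{\hll}$ could grow as $\varphi$ ranges over $\mb$, and it is precisely the homogeneity of $M_z$ on the model space $\hll$ that collapses this to the single finite constant $\|M_z\|_{\hll}$. The Schur product theorem is the mechanism that carries the non-negative definiteness witnessing each multiplier bound on $\hll$ through multiplication by the arbitrary factor $\tk$, and this is what makes the argument go through without requiring, for instance, that $M_z$ be bounded on $(\hl,\tk)$.
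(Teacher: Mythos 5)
Your proposal is correct and follows essentially the same route as the paper: the Schur-product comparison (Lemma \ref{lembounded} applied on $\hll$, then multiplied by $\tilde K$) to dominate multiplier norms on $\hk$ by those on $\hll$, the eigenvector argument $M_z^*K(\cdot,w)=\bar w K(\cdot,w)$ for $\bar{\D}\subseteq\sigma(M_z)$, and the homogeneity (hence M\"obius boundedness) of $M_z$ on $\hll$ for the uniform bound over $\mb$. The only cosmetic difference is the inclusion $\sigma(M_z)\subseteq\bar{\D}$: the paper transfers boundedness of $M_{(z-a)^{-1}}$, $a\notin\bar{\D}$, from $\hll$ to $\hk$ by the very same comparison device, whereas you compare $\|M_z^n\|$ and invoke the spectral radius formula; both are instances of the identical mechanism.
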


\begin{proof}
Let $M^{(\lambda)}$ denote the multiplication operator $M_z$ on  $\mathcal H^{(\lambda)}$.
Since $M^{(\lambda)}$ is M\"obius bounded, by Lemma \ref{lembounded}, there exists a constant $c>0$ such that $(c^2-\varphi(z)\overbar{\varphi(w)})K^{(\lambda)}(z,w)$ is non-negative definite on $\mathbb D\times \mathbb D$ for all $\varphi$ in $\mb$. Hence $(c^2-\varphi(z)\overbar{\varphi(w)})K(z,w)$, being a product of two non-negative definite kernels $(c^2-\varphi(z)\overbar{\varphi(w)})K^{(\lambda)}(z,w)$ and $\tilde{K}(z,w)$, is non-negative definite. Therefore, again by 
Lemma \ref{lembounded}, it follows that $\|M_{\varphi}\|_{(\hl,K)}\leq c$ for all $\varphi \in \mb$. 

To show that the spectrum of $M_z$ on $(\hl, K)$ is $\bar{\mathbb D}$, let $a$ be an arbitrary point in $\mathbb C\setminus \bar{\D}$. Since $\sigma(M^{(\lambda)})=\bar{\mathbb D}$, the operator $M_{z-a}$ is invertible on $\mathcal H^{(\lambda)}$. Consequently, the operator $M_{(z-a)^{-1}}$ is bounded on $\mathcal H^{(\lambda)}$. Then, by the same argument used in the previous paragraph, it follows that  $M_{(z-a)^{-1}}$ is bounded on $(\mathcal H,K)$, and therefore $a\notin \sigma(M_z)$. Also, since each $K(\cdot,w)$, $w\in \D$, is an eigenvector of $M_z^*$ on $(\hl, K)$, it follows that $\bar{\mathbb D}\subseteq \sigma(M_z)$. Therefore we conclude that $\sigma(M_z)=\bar{\mathbb D}$. 

Since $\sigma(M_z)=\bar{D}$ and  $\|M_{\varphi}\|_{(\hl,K)}\leq c$ for all $\varphi \in \mb$, it follows that $M_z$ on $(\hl, K)$ is M\"obius bounded.
\end{proof}
The theorem given below answers Question \ref{quesweakhomo} in the negative.
\begin{theorem}\label{thmexample}
Let $K(z,w)=\sum_{n=0}^\infty b_n(z\bar{w})^n$, $b_n>0$, be a positive definite kernel on $\D\times \D$ such that for each $\gamma\in \mathbb R$, $\lim_{|z|\to 1}(1-|z|^2)^{\gamma}K(z,z)$ is either $0$ or $\infty$.  Assume that the adjoint $M_z^*$ of the multiplication operator by the coordinate function $z$ on $(\hl, K)$  is in $B_1(\mathbb D)$ and is weakly homogeneous. Then  the multiplication operator $M_z$ on $(\mathcal H,K K^{(\lambda)})$, $\lambda > 0$,  is a M\"obius bounded weakly homogeneous operator which is not similar to any homogeneous operator.
\end{theorem}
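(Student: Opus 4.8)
The plan is to establish the three asserted properties in turn, the last being the heart of the matter. Observe first that $KK^{(\lambda)}$ is positive definite (being a product of positive definite kernels) and is of the form $K^{(\lambda)}\tilde K$ with $\tilde K=K$ non-negative definite, so Lemma \ref{lemexample} at once gives that $M_z$ on $\big(\hl,KK^{(\lambda)}\big)$ is bounded, that $\sigma(M_z)=\bar\D$, and that it is M\"obius bounded. Writing $KK^{(\lambda)}(z,w)=\sum_{n\geq 0}c_n(z\bar w)^n$ with $c_n=\sum_{j=0}^n b_j\frac{(\lambda)_{n-j}}{(n-j)!}>0$, this multiplication operator is a weighted shift, and exactly as in the proof of Corollary \ref{cormobinv} (computing $r_1(M_z)=r(M_z)=1$ and applying Seddighi's theorem) its adjoint $M_z^*$ lies in $B_1(\D)$.

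For weak homogeneity I would invoke Theorem \ref{thmweakhom} with $n=0$. Since $J_0(K_1,K_2)_{|\rm res\,\Delta}(z,w)=K_1(z,w)K_2(z,w)$, the choice $K_1=K$, $K_2=K^{(\lambda)}$ reproduces exactly the kernel $KK^{(\lambda)}$. Both kernels are sharp ($K^{(\lambda)}$ is sharp, and $K$ is sharp because $M_z^*$ on $\hk$ is in $B_1(\D)$, so its eigenspaces are one-dimensional and spanned by the kernel functions), and the associated operators $M_z$ are weakly homogeneous ($M_z$ on $\hk$ since $M_z^*$ is, and $M_z$ on $\hll$ by Lemma \ref{lemmobinv} and Proposition \ref{propweakhomo}). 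Theorem \ref{thmweakhom} then delivers the weak homogeneity of $M_z$ on $\big(\hl,KK^{(\lambda)}\big)$.

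The crux is the non-similarity claim, which I would prove by contradiction. If $M_z$ on $\big(\hl,KK^{(\lambda)}\big)$ were similar to a homogeneous operator, then so would be $M_z^*$ (the adjoint of a homogeneous operator is homogeneous, since $\varphi(T^*)=\psi(T)^*$ with $\psi(z)=\overline{\varphi(\bar z)}\in\mb$). As $B_1(\D)$ is closed under similarity and every homogeneous operator in $B_1(\D)$ is unitarily equivalent to $M_z^*$ on $\hlm$ for some $\mu>0$, we would obtain that $M_z^*$ on $\big(\hl,KK^{(\lambda)}\big)$ is similar to $M_z^*$ on $\hlm$. By \cite[Theorem $2^\prime$]{Shields} this forces $\|z^n\|^2_{(\hl,KK^{(\lambda)})}\sim\|z^n\|^2_{\hlm}$, that is, $c_n\sim (n+1)^{\mu-1}$ in view of \eqref{eqnlambdnu}.

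It remains to convert this comparison of coefficients into one of diagonal values and extract the contradiction. Because all coefficients are positive, $c_n\sim(n+1)^{\mu-1}$ gives, term by term, $KK^{(\lambda)}(z,z)=\sum_n c_n|z|^{2n}\sim\sum_n(n+1)^{\mu-1}|z|^{2n}=K_{(\mu-1)}(z,z)$; and since the coefficients of $K_{(\mu-1)}$ and of $K^{(\mu)}$ are comparable (again \eqref{eqnlambdnu}), $K_{(\mu-1)}(z,z)\sim K^{(\mu)}(z,z)=(1-|z|^2)^{-\mu}$. Combined with $KK^{(\lambda)}(z,z)=K(z,z)(1-|z|^2)^{-\lambda}$ this yields $K(z,z)\sim(1-|z|^2)^{\lambda-\mu}$, so that $(1-|z|^2)^{\mu-\lambda}K(z,z)$ stays bounded between two positive constants as $|z|\to 1$. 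This contradicts the standing hypothesis that $\lim_{|z|\to 1}(1-|z|^2)^\gamma K(z,z)\in\{0,\infty\}$, applied with $\gamma=\mu-\lambda$, and the proof is complete. I expect this final passage---transferring the forced regularity of the weight sequence to the diagonal growth of $K$ and colliding it with the irregular-growth hypothesis---to be the main obstacle; everything else is assembled from the results already in hand.
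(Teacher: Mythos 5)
Your proof follows the paper's own route almost step for step: Lemma \ref{lemexample} for the spectrum and the M\"obius boundedness, Theorem \ref{thmweakhom} (the $n=0$, i.e.\ product-kernel, case of the jet construction) for weak homogeneity, and then the contradiction obtained by placing everything in $B_1(\D)$, classifying homogeneous operators there, applying \cite[Theorem $2^\prime$]{Shields}, and colliding the resulting two-sided bound on $(1-|z|^2)^{\mu-\lambda}K(z,z)$ with the growth hypothesis. Your justification of sharpness of $K$ from $B_1(\D)$ membership, and your passage from comparability of moments to comparability of diagonal values, are both correct and only cosmetically different from the paper, which quotes Shields' theorem directly in the form $c_1\leq K(z,z)K^{(\lambda)}(z,z)/K^{(\mu)}(z,z)\leq c_2$.

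The one genuine gap is the step where you claim that $M_z^*$ on $(\hl, KK^{(\lambda)})$ lies in $B_1(\D)$ ``exactly as in the proof of Corollary \ref{cormobinv}'', by computing $r_1(M_z)=r(M_z)=1$ and citing Seddighi \cite{Seddighi}. In that corollary the moments are the explicit sequence $(n+1)^{-\gamma}$, and $r_1=r=1$ is a one-line computation. Here the moments are $1/c_n$ with $c_n=\sum_{j=0}^n b_j\,(\lambda)_{n-j}/(n-j)!$, a convolution against the \emph{unknown} sequence $\{b_n\}$, and the statement $r_1(M_z)=1$ amounts to the assertion that $\sup_j c_{n+j}/c_j$ grows subexponentially in $n$; nothing in the hypotheses makes this an ``easy'' verification. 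It is true, but proving it requires real work: from weak homogeneity one gets $\sigma(M_z)=\bar{\D}$ on $(\hl,K)$; from $B_1(\D)$ membership, $M_z-w$ is bounded below for $w\in\D$, so the approximate point spectrum of that shift misses $\D$, and Shields' description of approximate point spectra of weighted shifts (the annulus $r_1\leq|z|\leq r$) then gives that $\sup_j b_{n+j}/b_j$ is subexponential; finally a convolution estimate transfers this to $\{c_n\}$. None of this is a repetition of Corollary \ref{cormobinv}. The paper avoids the issue entirely by invoking \cite[Theorem 2.6]{Sal}: if $M_z^*$ belongs to $B_1(\D)$ on $(\hl,K_1)$ and on $(\hl,K_2)$, then it belongs to $B_1(\D)$ on $(\hl,K_1K_2)$. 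Since the hypothesis that $M_z^*$ on $(\hl,K)$ is in $B_1(\D)$ is available to you (you use it only for sharpness), citing Salinas' theorem is the efficient repair of this step; otherwise you must supply the spectral/convolution argument sketched above.
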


\begin{proof}
Since the operator $M_z^*$ on $(\hl, K)$ is weakly homogeneous, so is the operator $M_z$ on $(\hl, K)$. Furthermore, 
note that   $M_z$ on $(\mathcal H, K^{(\lambda)})$ is homogeneous,
and both the kernels $K$ and $K^{(\lambda)}$ are sharp. Therefore, 
 by Theorem $ \ref{thmweakhom}$, it follows that $M_z$ on
$(\mathcal H, K K^{(\lambda)})$ is weakly homogeneous. 
Also, by Lemma \ref{lemexample}, we see that $M_z$ on $(\mathcal H, K K^{(\lambda)})$ has spectrum $\bar{\mathbb D}$ and is M\"obius bounded. Therefore, to complete the proof, it remains to show that $M_z$ on $(\mathcal H,K K^{(\lambda)})$ is not similar to any homogeneous operator.

Suppose that $M_z$ on $(\mathcal H, K K^{(\lambda)})$ is similar to a homogeneous operator, say 
$T$. Since the operators $M_z^*$ on $(\mathcal H, K^{(\lambda)})$ and $M_z^*$ on $(\hl, K)$ belong to $B_1(\mathbb D)$, by \cite[Theorem 2.6]{Sal}, the operator $M_z^*$ on $(\mathcal H, K K^{(\lambda)})$ belongs to 
$B_1(\mathbb D)$. Furthermore, since the class $B_1(\mathbb D)$ is closed under similarity, it follows that the operator 
$T^*$ belongs to $B_1(\mathbb D)$. 
Also since $T$ is homogeneous, the operator $T^*$ is homogeneous. 
By the argument used in Corollary \ref{cormobinv}, it follows that $M_z$ on $(\mathcal H, K K^{(\lambda)})$ is similar to $M_z$ on $(\mathcal H, K^{(\mu)})$ for some $\mu>0$. Hence, by \cite[Theorem $2^\prime$]{Shields}, there exist constants $ c_1, c_2>0 $ such that
$$c_1 \leq \frac{K(z,z) K^{(\lambda)}(z,z)}{K^{(\mu)}(z,z)}\leq c_2,\;\;z\in \mathbb D.$$ 
Equivalently,
$$ c_1\leq (1-|z|^2)^{\mu-\lambda}K(z,z)\leq c_2,  z\in \mathbb D.$$
This is a contradiction to our hypothesis that for each $\gamma\in \mathbb R$,  
$\lim_{|z|\to 1}(1-|z|^2)^{\gamma}K(z,z)$ is either $0$ or $\infty$. Hence the operator $M_z$ on $(\mathcal H, K K^{(\lambda)})$ can  not be similar to any homogeneous operator, completing the proof of the theorem.
\end{proof}

Below we give one example which satisfy the hypothesis of the Theorem \ref{thmexample}. Recall that the Dirichlet kernel $K_{(-1)}$ is defined by 
$$K_{(-1)}(z,w)=\sum_{n=0}^\infty \frac{1}{n+1}(z\bar{w})^n=\frac{1}{z\bar{w}} \log\frac{1}{1-z\bar{w}},\;\;z,w\in \D.$$
By corollary \ref{cormobinv}, the operator $M_z^*$ on $(\hl, K_{(-1)})$ is weakly homogeneous and belongs to $B_1(\D)$. 
Also, for an arbitrary real number $\gamma$, $ \lim_{|z|\to 1} \frac{(1-|z|^2)^{\gamma}}{|z|^2} \log \frac{1}{1-|z|^2} $ 
is either $ 0 $ or $ \infty $, which follows from the following identity:
\begin{numcases}{ \lim_{x\to 0} {-x^{\gamma}\log x}=}
\infty & (if $\gamma\leq 0$)
\nonumber\\
0 & (if $\gamma>0$).
\nonumber
\end{numcases}
Consequently, we have the following corollary.
\begin{corollary}\label{corexam}
The multiplication operator $M_z$ on 
$(\mathcal H, K_{(-1)} K^{(\lambda)})$, $\lambda>0$,  
is a M\"obius bounded weakly homogeneous operator 
which is not similar to any homogeneous operator.
\end{corollary}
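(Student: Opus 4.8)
The plan is to derive this corollary as a direct specialization of Theorem \ref{thmexample} to the case where $K$ is the Dirichlet kernel $K_{(-1)}$. To apply that theorem I would simply verify, one by one, that $K_{(-1)}$ meets each of its three hypotheses, after which the conclusion is immediate.

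First I would record that $K_{(-1)}(z,w)=\sum_{n=0}^\infty \tfrac{1}{n+1}(z\bar w)^n$ is a positive definite kernel on $\D\times\D$ whose Taylor coefficients $b_n=\tfrac{1}{n+1}$ are strictly positive, so it has exactly the form demanded by Theorem \ref{thmexample}. Next I would check the diagonal growth condition: for every $\gamma\in\mathbb R$ the quantity $(1-|z|^2)^\gamma K_{(-1)}(z,z)$ must tend to $0$ or to $\infty$ as $|z|\to 1$. Using the closed form $K_{(-1)}(z,z)=\tfrac{1}{|z|^2}\log\tfrac{1}{1-|z|^2}$ and noting that the harmless factor $\tfrac{1}{|z|^2}$ tends to $1$, the problem reduces, after the substitution $x=1-|z|^2$, to the limit $\lim_{x\to 0^+}(-x^\gamma\log x)$. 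By the elementary identity displayed in the excerpt this limit equals $\infty$ when $\gamma\le 0$ and $0$ when $\gamma>0$, so the dichotomy holds for every $\gamma$.

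Finally I would invoke Corollary \ref{cormobinv} with $\gamma=-1$, which asserts that $M_z^*$ on $(\hl, K_{(-1)})$ is a weakly homogeneous operator belonging to $B_1(\D)$; this supplies the remaining hypothesis. With all three conditions of Theorem \ref{thmexample} verified for $K=K_{(-1)}$, that theorem yields at once that $M_z$ on $(\mathcal H, K_{(-1)}K^{(\lambda)})$, $\lambda>0$, is M\"obius bounded, weakly homogeneous, and not similar to any homogeneous operator, which is precisely the claim.

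Because every structural ingredient has already been established in the preceding results, there is no substantive obstacle in this argument; it is essentially a bookkeeping check that a concrete kernel satisfies an abstract hypothesis. The only point meriting attention is the diagonal limit computation, and even that is disposed of by the logarithmic identity quoted just before the corollary.
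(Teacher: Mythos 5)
Your proposal is correct and follows essentially the same route as the paper: both verify the hypotheses of Theorem \ref{thmexample} for $K=K_{(-1)}$ by citing Corollary \ref{cormobinv} for the $B_1(\D)$ and weak homogeneity conditions, and by using the closed form $K_{(-1)}(z,z)=\tfrac{1}{|z|^2}\log\tfrac{1}{1-|z|^2}$ together with the limit $\lim_{x\to 0^+}(-x^{\gamma}\log x)$, which is $\infty$ for $\gamma\leq 0$ and $0$ for $\gamma>0$, to establish the diagonal dichotomy. No gaps; the argument matches the paper's.
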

\section{Concluding remarks and open questions}
Suppose  $T\in B(\hl)$ is similar to a homogeneous operator $S$. Then it follows from \eqref{eqnsimtohomo} that 
$\varphi(T)=\Gamma(\varphi) T  {\Gamma(\varphi)}^{-1}$, where $\Gamma(\varphi) = XU_{\phi}X^{-1}$, $\varphi\in \mb$, and therefore the map $\Gamma: \mb\to B(\hl)$ is weakly uniformly bounded in the sense that 
$\sup_{\varphi\in \mb}\|\Gamma(\varphi)\|\|\Gamma(\varphi)^{-1}\|$
is finite.
Note that if $T$ is a weakly homogeneous operator such that the intertwining operator $\Gamma(\varphi)$ is weakly uniformly bounded, then $T$ is necessarily M\"obius bounded.
Therefore, Theorem \ref{thmexample} suggests the following question: If $T$ is a weakly homogeneous operator such that the intertwining operator $\Gamma(\varphi)$ is uniformly bounded, then does it follow that the operator  $T$ is necessarily similar to a homogeneous operator?

Moreover, if $T$ is similar to a homogeneous operator $S$ such that $\varphi(S)=U_\varphi S U_\varphi^*$ for some unitary representation $U$ of \mb, then the intertwining operators $\Gamma(\varphi)$, $\varphi\in\mb$, for $T$ are uniformly bounded in the sense that $\sup_{\varphi\in \mb}\|\Gamma(\varphi)\|$ is finite. Also, in this case, $\Gamma$ is a homomorphism from $\mb$ into $B(\hl)$. So, it is natural to ask the question: If $T$ is weakly homogeneous and the intertwiner $\Gamma$ is both a homomorphism and uniformly bounded, then does it follow that $T$ is similar to a homogeneous operator? This possibility was raised in \cite{onhomocontraction}.
These questions are also in the spirit of the three questions on similarity mentioned in \cite{pisier}.



\medskip \textbf{\textit{Acknowledgement}}.
The author would like to thank Prof. G. Misra for many fruitful discussions and
suggestions in the preparation of this paper. 

\end{document}